\newcommand{\ab}{\allowbreak}
\newcommand{\E}{\mathrm{E}}
\newcommand{\tr}{\mathrm{tr}}
\newcommand{\Tr}{\mathrm{Tr}}
\newcommand{\cm}{\ols{\hbox{{\mycal m}}}}
\newcommand{\cN}{\mathcal N}
\newcommand{\cP}{\mathcal P}
\newcommand{\bZ}{\mathbb{Z}}
\newcommand{\tg}{{\skew{2.0}\tilde\gamma}}
\newcommand{\hg}{{\skew{3.0}\hat\gamma}}
\newcommand{\tom}{{\tilde\omega}}
\DeclareMathOperator{\cov}{\mathrm{cov}}
\def\mycal{\fontfamily{ppl}\fontseries{m}%
\fontshape{it}\fontsize{10.25}{12}\selectfont}
\def\bR{\mathbb{R}}
\newcommand{\ds}{\displaystyle}
\newcommand{\ols}[1]{\mskip.5\thinmuskip
\overline{\mskip-.5\thinmuskip {#1} \mskip-.5
\thinmuskip}\mskip.5\thinmuskip}
\newcommand{\xx}[2]{\relax}
\newcommand{\thebottomline}{\renewcommand{\thefootnote}{}
  \renewcommand{\footnoterule}{}
  \phantom{M}\footnotetext{\tiny{}\hfill
    \textit{\noindent\romannumeral\day.%
\romannumeral\month.\expandafter\xx\romannumeral\year}}}
\def\ybox{\dimen0=\hsize \dimen1=30pt%
\divide\dimen0 by \dimen1
\count100=\dimen0 \dimen0=\hsize
\divide\dimen0 by \count100%
\dimen1=\dimen0\divide\dimen1 by 2
{\parindent0pt\hbox to
\hsize{\cleaders\vbox{\hsize\dimen0
\hrule\vrule  height1.2pt%
\advance\dimen1 by -0.8pt\hskip
\dimen1\advance\dimen1 by 1.2pt \vrule
width\dimen1\hrule}\hfill}}}
\newtheorem{theorem}{Theorem}
\newtheorem{lemma}[theorem]{Lemma}
\newtheorem{proposition}[theorem]{Proposition}
\newtheorem{corollary}[theorem]{Corollary}
\theoremstyle{definition}
\newtheorem{definition}[theorem]{Definition}
\newtheorem{notation}[theorem]{Notation}
\newtheorem{remark}[theorem]{Remark}
\DeclarePairedDelimiter{\abs}{\lvert}{\rvert}
\DeclarePairedDelimiter\floor{\lfloor}{\rfloor}
\begin{document}

\title[Asymptotic Infinitesimal Law of a real Wishart Matrix]
{The Asymptotic Infinitesimal Distribution\\[5pt] of a Real Wishart Random Matrix}

\author[mingo]{James A. Mingo} \address{Department
  of Mathematics and Statistics, Queen's University, Jeffery
  Hall, Kingston, Ontario, K7L 3N6, Canada}

\email{mingo@mast.queensu.ca} 

\thanks{Research supported by a Discovery Grant from
  the Natural Sciences and Engineering Research Council of
  Canada}

\author[Vazquez-Becerra]{Josue Vazquez-Becerra}
\address{Universidad Aut\'onoma Metropolitana, Iztapalapa
  Campus, C. P. 09340, Iztapalapa D.F., Mexico}

\email{13jdvb1@queensu.ca}

\begin{abstract}
Let $X_N$ be a $N \times N$ real Wishart random matrix with
aspect ratio $M/N$. The limit eigenvalue distribution of
$X_N$ is the Marchenko-Pastur law with parameter $c = \lim_N
M/N$. The limit moments $\{m_n\}_n$ are given by $m_n =
\sum_{\pi} c^{\#(\pi)}$ where the sum runs over $NC(n)$.
  
Let $m_n'$ be the limit of $N( \E(\tr(X_N^n)) - m_n)$.
These are the asymptotic infinitesimal moments of a real
Wishart matrix. We show that $m'_n$ can be written as a
sum over planar diagrams with two terms, $\sum_{\pi}
c'(\#(\pi) -1) c^{\#(\pi)-1}$, and $\sum_{\pi \in
  S_{NC}^\delta(n, -n)} c^{\#(\pi)/2}$, where
$S_{NC}^\delta(n, -n)$ is a set of non-crossing annular
permutations satisfying a symmetry condition. Moreover we present a recursion formula for the second term which is related to one for higher order freeness. 
\end{abstract}

\maketitle

\section{introduction and statement of main results}
\label{section:introduction}

In 1974 G.~'t Hooft \cite{h} considered gauge theory with colour gauge group $U(N)$ and quarks having a colour index from $1$ to $N$. He showed that in the limit $N \rightarrow \infty$ ``only planar diagrams with quarks at the edges dominate''. 
In 1991 D.~Voiculescu \cite{v} showed that, also in the limit $N \rightarrow \infty$, free independence described the behaviour of independent and unitarily invariant matrix ensembles. In 1994 R.~Speicher \cite{s} tied these together when he showed that free independence could be described with planar diagrams. 

We introduce a new class of planar diagrams that lie between the non-crossing partitions that connect moments to free cumulants and the annular diagrams used by Nica and Mingo \cite{mn} to describe the global fluctuations of random matrices. We show that these diagrams describe the infinitesimal laws for some orthogonally invariant ensembles.

Given a random matrix ensemble $\{ X_N \}_{N=1}^\infty$
where $X_N$ is $N \times N$ self-adjoint random matrix, we
say the ensemble has a \textit{limit distribution} (or a
limit eigenvalue distribution) if the random probability
measure $\mu_N$ on $\bR$ which has a mass of weight $1/N$ at
each eigenvalue of $X_N$ converges, as $N \rightarrow
\infty$, to a non-random probability measure, $\mu$. We will
study the averaged moments $\E( \int t^n \, d\mu_N(t)) =
\E(\tr( X_N^n ))$. The existence of a limit law comes to
showing that for each $n \geq 1$ we have
\[
\lim_N \E( \tr( X_N^n )) = \int t^n \, d\mu(t). 
\]
We let $m_n(N) = \E( \tr( X_N^n ))$ and $m_n = \int t^n \,
d\mu(t)$. If we have as above that $m_n = \lim_N m_n(N)$,
then we can consider the difference quotient
\[
m'_n(N) = \frac{m_n(N) - m_n}{N^{-1} - 0}.
\]
If the limit $\lim_{N \rightarrow \infty} m_n'(N)$ exists
for each $n$ we say that the ensemble $\{ X_N\}_N$ has a
\textit{limit infinitesimal distribution} and denote the resulting
moments by $m_n' = \lim_N m'_n(N)$. The existence of
infinitesimal laws were demonstrated by Johansson \cite{j} (in the
Hermite case), Mingo and Nica \cite{mn} and Dumitriu and Edelman
\cite{dumitriu_edelman_2006} (in the Laguerre case).

In \cite{bs}, Belinschi Shlyakhtenko introduced infinitesimal freeness as stronger form of freeness for random variables. In \cite{fn} F\'evrier and Nica showed the connection of infinitesimal freeness to freeness of type B. In \cite{sh} Shlyakhtenko has shown how infinitesimal freeness can be used to analyze spike models in random matrix theory. This paper introduces a class of planar objects extending those in \cite{m} that can be used to analyze infinitesimal freeness when the matrices only have orthogonal invariance. 

For a real or complex Wishart matrix the limit distribution
is the Mar\-chen\-ko-Pastur law. This is a probability measure
$\mu_c$ on $[0, \infty)$, which depends on a shape parameter
  $c > 0$. The moments of $\mu_c$ are given by simple
  combinatorial formula
\[
m_n = \sum_{\mathclap{\pi \in NC(n)}}  c^{\#(\pi)}
\]
where $NC(n)$ is the set of non-crossing partitions of $[n]
= \{1, 2, 3, \dots, n\}$ and for a partition $\pi$,
$\#(\pi)$ denotes the number of blocks of $\pi$. This is
equivalent to showing that for each $n$, $\kappa_n = c$,
where $\kappa_n$ is the $n^{th}$ free cumulant of $\mu_c$,
see \cite[Def. 12.12 \& Ex. 22.18]{ns}.

If we suppose that $X_N$ is a complex Wishart random matrix,
$\lim_N M/N = c$, and in addition that $c' := \lim_N M - c
N$ exists then it shown in \cite[Cor. 9.4]{mn} that
\begin{equation}\label{eq:shape_term}
m'_n = \sum_{\mathclap{\pi \in NC(n)}} c' \#(\pi) c^{\#(\pi)-1}.
\end{equation}
This can also be expressed by saying that the infinitesimal
free cumulant $\kappa'_n = c'$ for all $n$. In addition in
\cite[Thm. 3.1]{m} a signed measure $\nu'_1$ depending on
$c$ and $c'$, was given so that $m'_n = \int t^n\,
d\nu'_1(t)$. In this paper we consider the case where $X_N$
is a real Wishart matrix. We find that when $\lim_N M/N =
c$, and $c' := \lim_N M - c N$ exists there is also a limit
infinitesimal distribution:
\begin{equation}\label{eq:two_terms}
m'_n = \sum_{\pi \in NC(n)} c' \#(\pi) c^{\#(\pi)-1}
+
\sum_{\pi \in S_{NC}^\delta(n, -n)} c^{\#(\pi)/2}
\end{equation}
where $S_{NC}^\delta(n, -n)$ is a new combinatorial object whose
study is one of the main objects of this paper. The first term,
let's call it the \textit{shape
  term}, does not appear in the tridiagonal model of
Dumitriu and Edelman \cite{dumitriu_edelman_2006}. The second term
is the same as was found in the tridiagonal model of Dumitriu 
and Edelman.

\begin{figure}
\begin{center}{\small
\begin{tabular}{l|ll}
$n$  & $m_n$ & $m'_n$ \\ \hline
1  & $c$ & $0$ \\
2  & $c + c^2$ & $c$ \\ 
3  & $c + 3 c^2 + c^3$ & $3 c + 3 c^2$ \\
4  & $c + 6c^2 + 6c^3 + c^4$ & $6 c + 17 c^2 + 6 c^3$ \\ 
5  & $c + 10c^2 + 20c^3 + 10c^4 + c^5$ & $10 c + 55 c^2 + 55 c^3 + 10 c^4$\\ 
6  & $c + 15 c^2 + 50 c^3 + 50 c^4 + 15 c^5 + c^4$ &  $15 c + 135 c^2 + 262 c^3 + 135 c^4 + 15 c^5$ \\ 
\end{tabular}}\end{center}
\caption{\small\label{table:the_first_six_moments}The first 6 moments and infinitesimal moments of the
Mar\-chenko-Pastur law, assuming $c' = 0$. The coefficient of $c^k$ in $m_n$ is $\frac{1}{n} \binom{n}{k-1} \binom{n}{k}$. The coefficient of $c^k$ in $m'_n$ is $\frac{1}{2} \big( \binom{2n}{2k} - \binom{n}{k}^2 \big)$. }
\end{figure}

If we look at the second term of equation (\ref{eq:two_terms}) 
and let 
\begin{equation}\label{eq:m_bar}
\ols{m}'_n = \sum_{\mathclap{\pi \in S_{NC}^\delta(n, -n)}}  c^{\#(\pi)/2}
\end{equation}
we can then show that these
moments $\{ \ols{m}'_n \}_n$ satisfy a recursion relation
\[
\ols{m}'_n  = (n-1)m_{n-1} + (1+c) \ols{m}'_{n-1} + 2\sum_{k=2}^{n-2} \ols{m}'_{k} m_{n-k-1}
\]
which is independent of $c'$ but depends on the moments $\{m_n \}_n$. See Figure \ref{table:the_first_six_moments} above for values of $m_n$  and $\ols{m}'_n$ for small values of $n$. 
In the case $c = 1$, this recursion relation has already appeared in
the work of Merlini, Sprugnoli, and Verri \cite{msv}  where $\ols{m}'_{n+1}$ is the sum of the areas under Dyck paths of length $2n$. The same recursion appeared in F\'eray \cite[Lemma 3.12]{f}, where $\ols{m}'_n$ appears in the $1/N$ expansion of the orthogonal Weingarten function as a coefficient of a subleading term. There is a simple relation connecting $S_{NC}^\delta(n, -n)$ to the orthogonal Weingarten calculus, but we prefer to leave this for a later discussion. We obtain our recursion from a simple `pinching' construction that is the same as that which gives the recursion for Catalan numbers; see Figure \ref{fig10:intermediate_pinch}.

One should also compare the recursion above with the one above Theorem 5.24 in \cite{cmss}
\begin{align*}
&(-1)\mu(1_{m+n},\gamma_{m,n})= m \cdot\mu(1_{m+n},\gamma_{m+n})\\
&\quad+\sum_{\mathclap{1\leq k\leq n-1}} 
\big\{ \mu(1_{m+k},\gamma_{m,k})
\mu(1_{n-k},\gamma_{n-k}) + \mu(1_{m+n-k},\gamma_{m,n-k}) \mu(1_k,
\gamma_{k}) \big\},
\end{align*}
where $\mu(1_n, \gamma_n)$ is the M\"obius function for one cycle with $n$ points and $\mu(1_{m+n},\gamma_{m+n})$ is the M\"obius function for $2$ cycles, one with $m$ points and one with $n$ points. Recently higher order generalizations have been found by Borot, Charbonnier, Garcia-Failde, Leid, and Shadrin in \cite{bcgls}.

We show that the Cauchy transform of $\{ m'_n\}_n$ is 
\begin{align*}\lefteqn{
g(z) = \sum_{n=0}^\infty \frac{m'_n}{z^{n+1}} } \\
& =
\frac{-c'}{z \sqrt{P(z)}}
\frac{(1 - c)^2 - (1 + c)z - (1 - c)\sqrt{P(z)}}{\sqrt{P(z)} + z -1 + c} \\
& \qquad\mbox{} + 
\frac{1}{2}
\Big\{\ 
\frac{1}{2}\Big\{ \frac{1}{z - a} + \frac{1}{z -b}\Big\}
-
\frac{1}{\sqrt{(z - a) (z - b)}} \ \Big\}.
\end{align*}

\begin{figure}
\begin{center}
\includegraphics{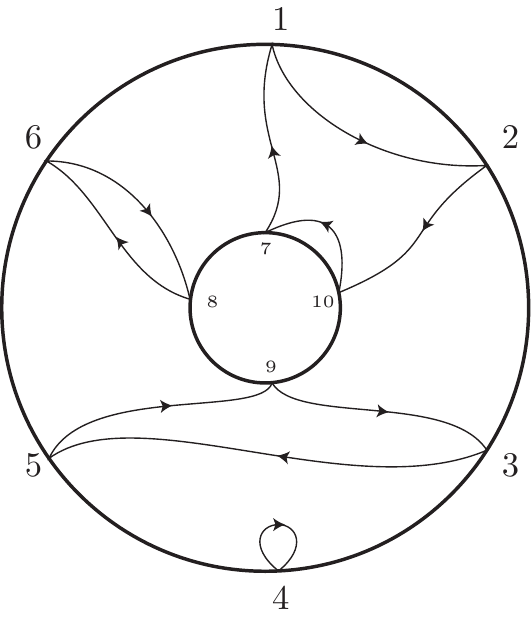}\hfill
\includegraphics{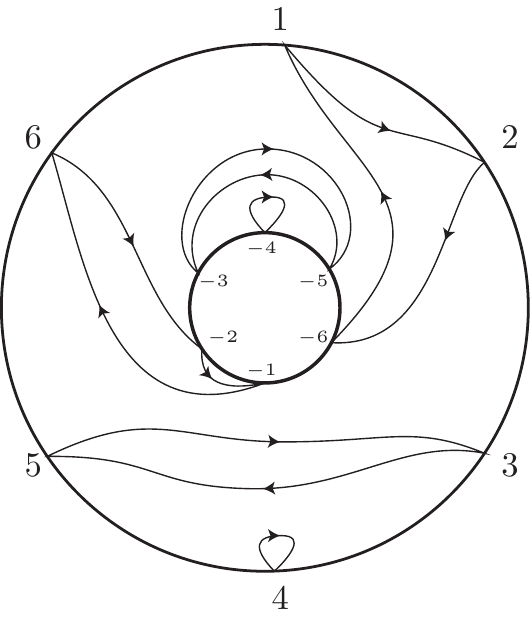}
\end{center}
\caption{\small\label{fig:non-crossing_annular} On the left we have a non-crossing permutation, $\pi$, of the $(6, 4)$-annulus. There are $6$ point on the outer circle and $4$ on the inner circle. The orientation of the inner circle is opposite to that of the outer circle. In cycle form $\pi = (1, 2, 10, 7)(3, 5, 9)(4)\ab(6, 8)$. The cycles of $\pi$ must fit between the two circles and are traversed in a clockwise direction without intersecting another cycle. On the right we have an element of $S_{NC}^\delta(6, -6)$. There are $6$ points on each circle and the circles have the same orientation. Each cycle appears with a mirror image where the order of the points is reversed and the signs are reversed. }
\end{figure}

To complete this introduction let us recall the relation between the fluctuation moments of Wishart matrices and non-crossing annular permutations, as the sum in equation (\ref{eq:m_bar}) is over a subset of the non-crossing annular permutations. Let us recall that $S_{NC}(p, q)$ denotes the set of non-crossing permutations of a $(p, q)$-annulus. These are permutations of $[p+q]$ such that the cycles can be drawn in an annulus, with $p$ points on the outer circle and $q$ points on the inner circle, in such a way that the cycles do not cross, see the left hand figure in Figure \ref{fig:non-crossing_annular}. See \cite[\S5.1]{ms} for a full definition and examples. In \cite{mn} we showed that for a complex Wishart random matrix, $X_N$,  where $c = \lim_N M/N$ we have
\begin{equation}\label{eq:complex_fluctuations}
\lim_M \cov( \Tr(X_N^p), \Tr(X_N^q))
=
\sum_{\mathclap{\pi \in S_{NC}(p, q)}}  c^{\#(\pi)}.
\end{equation}
In \cite[Remark 5.13]{r2} this was extended to the case of real Wishart matrices, provided that the right hand side of (\ref{eq:complex_fluctuations}) is multiplied by $2$. The factor of $2$ is necessary because in the real case we need both possible orientations of the inside circle. Indeed, for the fluctuation moments one  needs the orientation in Figure \ref{fig:non-crossing_annular} where the two circles have opposite orientations, in addition, we need the orientation where the two circles have the same orientation.  When working with the infinitesimal moments we shall \textit{only} need the case where the two circles have the same orientation. We let $[\pm n] = \{\pm 1, \pm, 2, \dots, \pm n\}$, $S_{\pm n}$ be the permutations of $[ \pm n]$ and $\delta \in S_{\pm n}$ be given by $\delta(k) = -k$. If $\sigma \in S_{\pm n}$ we say that $\sigma\delta$ is a pairing mean that all cycles of $\sigma \delta$ have $2$ elements. This, for elements of $S_{NC}^\delta(n, -n)$, is equivalent to requiring that $\delta\sigma\delta^{-1} = \sigma^{-1}$ and that no cycle of $\sigma$ can contain both $k$ and $- k$ for any $k \in [n]$. It also means that for elements of $S_{NC}^\delta(n, -n)$ the cycles always occur in pairs: $c$ and $c'$, where $c'$ is obtained from $c$ by reversing the order and flipping the sign of each entry; see Remark \ref{remark:product_of_pairings}.

The main combinatorial object of this paper will be
\[
S_{NC}^\delta(n, -n) = \{ \sigma \in S_{NC}(n, -n) \mid
\sigma\delta \textrm{ is a pairing }\}.
\]
See the left hand figure in Figure \ref{fig:half_permutations} for an example.

Notice also that the non-crossing permutations and partitions of type B, \cite{bgn} and \cite{no},  all have the property that $\delta \sigma = \sigma \delta$, so we have a different kind of symmetry. 

If we only consider elements of $S_{NC}^\delta(n, -n)$ with blocks of size 2, i.e. pairings then we get the set $NC_2^\delta(n, -n)$ which gave the infinitesimal moments of the GOE, see \cite{m}. Thus the results of this paper, those of F\'eray \cite{f}, and those of \cite{m} show that for orthogonally invariant ensembles, the combinatorics of $S_{NC}^\delta(n, -n)$ are what is needed to describe the corresponding infinitesimal laws.

\subsection*{Outline of the Paper}
After this introduction the sections of this paper are as follows. In Section \ref{sec:notation_preliminaries} we will set out the matrix model we will be using and establish the notation to be used in the rest of the paper. In Section \ref{section:expansion_of_the_trace} we will write the infinitesimal moments in terms of certain pairings. In Section \ref{section:from_pairings_to_permutations} we will show how this sum can be written a sum over $S_{NC}^\delta(n, -n)$. In Section \ref{section:independent_wishart_matrices} we will examine the multi-matrix case and show how this leads to a new kind of independence as found in \cite[Thm. 37]{m}. We are grateful to Alexandru Nica for suggesting this case. In Section  \ref{sec:first_recursion} we establish some preliminaries that will be necessary for Sections \ref{sec:the_subsets}, \ref{sec:the_recursion}, and \ref{sec:the_mp_recursion}. In Section  \ref{sec:the_subsets} we divide $S_{NC}^\delta(n, -n)$ into the three parts, $S_I$, $S_{\mathit{II}}$, and  $S_I$, $S_{\mathit{III}}$, which are necessary for the recursion in Sections  \ref{sec:the_recursion} and \ref{sec:the_mp_recursion}. In two brief concluding sections, \S \ref{sec:infinitesimal_r-transform} and \S \ref{sec:orthogonal_polynomials},  we make some comments on connecting our results with those in the recent paper of Arizmendi, Garza-Vargas, and Perales \cite{agp}.

\section{notation and preliminaries}\label{sec:notation_preliminaries}

In this section we shall describe the matrix model we are using and set up our notation for calculations in the symmetric group.

\begin{definition}\label{def:real_wishart}
$G = (g_{ij})_{ij}$ with $\{ g_{ij}\}_{ij}$ independent
  identically distributed $\cN(0, 1)$ random variables, and
  $1 \leq i \leq N$, $1 \leq j \leq M$.  Let $X_N =
  \frac{1}{N} G G^t$. This is what me mean by a \textit{real Wishart random matrix}. Note that $G$ depends on $M$ and $N$ but we shall suppress this dependency. 

When expanding powers of $X_N$ we shall get products of $G$ and $G^t$, the transpose of $G$. For convenience we shall adopt the following convention: $G^{(1)} = G$ and 
$G^{(-1)} = G^t$. 
\end{definition}

$S_{\pm n}$ is the symmetric group on $[\pm n] = \{\pm 1, \pm 2, \dots, \pm n\}$. We embed $S_n$ into $S_{\pm n}$ be making $\pi \in S_n$ act
trivially on $\{-1, -2, -3, \dots, -n\}$. We shall denote by $\gamma$ the permutation in $S_n$ with the long cycle $(1, 2, \dots, n)$. In this notation $\gamma$ depends on $n$; in case it isn't clear which $n$ is meant we shall write $\gamma_n$. 

$\cP(n)$ is the set of partitions of $[n]$; $\cP_2(2n)$ will denote the pairings of $[2n]$, i.e. a partition with all blocks of size $2$. Every pairing will be thought of as a permutation where each block of size $2$ becomes a transposition. Every permutation can be thought of as the partition whose blocks are the cycles of the permutation. $\#(\pi)$ is the number of cycles of the permutation $\pi$,
  for any $\pi$ and $\sigma$, $\#(\pi \sigma) = \#(\sigma
  \pi)$.

For permutations $\pi$ and $\sigma$ in $S_n$ such that the subgroup $\langle \pi , \sigma \rangle$ acts transitively on $[n]$ there is an integer $ g > 0$ (the genus of a certain surface) such that
\begin{equation}\label{eq:geodesic_equation}
\#(\pi) + \#(\pi^{-1}\sigma) + \#(\sigma) = n + 2(1 - g).
\end{equation}
When $ g = 0$ we say that $\pi$ is non-crossing relative to $\sigma$. When $\sigma = \gamma$, we say that $\pi$ is non-crossing and the set of such permutations is denoted $NC(n)$. See \cite[Lect.~9]{ns} for a full discussion. We shall also be interested in the case when $\sigma$ has two cycles. If $n = p + q$ and $\sigma = (1, 2, \dots, p)(p + 1, \dots, p+ q)$ and $\pi$ is non-crossing relative to $\sigma$ we say that $\pi$ is a non-crossing permutation of the $(p, q)$-annulus and denote by $S_{NC}(p, q)$ the set of such permutations. See \cite{mn} or \cite[Ch.~5]{ms} for further discussion. An example when $p = 6$ and $q = 4$ is shown on the left hand side of Figure \ref{fig:non-crossing_annular}.

The pairing $\delta \in S_{\pm n}$ given by $\delta(k) = -k$ will be central to all of our constructions. When $\sigma = \gamma \delta \gamma^{-1} \delta$ and $\pi \in S_{\pm n}$ is non-crossing relative to $\sigma$ we say that $\pi$ is a non-crossing annular permutation with the reversed orientation and denote by $S_{NC}(n, -n)$ the set of such permutations. The $-n$ is to indicate that the orientation has been reversed on the inner circle. Both the sets $S_{NC}(p, q)$ and $S_{NC}(p, -q)$ figured in the work of Redelmeier on real second order freeness \cite{r2}. 

The main focus of this paper will be on the subset $S_{NC}^\delta(n, -n)$ of $S_{NC}(n,\ab -n )$ consisting of those permutations $\pi$ that satisfy the symmetry condition that $\pi \delta$ is a pairing. Examples when $n = 6$ can be found on the right hand side of Figure \ref{fig:non-crossing_annular}, the left hand side of Figure \ref{fig10:intermediate_pinch}, the top left of Figure 
\ref{fig2:the_first_type}, the top left of Figure \ref{fig3:the_second_type} and the left hand side of Figure \ref{fig3:the_third_type}. 

If $\pi \in S_{NC}^\delta(n, -n)$ and all cycles have length $2$ then $\pi$ will be a non-crossing pairing and the set of such pairings is denoted $NC_2^\delta(n, -n)$. In \cite{m} it was shown that these describe the infinitesimal moments of the Gaussian orthogonal ensemble. 

We will also need the pairing $\omega \in NC_2(2n)$ given by $\omega = (1, 2) (3, 4) \cdots\ab (2n -1 , 2n)$. Again both $\delta$ and $\omega$ depend on $n$. One uses $\omega$ to describe the bijection: $NC_2(2n) \ni \pi \mapsto \omega\pi|_E \in NC(E) \cong NC(n)$ where $E = \{2, 4, 6, \dots, 2n\}$. See Remark \ref{remark:pairings_to_permutations}. The annular version of this is illustrated in Figure \ref{fig:permutation_to_permutations}.  

We set $\bZ_2 = \{-1, 1\}$, let $\epsilon \in \bZ_2^{2n}$ be given
  by $\epsilon = (1, -1, \dots, 1, -1)$. We shall also
  regard $\epsilon$ as a permutation in $S_{\pm 2n}$ as
  follows. 
  
\begin{enumerate}
\item
If $k \in [\pm 2n]$ we let $\epsilon(k) =
  \epsilon_{|k|} k$. As permutations $\epsilon$ and $\delta$
  commute.
\end{enumerate}
For notational convenience we shall write:
\begin{enumerate}\setcounter{enumi}{1}
\item
$\tilde\omega = \omega \delta \omega \delta = (1,2) (-1, -2) \cdots (2n-1, 2n)(-(2n-1), -2n) = \epsilon \omega \delta \omega \epsilon$,

\smallskip\item
$\tg = \gamma \delta \gamma^{-1} \delta$.
\end{enumerate}
We are using the convention that a permutation $\pi \in S_n$ is considered a permutation in $S_{\pm n}$ where $\pi$ acts trivially on negative numbers. Note that 
\begin{enumerate} \setcounter{enumi}{3} 
\item
$\epsilon \gamma \delta \gamma^{-1}\epsilon =
\gamma \omega \gamma^{-1} \delta \omega \delta$. 
\end{enumerate}
Given an $n$-tuple $(j_1, j_2, \dots, j_n)$ with $j_k \in [N]$ for some integer $N$ and $1 \leq k \leq n$, we consider $j$ to be a function from $[n]$ to $[N]$ and its kernel, $\ker(j)$ to be the partition on $[n]$ such that $r \sim_{\ker(j)} s$ if and only if $j_r = j_s$. We use the usual ordering on partitions, namely $\pi \leq \sigma$ means every block of $\pi$ is contained in some block of $\sigma$. For example 
\begin{enumerate} \setcounter{enumi}{4} 
\item
$\ker(j) \geq \gamma \delta \gamma^{-1}$ $\Leftrightarrow $ $j_{-1} =
j_2$, $j_{-2} = j_3$, \dots, $j_{-2n} = j_1$.
\end{enumerate}
When $\pi$ and $\sigma$ are permutations and we write $\pi \vee \sigma$ we considering them to be partition and $\pi \vee \sigma$ is the smallest partition in $\cP(n)$ larger than or equal to $\pi$ and $\sigma$.

\section{Expansion of the trace of a power of $X_N$}
\label{section:expansion_of_the_trace}

In this section we shall write $\E(\tr(X_N^n))$ as a sum over pairings of $[2n]$. This is similar to the calculation done in \cite{glm} and \cite{r1}, but in order to extract the $1/N$ term we repeat it here as we need to have it expressed in our notation. 

\begin{remark}
Given a $4n$-tuple $j = (j_{\pm1}, \dots, j_{\pm 2n})$ where $1
\leq j_1, \dots , j_{2n}\leq N$ and $1 \leq j_{-1}, \dots,
j_{-2n} \leq M$, let $i = j \circ \epsilon$ where $\epsilon$ is as in $(a)$ above. Then
$(G^{(\epsilon_k)})_{j_kj_{-k}} = g_{i_ki_{-k}}$. Also  if $\ker(j) \geq \gamma \delta \gamma^{-1}$, then $\ker(i) \geq \epsilon \gamma
\delta \gamma^{-1} \epsilon = \gamma \omega \gamma^{-1}\,
\delta \omega \delta$. In the equations below we use the
convention that

\centering $\ds\sum_{i_{\pm 1} ,\dots, i_{\pm 2n}}^{(N,M)} =
\sum_{i_1, \dots, i_{2n} =1}^N \sum_{i_{-1}, \dots,
  i_{-2n}=1}^M$.

\end{remark}

\begin{eqnarray*}\lefteqn{
\E(\tr(X_N^n))} \\ 
& = & N^{-(n + 1)}
  \E(\Tr(G^{(\epsilon_1)} \cdots G^{(\epsilon_{2n})})) 
\\ & 
\stackrel{(e)}{=} &
  N^{-(n + 1)}  \mathop{\sum_{j_{\pm 1}, \dots,
      j_{\pm 2n} = 1}^{(N,M)}}_{\ker(j) \geq
  \gamma\delta\gamma^{-1}} \E((G^{(\epsilon_1)})_{j_1j_{-1}}
    \cdots (G^{(\epsilon_{2n})})_{j_{2n}j_{-2n}}) \\ 
& = & N^{-(n +
      1)} \mathop{\sum_{i_{\pm 1}, \dots,
        i_{\pm 2n} = 1}^{(N,M)}}_{\ker(i) \geq
    \epsilon\gamma\delta\gamma^{-1}\epsilon}
      \E(g_{i_1i_{-1}} \cdots g_{i_{2n}i_{-2n}} ).
\end{eqnarray*}

Now $\E(g_{i_1i_{-1}} \cdots g_{i_{2n}i_{-2n}} ) = |\{\pi \in
\cP_2(2n) \mid i_r = i_s$ and $i_{-r} = i_{-s}$ whenever $(r,
s) \in \pi\}) = |\{\pi \mid \ker(i) \geq \pi \delta \pi \delta\}|$. Thus

\begin{eqnarray*}\lefteqn{%
\mathop{\sum_{i_{\pm 1}, \dots, i_{\pm 2n}}}_%
{\mathclap{\ker(i)\geq \gamma \omega \gamma^{-1} \delta \omega \delta}}
  \E(g_{i_1i_{-1}} \cdots g_{i_{2n}i_{-2n}} ) 
= 
\sum_{\mathclap{\pi \in
    \cP_2(2n)}} |\{ i \mid \ker(i) \geq \gamma \omega \gamma^{-1} \delta \omega \delta \vee \pi \delta \pi \delta \}|  }\\
&=&
\sum_{\pi \in \cP_2(2n)} M^{\#(\delta \omega \delta \vee \delta \pi \delta)}
N^{\#(\gamma \omega \gamma^{-1} \vee \pi)} 
=
\sum_{\pi \in \cP_2(2n)} M^{\#(\omega  \vee  \pi )}
N^{\#(\gamma \omega \gamma^{-1} \vee \pi)} \\
&=&
\sum_{\pi \in \cP_2(2n)} \Big(\frac{M}{N}\Big)%
^{\#(\omega  \vee  \pi )}
N^{\#(\gamma \omega \gamma^{-1} \vee \pi) + \#(\omega  \vee  \pi )}.
\end{eqnarray*} 

We now have proved the following lemma.
\begin{lemma}\label{lemma:1}
\begin{equation}\label{equation:1}
\E(\tr(X_N^n)) =  
\sum_{\mathclap{\pi \in \cP_2(2n)}} \ \ \Big(\frac{M}{N}\Big)%
^{\#(\omega  \vee  \pi )}
N^{\#(\gamma \omega \gamma^{-1} \vee \pi) + \#(\omega  \vee  \pi ) -(n+1)}
\end{equation}
\end{lemma}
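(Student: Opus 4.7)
The plan is to compute $\E(\tr(X_N^n))$ by brute expansion and Wick's formula, keeping careful track of the kernel conditions that arise from the cyclic structure of the trace and from the change of variables $i = j\circ\epsilon$. Writing $X_N = N^{-1} GG^t$, the trace $\tr(X_N^n)$ becomes $N^{-(n+1)}\Tr(G^{(\epsilon_1)}\cdots G^{(\epsilon_{2n})})$ with the alternating convention $(\epsilon_1,\dots,\epsilon_{2n}) = (1,-1,\dots,1,-1)$. Expanding this trace gives a sum over tuples $j = (j_{\pm 1},\dots,j_{\pm 2n})$ where $j_k$ ranges in $[N]$ for $k>0$ and in $[M]$ for $k<0$; the requirement that consecutive matrices share an index, together with the cyclic trace, translates exactly into $\ker(j)\geq \gamma\delta\gamma^{-1}$, which is identity (e).

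Next I would pass from $j$ to $i = j\circ\epsilon$ so that every matrix entry appearing in the product can be written uniformly as $g_{i_k i_{-k}}$; the kernel condition becomes $\ker(i)\geq \epsilon\gamma\delta\gamma^{-1}\epsilon = \gamma\omega\gamma^{-1}\delta\omega\delta$ by identity (d). Then Wick's formula applied to the Gaussian product $\E(g_{i_1 i_{-1}}\cdots g_{i_{2n}i_{-2n}})$ rewrites it as the number of pairings $\pi\in\cP_2(2n)$ such that $g_{i_r i_{-r}} = g_{i_s i_{-s}}$ whenever $(r,s)\in\pi$. This coincidence condition reads $i_r=i_s$ and $i_{-r}=i_{-s}$ on positive and negative indices respectively, i.e. $\ker(i)\geq \pi\,\delta\pi\delta$. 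Swapping the order of summation over $\pi$ and $i$ turns the expectation into $\sum_{\pi\in\cP_2(2n)}|\{i: \ker(i)\geq \gamma\omega\gamma^{-1}\delta\omega\delta \vee \pi\delta\pi\delta\}|$.

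The remaining step is to count these index tuples. Since positive indices range in $[N]$ and negative indices in $[M]$, and the joined partition has no block mixing the two sides, the count factors as
\[
N^{\#(\gamma\omega\gamma^{-1}\vee\pi)}\cdot M^{\#(\delta\omega\delta\vee\delta\pi\delta)}.
\]
Conjugation by $\delta$ preserves the number of blocks of a join, so $\#(\delta\omega\delta\vee\delta\pi\delta) = \#(\omega\vee\pi)$. Plugging this back and rewriting $M^{\#(\omega\vee\pi)} = (M/N)^{\#(\omega\vee\pi)} N^{\#(\omega\vee\pi)}$, then absorbing $N^{-(n+1)}$, gives the stated formula.

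The only nontrivial point is the bookkeeping identity (d), namely $\epsilon\gamma\delta\gamma^{-1}\epsilon = \gamma\omega\gamma^{-1}\delta\omega\delta$, which one verifies by evaluating both sides on $\pm k$ using that $\epsilon$ commutes with $\delta$ and that $\omega = \epsilon\delta\epsilon\delta\cdot\omega$ up to the alternating sign pattern built into $\epsilon$. Everything else is essentially mechanical once the conventions (a)--(e) are fixed, so the main obstacle is just aligning these conventions carefully and making sure no factor of $N$ or $M$ is dropped when passing between the $j$-indexing and the $i$-indexing.
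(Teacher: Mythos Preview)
Your proposal is correct and follows essentially the same approach as the paper: expand the trace, change variables via $i=j\circ\epsilon$ using identities (d) and (e), apply Wick's formula to get the condition $\ker(i)\geq\pi\delta\pi\delta$, swap the summations, and count index tuples by separating positive and negative indices. The only cosmetic difference is that the paper simply quotes identity (d) rather than sketching its verification.
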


\begin{remark}\label{remark:product_of_pairings}
We have to decide for a pairing $\pi$ what the maximum value
of $\#(\gamma \omega \gamma^{-1} \vee \pi) + \#(\omega \vee
\pi ) -(n+1)$ is. Recall that if $p$ and $q$ are pairings
and $p \vee q$ denotes the join as partitions then $2(p \vee
q) = \#(pq)$. Moreover we can write the cycle decomposition
of $pq$ as $pq = c_1c_1' \cdots c_k c_k'$ where $c_l' = q
c_l^{-1} q$ and the blocks of $p \vee q$ are $\{c_1 \cup c_1', \dots, c_k \cup c'_k\}$, see \cite[Lemma 2]{mp}. 
\end{remark}

\begin{remark}\label{remark:sum_over_pairings}
In the proof of Lemma \ref{lemma:1} we have established the following expansion, writing $\ols\pi = \gamma^{-1} \pi \gamma$,  
\[
\E(\Tr(X_N^n)) = N^{-n} \kern-0.5em
\sum_{\pi \in \cP_2(2n)} M^{\#(\omega \vee \pi)}
N^{\#(\omega \vee \ols\pi)}. 
\]
For a complex Wishart matrix, $Y_N$, we have
\[
\E(\Tr(Y_N^n)) = N^{-n} 
\sum_{\sigma \in S_n} M^{\#(\sigma)}
N^{\#(\sigma^{-1}\gamma)}. 
\]
This suggests that $\ols\pi$ plays the role of the Kreweras complement of $\pi$. Indeed, the leading term of equation (\ref{equation:1}) is given by those $\pi$'s for which $\#(\omega \vee \pi) + \#(\omega \vee \ols\pi) = n+1$; these are exactly the non-crossing pairings. The subleading term of equation (\ref{equation:1}) must be exactly those $\pi$'s for which $\#(\omega \vee \pi) + \#(\omega \vee \ols\pi) = n$. These $\pi$'s are not planar on the circle, but will be planar when drawn in an annulus. See Figure \ref{fig:permutation_to_permutations} for an example, and Notation \ref{notation:tilde_gamma} for additional comments on the Kreweras complement.
\end{remark}

\setbox1=\hbox{%
\begin{tikzpicture}[anchor=base, baseline]
\node[above] at (0.5,.45)  {$1$};
\node[above] at (1.0,.45)  {$2$};
\node[above] at (1.5,.45)  {$3$};
\node[above] at (2.0,.45)  {$4$};
\node[above] at (2.5,.45)  {$5$};
\node[above] at (3.0,.45)  {$6$};
\node[above] at (3.5,.45)  {$7$};
\node[above] at (4.0,.45)  {$8$};
\node[above] at (4.5,.45)  {$9$};
\node[above] at (5.0,.45)  {$10$};
\node[above] at (5.5,.45)  {$11$};
\node[above] at (6.0,.45)  {$12$};
\node[above] at (6.5,.45)  {$13$};
\node[above] at (7.0,.45)  {$14$};
\node[above] at (7.5,.45)  {$15$};
\node[above] at (8.0,.45)  {$16$};
\node[above] at (8.5,.45)  {$17$};
\node[above] at (9.0,.45)  {$18$};
\node[above] at (9.5,.45)  {$19$};
\node[above] at (10.0,.45) {$20$};
\node[above] at (10.5,.45)  {$21$};
\node[above] at (11.0,.45) {$22$};
\draw [thick]       (0.5,1.0) -- (0.5, 1.3) -- (1.0, 1.3) -- (1.0,1.0);
\draw [thick]       (1.5,1.0) -- (1.5, 1.3) -- (2.0, 1.3) -- (2.0,1.0);
\draw [thick]       (2.5,1.0) -- (2.5, 1.3) -- (3.0, 1.3) -- (3.0,1.0);
\draw [thick]       (3.5,1.0) -- (3.5, 1.3) -- (4.0, 1.3) -- (4.0,1.0);
\draw [thick]       (4.5,1.0) -- (4.5, 1.3) -- (5.0, 1.3) -- (5.0,1.0);
\draw [thick]       (5.5,1.0) -- (5.5, 1.3) -- (6.0, 1.3) -- (6.0,1.0);
\draw [thick]       (6.5,1.0) -- (6.5, 1.3) -- (7.0, 1.3) -- (7.0,1.0);
\draw [thick]       (7.5,1.0) -- (7.5, 1.3) -- (8.0, 1.3) -- (8.0,1.0);
\draw [thick]       (8.5,1.0) -- (8.5, 1.3) -- (9.0, 1.3) -- (9.0,1.0);
\draw [thick]       (9.5,1.0) -- (9.5, 1.3) -- (10.0, 1.3) -- (10.0,1.0);
\draw [thick]       (10.5,1.0) -- (10.5, 1.3) -- (11.0, 1.3) -- (11.0,1.0);
\draw [ultra thick] (0.5, 0.5) -- (0.5, -0.2) -- (6.5, -0.2) -- (6.5, 0.5);
\draw [thick]       (1.0, 0.5) -- (1.0, 0.2) -- (1.5, 0.2) -- (1.5, 0.5);
\draw [thick]       (2.0, 0.5) -- (2.0, 0.2) -- (2.5 ,0.2) -- (2.5, 0.5);

\draw [ultra thick] (3.0, 0.5) -- (3.0, -0.4) -- (7.0, -0.4) -- (7.0, 0.5);
\draw [ultra thick] (3.5, 0.5) -- (3.5, -0.6) -- (7.5 , -0.6) -- (7.5, 0.5);
\draw [ultra thick] (4.0, 0.5) -- (4.0, -0.8) -- (8.0, -0.8) -- (8.0, 0.5);
\draw [thick]       (4.5 , 0.5) -- (4.5, -0.0) -- (6.0 , -0.0) -- (6.0, 0.5);
\draw [thick]       (5.0, 0.5) -- (5.0, 0.2) -- (5.5, 0.2) -- (5.5, 0.5);
\draw [thick]       (9.0, 0.5) -- (9.0, 0.2) -- (9.5, 0.2) -- (9.5, 0.5);
\draw [thick]       (8.5, 0.5) -- (8.5, 0.0) -- (10.0, 0.0) -- (10.0, 0.5);
\draw [thick]       (10.5, 0.5) -- (10.5, 0.2) -- (11, 0.2) -- (11, 0.5);
\end{tikzpicture}}

\begin{figure}
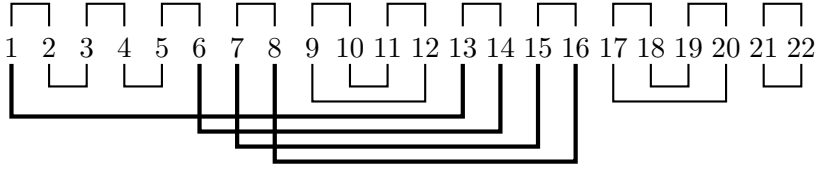

\begin{center}\leavevmode\box1\end{center}
\caption{\label{fig:pi_and_omega}\small We let $I_1 = \{2, 3, 4, 5\}$, $I_2 = \{9, 10, 11, 12\}$, $I_3 = \{17, 18, 19, 20, 21,  \ab 22\}$ and then $J = \{1, 6, 7, 8, 13, 14, 15, 16\}$ is the complement of $I_1 \cup I_2 \cup I_3$. To make a pairing $\pi$ such that $\#(\omega \vee \pi) + \#(\ols\omega \vee \pi) = 22/2$ we take the  pairing of $J$: $(1, 13)$, $(6, 14)$, $(7, 15)$, and $(8, 16)$ of $J$, shown by ht 4 thick lines above. Then we choose non-crossing pairings of $I_1$, $I_2$, and $I_3$, shown by the thin lines. This produces $\pi = \{(\bm 1, \bm {13}),  (2, 3),  (4, 5),  (\bm 6, \bm{14}), (\bm 7, \bm{15}), \ab (\bm 8, \bm{16}),   (9, 12),(10, 11), (17, 20), (18, 19), (21, 22)\}$. On the top row we have $\omega = \{(1,2),  (3, 4), \ab \dots, (21, 22)\}$.   We have $\#(\omega \vee \pi)\ab  = 5$.} 
\end{figure}

\bigskip\bigskip

\setbox2=\hbox{%
\begin{tikzpicture}[anchor=base, baseline]
\node[above] at (0.5,.45)  {$1$};
\node[above] at (1.0,.45)  {$2$};
\node[above] at (1.5,.45)  {$3$};
\node[above] at (2.0,.45)  {$4$};
\node[above] at (2.5,.45)  {$5$};
\node[above] at (3.0,.45)  {$6$};
\node[above] at (3.5,.45)  {$7$};
\node[above] at (4.0,.45)  {$8$};
\node[above] at (4.5,.45)  {$9$};
\node[above] at (5.0,.45)  {$10$};
\node[above] at (5.5,.45)  {$11$};
\node[above] at (6.0,.45)  {$12$};
\node[above] at (6.5,.45)  {$13$};
\node[above] at (7.0,.45)  {$14$};
\node[above] at (7.5,.45)  {$15$};
\node[above] at (8.0,.45)  {$16$};
\node[above] at (8.5,.45)  {$17$};
\node[above] at (9.0,.45)  {$18$};
\node[above] at (9.5,.45)  {$19$};
\node[above] at (10.0,.45) {$20$};
\node[above] at (10.5,.45)  {$21$};
\node[above] at (11.0,.45) {$22$};
\draw [thick]       (0.5,1.0) -- (0.5, 1.5) -- (11.0, 1.5) -- (11.0,1.0);
\draw [thick]       (1.0,1.0) -- (1.0, 1.3) -- (1.5, 1.3) -- (1.5,1.0);
\draw [thick]       (2.0,1.0) -- (2.0, 1.3) -- (2.5, 1.3) -- (2.5,1.0);
\draw [thick]       (3.0,1.0) -- (3.0, 1.3) -- (3.5, 1.3) -- (3.5,1.0);
\draw [thick]       (4.0,1.0) -- (4.0, 1.3) -- (4.5, 1.3) -- (4.5,1.0);
\draw [thick]       (5.0,1.0) -- (5.0, 1.3) -- (5.5, 1.3) -- (5.5,1.0);
\draw [thick]       (6.0,1.0) -- (6.0, 1.3) -- (6.5, 1.3) -- (6.5,1.0);
\draw [thick]       (7.0,1.0) -- (7.0, 1.3) -- (7.5, 1.3) -- (7.5,1.0);
\draw [thick]       (8.0,1.0) -- (8.0, 1.3) -- (8.5, 1.3) -- (8.5,1.0);
\draw [thick]       (9.0,1.0) -- (9.0, 1.3) -- (9.5, 1.3) -- (9.5,1.0);
\draw [thick]       (10.0,1.0) -- (10., 1.3) -- (10.5, 1.3) -- (10.5,1.0);

\draw [ultra thick] (0.5, 0.5) -- (0.5, -0.2) -- (6.5, -0.2) -- (6.5, 0.5);
\draw [thick]       (1.0, 0.5) -- (1.0, 0.2) -- (1.5, 0.2) -- (1.5, 0.5);
\draw [thick]       (2.0, 0.5) -- (2.0, 0.2) -- (2.5 ,0.2) -- (2.5, 0.5);

\draw [ultra thick] (3.0, 0.5) -- (3.0, -0.4) -- (7.0, -0.4) -- (7.0, 0.5);
\draw [ultra thick] (3.5, 0.5) -- (3.5, -0.6) -- (7.5 , -0.6) -- (7.5, 0.5);
\draw [ultra thick] (4.0, 0.5) -- (4.0, -0.8) -- (8.0, -0.8) -- (8.0, 0.5);
\draw [thick]       (4.5 , 0.5) -- (4.5, -0.0) -- (6.0 , -0.0) -- (6.0, 0.5);
\draw [thick]       (5.0, 0.5) -- (5.0, 0.2) -- (5.5, 0.2) -- (5.5, 0.5);
\draw [thick]       (9.0, 0.5) -- (9.0, 0.2) -- (9.5, 0.2) -- (9.5, 0.5);
\draw [thick]       (8.5, 0.5) -- (8.5, 0.0) -- (10.0, 0.0) -- (10.0, 0.5);
\draw [thick]       (10.5, 0.5) -- (10.5, 0.2) -- (11, 0.2) -- (11, 0.5);
\end{tikzpicture}}

\begin{figure}
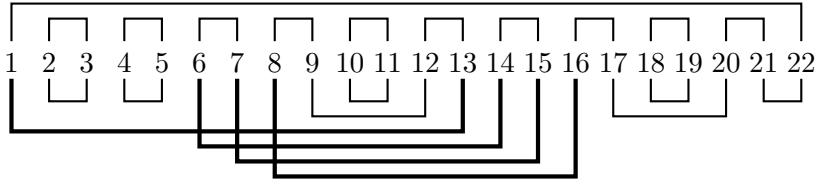

\begin{center}\leavevmode\box2\end{center}
\caption{\label{fig:pi_and_bar_omega}\small 
On the top row we have $\ols\omega = \gamma^{-1}\omega \gamma$. We have $\#(\ols\omega \vee \pi)\ab  = 6$. Thus $\#(\omega \vee \pi) + \#(\ols\omega \vee \pi)\ab  = 11$. So this $\pi$ contributes to the $N^{-1}$ term, i.e the infinitesimal  term. See Remark \ref{remark:construction_of_pi}}. 
\end{figure}

\begin{remark}\label{remark:construction_of_pi}
There is a simple way to generate all possible examples of $\pi \in \cP_2(2n)$ such that $\#(\omega \vee \pi) + \#(\omega \vee \ols\pi) = n$. Given $n$, choose disjoint intervals, each of even length, $I_1, \dots, I_k \subseteq [2n]$ such that the cardinality of $J$, the complement of $\cup_{i=1}^k I_k$, is divisible by $4$. Let $J = \{ j_1, \dots, j_m\}$ with $m = 2p$,  and consider the pairs $(j_1, j_{p+1}), \dots, (j_l, j_{p+l}), \dots, (j_p, j_{2p})$. Choose a non-crossing pairing for each interval $I_1, \dots, I_k$. Together these will give a $\pi$ satisfying $\#(\omega \vee \pi) + \#(\omega \vee \ols\pi) = n$. This is illustrated in Figures \ref{fig:pi_and_omega} and \ref{fig:pi_and_bar_omega}. 
\end{remark}

\begin{lemma}
\begin{enumerate}
\item

For $\pi \in \cP_2(2n)$ we have $\#(\gamma \omega
\gamma^{-1} \vee \pi) + \#(\omega \vee \pi ) -(n+1) \leq 0$,
with equality only if for all $(r, s) \in \pi$ we have
$\epsilon_r = - \epsilon_s$ and $\pi$ is a non-crossing
pairing of $[2n]$.

\item
If there is $(r, s) \in \pi$ with $\epsilon_r = \epsilon_s$
then $\#(\gamma \omega \gamma^{-1} \vee \pi) + \#(\omega
\vee \pi ) -(n+1) \leq -1$, with equality only if
$\epsilon\pi\delta\pi \epsilon$ is a non-crossing pairing of
a $(2n, -2n)$-annulus.
\end{enumerate}
\end{lemma}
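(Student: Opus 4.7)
The plan is to apply the geodesic formula (\ref{eq:geodesic_equation}), summed over the orbits of the group generated by two chosen permutations in $S_{2n}$. The key algebraic identity, valid because $\pi$ is an involution, is
\[
(\omega\pi)(\pi\gamma\omega\gamma^{-1}) = \omega\gamma\omega\gamma^{-1},
\]
and a direct computation shows that $\omega\gamma\omega\gamma^{-1}$ has exactly two cycles: $(1,2n-1,2n-3,\ldots,3)$ and $(2,4,\ldots,2n)$, namely the odd and even index classes.

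For the inequality in (a), I would apply (\ref{eq:geodesic_equation}) to $\alpha=\omega\pi$ and $\beta=\omega\gamma\omega\gamma^{-1}$, noting $\alpha^{-1}\beta = \pi\gamma\omega\gamma^{-1}$. Summing the formula orbit by orbit (with total genus $g\ge 0$ and orbit count $k$) gives $\#(\omega\pi) + \#(\gamma\omega\gamma^{-1}\pi) + \#(\omega\gamma\omega\gamma^{-1}) = 2n + 2(k-g)$. Since $\beta$ has $2$ cycles, adjoining $\alpha$ can only merge orbits, so $k\le 2$; with $\#(\omega\gamma\omega\gamma^{-1})=2$ we get $\#(\omega\pi) + \#(\gamma\omega\gamma^{-1}\pi) \le 2n+2$. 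Dividing by two via $2\#(p\vee q)=\#(pq)$ for pairings (Remark \ref{remark:product_of_pairings}) yields the bound of (a).

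The equality case of (a) requires $k=2$ and $g=0$. The condition $k=2$ says that $\omega\pi$ preserves the partition into odd and even indices; since $\omega$ toggles parity, this is equivalent to $\pi$ toggling parity on every pair, i.e.\ $\epsilon_r = -\epsilon_s$ for every $(r,s)\in\pi$. Given bipartiteness, restrict to the even orbit $E=\{2,4,\ldots,2n\}$: there $\omega\gamma\omega\gamma^{-1}|_E$ is the single $n$-cycle $(2,4,\ldots,2n)$, identified with $\gamma_n$ under $2j\mapsto j$, and the genus-zero condition on that orbit says exactly that $\omega\pi|_E$ is non-crossing on $[n]$. Via the standard bijection $\pi\mapsto\omega\pi|_E$ between bipartite pairings of $[2n]$ and permutations of $[n]$, this translates to $\pi\in NC_2(2n)$, completing the equality clause in (a).

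For part (b), the existence of some $(r,s)\in\pi$ with $\epsilon_r=\epsilon_s$ forces $\omega\pi$ to connect the odd and even classes (since $\omega(s)$ and $r$ then have opposite parity), collapsing $k$ to $1$; the same formula with $k=1$ gives $\#(\omega\pi) + \#(\gamma\omega\gamma^{-1}\pi) \le 2n$, proving the strict bound. The equality case with $k=1,\,g=0$ is a genus-zero condition for a factorization whose target permutation has two cycles; to rephrase this in annular terms I would push the identity from $[2n]$ to $[\pm 2n]$ using property $(d)$ of Section \ref{sec:notation_preliminaries}, $\epsilon\gamma\delta\gamma^{-1}\epsilon=\gamma\omega\gamma^{-1}\delta\omega\delta$, which converts the factorization of $\omega\gamma\omega\gamma^{-1}$ into one of $\tg=\gamma\delta\gamma^{-1}\delta$; the genus-zero condition for the lifted factorization is exactly the statement that $\epsilon\pi\delta\pi\epsilon$ is a non-crossing pairing of the $(2n,-2n)$-annulus. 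The main obstacle is this last translation: one must use the paired cycle structure of Remark \ref{remark:product_of_pairings} to verify that the doubling $\pi\mapsto\epsilon\pi\delta\pi\epsilon$ preserves the genus count, so that the annular non-crossing condition on the double is the correct lift of the single-orbit genus-zero condition on the pairing itself.
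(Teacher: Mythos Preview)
Your argument is correct and takes a genuinely different route from the paper. The paper lifts immediately to $S_{\pm 2n}$: it first proves the identity
\[
2\bigl(\#(\gamma\omega\gamma^{-1}\vee\pi)+\#(\omega\vee\pi)\bigr)=\#\bigl(\gamma\delta\gamma^{-1}\delta\cdot\epsilon\pi\delta\pi\epsilon\bigr)
\]
and then applies (\ref{eq:geodesic_equation}) once in $S_{\pm 2n}$, splitting into the two cases according to whether $\langle\gamma\delta\gamma^{-1}\delta,\epsilon\pi\delta\pi\epsilon\rangle$ is transitive. You instead stay in $S_{2n}$, factor $\omega\gamma\omega\gamma^{-1}=(\omega\pi)(\pi\gamma\omega\gamma^{-1})$, and apply the orbit-summed genus formula there; your case split $k=2$ versus $k=1$ is exactly the paper's non-transitive versus transitive split. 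Your approach is more elementary (smaller symmetric group, concrete two-cycle target), while the paper's buys the equality clause of $(b)$ for free, since the annular object $\epsilon\pi\delta\pi\epsilon$ is already on stage.

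Two small points. In the equality case of $(a)$ you restrict to the even orbit and invoke Remark~\ref{remark:pairings_to_permutations}; you should say why the odd orbit takes care of itself. Both $\omega\pi$ and $\pi\gamma\omega\gamma^{-1}$ are products of two parity-reversing pairings, so by Remark~\ref{remark:product_of_pairings} their cycles come in $E$/$O$ pairs, forcing $g_E=g_O$; hence $g=0$ is equivalent to $g_E=0$. For the equality clause of $(b)$, the ``obstacle'' you flag is lighter than you suggest: you do not need to track the doubling $\pi\mapsto\epsilon\pi\delta\pi\epsilon$ directly. Both your $S_{2n}$ computation and the paper's $S_{\pm 2n}$ computation evaluate the \emph{same} integer $2(\#(\gamma\omega\gamma^{-1}\vee\pi)+\#(\omega\vee\pi))$, the first as $2n-2g'_{\text{yours}}$ and the second (via the displayed identity above, which is a short chain using property $(d)$) as $2n-2g'_{\text{paper}}$. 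Thus the two genera coincide automatically, and your $g'=0$ condition is literally the statement that $\epsilon\pi\delta\pi\epsilon$ is a non-crossing pairing of the $(2n,-2n)$-annulus.
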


\begin{proof}
In the equations below we use `$\cdot$' to separate two
expressions being multiplied, just to make the reading
easier.
\begin{eqnarray*}\lefteqn{%
2( \#(\gamma \omega \gamma^{-1} \vee \pi) + \#(\omega  \vee  \pi )) =
\#(\delta \gamma \omega \gamma^{-1} \delta \cdot \delta \pi \delta) +
\#(\omega \pi)} \\
& = &
\#( \omega \pi \delta \gamma \omega \gamma^{-1} \delta \cdot \delta \pi \delta) 
 = 
\#(\omega \delta \gamma \omega \gamma^{-1} \delta \, \pi \delta \pi \delta ) \\
& = &
\#( \epsilon \gamma \delta \gamma^{-1} \epsilon \cdot \pi \delta \pi \delta) 
=
\#( \gamma \delta \gamma^{-1} \delta \cdot \epsilon \pi \delta \pi \epsilon).
\end{eqnarray*}
In particular $\#(\gamma \omega \gamma^{-1} \vee \pi) +
\#(\omega \vee \pi) = \#(\epsilon \gamma \delta \gamma^{-1}
\epsilon \vee \pi \delta \pi \delta)$.

Now $\gamma \delta
\gamma^{-1} \delta$ has 2 cycles and $\epsilon \pi \delta
\pi \epsilon$ is a pairing. Thus $\#(\gamma \delta
\gamma^{-1} \delta) = 2$ and $\#(\epsilon \pi \delta \pi
\epsilon) = 2n$.

Next we consider two cases. In the first case for all $(r,
s) \in \pi$ we have $\epsilon_r = -\epsilon_s$. Then
$\epsilon \pi \delta \pi \epsilon = \pi \delta \pi
\delta$. In this case
\[
\#(\gamma \delta \gamma^{-1} \delta \epsilon \pi \delta \pi \epsilon)
=
\#(\gamma \delta \gamma^{-1} \delta \pi \delta \pi \delta)
=
\#(\gamma\pi \delta \gamma^{-1}\pi \delta)
 = 2 \#(\gamma \pi).
\]
So by Equation \ref{eq:geodesic_equation} we have for some integer $g \geq 0$
\[
\#(\pi) + \#(\gamma\pi) + \#(\gamma) = 2n + 2(1 - g).
\]
So
\[
\#(\gamma\pi) = n + 1 - 2g.
\]
Thus
\begin{eqnarray*}
\#( \epsilon \gamma \delta \gamma^{-1} \epsilon \vee \pi
\delta \pi \delta) \ab -(n + 1) = \#(\gamma\pi) - (n +
1) = -2 g \leq 0
\end{eqnarray*}
So in the first case we get $\#(\gamma \omega \gamma^{-1}
\vee \pi) + \#(\omega \vee \pi ) -(n+1) = 0$ only when $\pi$
is non-crossing and $-2$ or less when $\pi$ has a
crossing. This proves $(a)$.

In the second case there is some $(r, s) \in \pi$ such that
$\epsilon_r = \epsilon_s$. In this case $\langle \gamma
\delta\gamma^{-1} \delta, \epsilon \pi \delta \pi \epsilon
\rangle$ acts transitively on $[\pm 2n]$. So again by Equation
\ref{eq:geodesic_equation}  we have for some integer $g' \geq 0$
\[
\#( \gamma \delta \gamma^{-1} \delta \epsilon \pi \delta \pi
\epsilon ) + \#(\epsilon \pi \delta \pi \epsilon ) +
\#(\gamma \delta \gamma^{-1} \delta) = 4n + 2(1 - g')
\]
Thus
\[
\#( \epsilon \gamma \delta \gamma^{-1} \epsilon \vee \pi
\delta \pi \delta) \ab -(n + 1) = -1 - 2 g' \leq -1.
\]
Moreover $g' = 0$ only if $\epsilon\pi\delta\pi \epsilon$ is
a non-crossing pairing of a $(2n, -2n)$-annulus. This proves
$(b)$.
\end{proof}

\begin{remark}\label{remark:pairings_to_permutations}
We have identified the leading term as all the non-crossing
pairings $\pi$ where $\epsilon_r = - \epsilon_s$ for all
$(r, s) \in \pi$. But the second condition is automatic for
a non-crossing pairing. Thus the $N^0$ term is exactly the
part of the sum where $\pi \in NC_2(2n)$. Recall that there
is a bijection from $NC_2(2n) \ni \pi \mapsto \sigma_\pi \in
NC(n)$ and under this bijection $\#(\omega \vee \pi) =
\#(\sigma_\pi)$. The map is given explicitly as follows.

For the purposes of this remark let $E = \{2, 4, 6, \dots,
2n\}$ and $O = \{1,3,5,\ab \dots, 2n-1\}$; this use is at
variance with what we use in Notation
\ref{notation:E_and_O}, but the discussion will be
illustrative. Note that $\gamma^2$ leaves $O$ and $E$
invariant, we denote by $\gamma_E$ the restriction of
$\gamma^2$ to $E$.

Both $\pi$ and $\omega$ map $E$ onto $O$ and vice
versa. Thus $\omega\pi$ leaves both $E$ and $O$
invariant. Let $\sigma_\pi$ be the restriction of
$\omega\pi$ to $E$. Since both $\pi$ and $\omega$ are
pairings we have $\#(\omega \vee \pi) =
\#(\sigma_\pi)$. Moreover we have the following $\omega
\gamma |_E = \gamma^2|_E$, so $\pi \omega \gamma^2|_E = \pi
\omega \omega \gamma|_E = \pi \gamma|_E$.  Also, as maps
from $O$ to $E$, we have $\gamma\omega\gamma^{-1} |_O =
\gamma^{-1}|_O$. Thus $\gamma\omega \gamma^{-1} \pi |_E =
\gamma \omega \gamma^{-1}|_O \pi|_E = \gamma^{-1}|_O \pi |_E
= \gamma^{-1}\pi|_E$. Hence $\#(\sigma_\pi^{-1}\gamma_E) =
\#(\pi \omega \gamma^2|_E) = \#(\gamma^{-1} \pi|_E) =
\#(\gamma \omega \gamma^{-1} \pi |_E) = \#(\gamma\omega
\gamma^{-1} \vee \pi)$.

Since $\#(\pi \vee \omega) + \#(\pi \vee \gamma \omega
\gamma^{-1}) = n + 1$ we have that $\#(\sigma_\pi)
+ \#(\sigma_\pi^{-1} \gamma_E) = n + 1$ and thus,
$\sigma_\pi$ is non-crossing, and $\#(\omega \vee \pi) =
\#(\sigma_\pi)$.

Conversely given $\sigma \in NC(E)$, let $\pi_\sigma =
\sigma^{-1} \omega \sigma$ be a pairing of $[2n]$. Here we
using the convention that $\sigma$ acts trivially on
$O$. Then $\omega \pi_\sigma|_E = \omega\sigma^{-1}\omega
\sigma|_E = \sigma$. Also we have $\#(\omega \vee
\pi_\sigma) = \#(\sigma)$ and $\#(\gamma \omega \gamma^{-1}
\vee \pi_\sigma) = \#(\sigma^{-1} \gamma_E)$. Thus
$\#(\omega \vee \pi_\sigma) + \#(\gamma \omega \gamma^{-1}
\vee \pi_\sigma) = n+1$. So $\pi_\sigma$ is
non-crossing. This gives the bijection between the
non-crossing pairings of $[2n]$ and the non-crossing
partitions of $E$. In Proposition \ref{prop:annular_bijection} we
shall extend this construction to the annular case.
\end{remark}

\begin{corollary}
The only pairings $\pi$ that can contribute to the
coefficient of $N^{-1}$ in $(\ref{equation:1})$ are those for
which there is at least one $(r, s) \in \pi$ such that
$\epsilon_r = \epsilon_s$.
\end{corollary}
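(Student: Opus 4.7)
The plan is to observe that this corollary is essentially an immediate consequence of the preceding Lemma, once one examines the parity of the exponent of $N$ in the two cases. The exponent in question is $\#(\gamma\omega\gamma^{-1}\vee\pi)+\#(\omega\vee\pi)-(n+1)$, and the claim is simply that this exponent equals $-1$ only for $\pi$ lying in case $(b)$ of the Lemma.

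First, I would handle case $(a)$ of the Lemma, in which $\epsilon_r=-\epsilon_s$ for every $(r,s)\in\pi$. The proof of the Lemma showed that in this situation
\[
\#(\gamma\omega\gamma^{-1}\vee\pi)+\#(\omega\vee\pi)-(n+1)\;=\;-2g
\]
for some integer $g\geq 0$ arising from the genus equation \eqref{eq:geodesic_equation}. Thus the exponent is always an even non-positive integer: it equals $0$ precisely when $\pi\in NC_2(2n)$ and is at most $-2$ otherwise. In particular, it cannot equal $-1$, so no $\pi$ in case $(a)$ contributes to the coefficient of $N^{-1}$.

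Next, turning to case $(b)$, where there exists $(r,s)\in\pi$ with $\epsilon_r=\epsilon_s$, the Lemma gives that the exponent is at most $-1$, with equality possible (precisely when $\epsilon\pi\delta\pi\epsilon$ is a non-crossing pairing of the $(2n,-2n)$-annulus). Hence every $\pi$ contributing a nonzero term to the $N^{-1}$-coefficient in \eqref{equation:1} must lie in case $(b)$, which is exactly the statement of the corollary.

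There is no real obstacle here; the whole content has already been packaged in the Lemma, and the corollary is the matter of noticing the parity dichotomy $\{0,-2,-4,\dots\}$ versus $\{-1,-3,-5,\dots\}$ between the two cases. The only thing worth emphasizing in the write-up is the cleanly disjoint ranges of values of the exponent in the two cases, since it is this disjointness that forces the $N^{-1}$ coefficient to live entirely in case $(b)$.
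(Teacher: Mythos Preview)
Your argument is correct and matches the paper's approach: the paper gives no separate proof for this corollary, treating it as immediate from the preceding Lemma, and your write-up spells out exactly the right reason---namely that in case~$(a)$ the exponent equals $-2g$ and hence can never be $-1$.

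One small correction to your final paragraph: the ``parity dichotomy'' you describe is not quite right. In case~$(b)$ the proof of the Lemma shows the exponent equals $-1-g'$ for an integer $g'\ge 0$, so the possible values are $-1,-2,-3,\dots$, not only the odd ones. The two ranges $\{0,-2,-4,\dots\}$ and $\{-1,-2,-3,\dots\}$ are therefore \emph{not} disjoint. This does not affect your proof, since all you need is that $-1$ is absent from the case~$(a)$ range, which it is; but you should drop the claim of ``cleanly disjoint ranges'' and the odd-versus-even framing in the final sentence.
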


We have already seen that for such a pair we have the
subgroup $\langle \gamma \delta\gamma^{-1} \delta,\ab \epsilon
\pi \delta \pi \epsilon \rangle$ acts transitively on $[\pm
  2n]$ and so we may apply Equation \ref{eq:geodesic_equation}
to conclude
\[
\#( \gamma \delta \gamma^{-1} \delta \epsilon \pi \delta \pi
\epsilon ) + \#(\epsilon \pi \delta \pi \epsilon ) +
\#(\gamma \delta \gamma^{-1} \delta) = 4n + 2(1 - g')
\]
for some $g' \geq 0$. Hence 
\[
\#( \epsilon \gamma \delta \gamma^{-1} \epsilon \vee \pi
\delta \pi \delta) \ab -(n + 1) = -1 - g'
\]
So we seek all those pairings $\pi$ such that $g' =
0$. This means that $\epsilon \pi \delta \pi \epsilon$ is
non-crossing with respect to $\gamma \delta \gamma^{-1}
\delta = (1, 2,\ab \dots,\ab  2n)\ab (-2n, -(2n-1), \dots,\ab -2, -1)$. Thus
$\epsilon \pi \delta \pi \epsilon$ must be a non-crossing
annular pairing (see \cite[Thm. 6.1]{mn} and \cite[\S 4]{m}) such that
\begin{enumerate}[$(i)$]

\item
$\epsilon \pi \delta \pi \epsilon$ connects the two circles

\item
$\epsilon \pi \delta \pi \epsilon$ commutes with $\delta$

\item
the pairs of $\epsilon \pi \delta \pi \epsilon$ come in
pairs. For $(r, s) \in \pi$ with $\epsilon_r =
-\epsilon_s$, $\{(r, s),(-r, -s)\}$ are the corresponding
pairs of $\epsilon \pi \delta \pi \epsilon$. This produces a
pair of strings that do \textit{not} connect the two
cycles: $\gamma$ and $\delta\gamma^{-1}\delta$.

\item
For $(r, s) \in \pi$ with $\epsilon_r = \epsilon_s$, $\{(r, -s),(-r, s)\}$ are the
corresponding pairs of $\epsilon \pi \delta \pi
\epsilon$. This produces a pair of strings that \textit{do}
connect the two cycles: $\gamma$ and
$\delta\gamma^{-1}\delta$, i.e. \textit{through strings}.

\end{enumerate}

\begin{notation}
We denote by $NC_2^\delta(2n, -2n)$ the set of non-crossing annular pairings that satisfy (\textit{i}), (\textit{ii}), (\textit{iii}), and (\textit{iv}) above. 
\end{notation}

Combining the previous results we have with $\epsilon = (1 -1, 1, -1, \dots, 1, -1)\ab \in \bZ_2^{2n}$ the following theorem.
\begin{theorem}\label{thm:first_main_claim}
\begin{eqnarray*}\lefteqn{
m'_n = \lim_{N \rightarrow \infty}
N\big( \E(\tr(X^n)) - \sum_{\mathclap{\pi \in NC(n)}} c^{\#(\pi)} \big)}\\ 
&= & 
\sum_{\mathclap{\pi \in NC(n)}} c' \#(\pi) c^{\#(\pi)-1}
+ \quad
\mathop{\sum_{\mathclap{\pi \in \cP_2(2n)}}}_{\mathclap{\epsilon\pi\delta\pi\epsilon
\in NC_2^\delta(2n , -2n)}}\ 
c^{\#(\omega \vee \pi)}.
\end{eqnarray*}
\end{theorem}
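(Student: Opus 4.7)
The plan is to decompose the sum in Lemma \ref{lemma:1} according to the three possible values of the exponent $\#(\gamma\omega\gamma^{-1}\vee \pi) + \#(\omega\vee\pi) - (n+1)$ identified by the preceding lemma, and then read off the coefficient of $N^{-1}$ after subtracting the limit moment $m_n = \sum_{\pi \in NC(n)} c^{\#(\pi)}$. Write
\[
\E(\tr(X_N^n)) = A_0(N) + A_{-1}(N) + A_{\leq -2}(N),
\]
where $A_k(N)$ is the portion of the sum in \eqref{equation:1} contributed by pairings with $N$-exponent equal to $k$. By part (a) of the lemma, $A_0(N)$ is the sum over non-crossing pairings $\pi\in NC_2(2n)$ (for which the alternating sign condition $\epsilon_r=-\epsilon_s$ on pairs is automatic), and equals $\sum_{\pi\in NC_2(2n)} (M/N)^{\#(\omega\vee \pi)}$. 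By part (b), $A_{-1}(N) = N^{-1}\sum_{\pi} (M/N)^{\#(\omega\vee\pi)}$, where the inner sum is exactly over the pairings with $\epsilon\pi\delta\pi\epsilon \in NC_2^\delta(2n,-2n)$. Finally $A_{\leq -2}(N) = O(N^{-2})$, since the number of pairings is bounded independently of $N$ and each $(M/N)^k$ factor is bounded.

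For the shape term, I would use the bijection $NC_2(2n)\ni\pi\mapsto \sigma_\pi\in NC(n)$ from Remark \ref{remark:pairings_to_permutations}, under which $\#(\omega\vee\pi) = \#(\sigma_\pi)$. This lets me rewrite
\[
A_0(N) - m_n = \sum_{\pi\in NC_2(2n)} \Bigl[(M/N)^{\#(\omega\vee\pi)} - c^{\#(\omega\vee\pi)}\Bigr].
\]
Next I use the telescoping identity $(M/N)^k - c^k = (M/N - c)\sum_{j=0}^{k-1}(M/N)^j c^{k-1-j}$. Multiplying by $N$ and using $N(M/N - c) = M - cN \to c'$ together with $M/N \to c$, each term converges to $c'\cdot k \, c^{k-1}$. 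Summing and reindexing via the bijection gives
\[
\lim_N N\bigl(A_0(N) - m_n\bigr) = \sum_{\pi\in NC(n)} c'\#(\pi)\,c^{\#(\pi)-1},
\]
which is the first (shape) term of the theorem.

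For the $N^{-1}$ contribution, I multiply $A_{-1}(N)$ by $N$ and use $M/N\to c$ directly:
\[
\lim_N N\cdot A_{-1}(N) = \!\!\!\mathop{\sum_{\pi\in\cP_2(2n)}}_{\epsilon\pi\delta\pi\epsilon\in NC_2^\delta(2n,-2n)}\!\!\! c^{\#(\omega\vee\pi)},
\]
which is the second term. Finally, $N\cdot A_{\leq -2}(N) = O(N^{-1}) \to 0$. Combining the three pieces yields the claimed expression for $m'_n$.

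The step requiring the most care is the first one: one must verify that the $N^0$ contribution $A_0(N)$ indeed uses the exact same index set as the one giving $m_n$, so that the difference is term-by-term rather than only ``up to a bijection at the limit''. This is where Remark \ref{remark:pairings_to_permutations} is essential — it provides a concrete, cardinality-preserving bijection $\pi\leftrightarrow\sigma_\pi$ with $\#(\omega\vee\pi)=\#(\sigma_\pi)$, so the subtraction $A_0(N)-m_n$ can be carried out inside the sum, making the $(M/N)^k - c^k$ expansion applicable uniformly. The remaining steps — the geodesic bound and the classification of the $N^{-1}$ stratum — are already in hand from the preceding lemma, so once the bookkeeping in the $A_0$ stratum is clean, the theorem follows.
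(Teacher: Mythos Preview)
Your proposal is correct and follows essentially the same approach as the paper, which presents the theorem as a direct combination of Lemma~\ref{lemma:1}, the two-case lemma on the exponent, the corollary, and the bijection of Remark~\ref{remark:pairings_to_permutations}. Your write-up is in fact more explicit than the paper's, which simply states ``Combining the previous results we have\dots'': you spell out the telescoping $(M/N)^k - c^k = (M/N - c)\sum_j (M/N)^j c^{k-1-j}$ and the use of $N(M/N-c)=M-cN\to c'$ to extract the shape term, which the paper leaves implicit.
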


\section{from pairings to permutations}\label{section:from_pairings_to_permutations}

In this section we shall focus on the second term in the right hand side of Theorem \ref{thm:first_main_claim}. We want to rewrite this as a sum over $S_{NC}^\delta(n, -n)$. This is done in Proposition \ref{prop:annular_bijection}. In Proposition \ref{prop:counting_diagrams} we write this term as a polynomial in $c$ and give an explicit formula for each coefficient.

\begin{lemma}\label{lemma.structure.through.pairings}
Let $\pi \in \cP_2(2n)$ and $\rho = \epsilon \pi \delta \pi \epsilon \in NC_2^\delta(2n, -2n)$. Suppose the through strings of $\rho$ are $(r_1, -s_1), (r_2, -s_2), \dots,\ab (r_k, \ab -s_k)$, with $0 < r_1 < r_2 < \cdots < r_k$. Then
\begin{enumerate}
\item
$k$ is even; let $l = k/2$;

\item
for $1 \leq i \leq l$, $r_i = s_{l+i}$ and $s_i = r_{l+i}$;

\item
for $1 \leq i < k$, $r_i$ and $r_{i+1}$ have opposite parity;

\item
for all $i$, $r_i$ and $s_i$ have the same parity;

\item
$l$ is even.

\end{enumerate}
\smallskip
\end{lemma}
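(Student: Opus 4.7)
The plan is to prove the five statements roughly in the order (a), (d), (c), (e), (b). The first four are parity bookkeeping that follows quickly from the identification of the through-strings in items (iii)--(iv) of the discussion preceding the notation $NC_2^\delta(2n,-2n)$; the only genuinely geometric step will be (b).

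For (a) and (d), I read off the structure from item (iv): each through-string of $\rho$ arises from a pair $(r,s)\in\pi$ with $\epsilon_r=\epsilon_s$ and contributes the companion pair $\{(r,-s),(-r,s)\}$ to $\rho$. Since $\pi$ is a pairing we have $r\neq s$, so after normalising to positive first coordinate each such $\pi$-pair produces two distinct through-strings $(r,-s)$ and $(s,-r)$. Thus $k$ is even, proving (a). Moreover $\epsilon_r=\epsilon_s$ is the same as $r\equiv s \pmod 2$, so within each through-string the two indices share parity, giving (d).

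For (c) and (e), I would exploit the structure of the outer non-through-strings. By (iii) the outer non-through-string pairs of $\rho$ come from pairs $(r,s)\in\pi$ with $\epsilon_r=-\epsilon_s$, so they always pair elements of opposite parity. Because $\rho$ is non-crossing on the annulus, every outer non-through-string must lie entirely within a single arc of the outer circle cut off by consecutive through-string endpoints $r_i,r_{i+1}$. The arc $\{r_i+1,\dots,r_{i+1}-1\}$ must therefore decompose into opposite-parity pairs, which forces it to contain equal numbers of odd and even positions; a direct count shows this happens exactly when $r_i$ and $r_{i+1}$ have opposite parity, which is (c). The sequence $r_1,r_2,\dots,r_k$ therefore alternates in parity, and since $k=2l$ is even, exactly $l$ of them are odd. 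By (d) these $l$ odd outer endpoints pair amongst themselves into $l/2$ companion pairs, and for $l/2$ to be an integer one needs $l$ even, giving (e).

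For (b), the main step, I plan to invoke the standard description of non-crossing annular permutations (cf.\ \cite[Ch.~5]{ms} and \cite{mn}): once the outer endpoints are listed in $\gamma$-cyclic order as $r_1<\cdots<r_k$, the inner endpoints read in the $\delta\gamma^{-1}\delta$-cyclic order on the inner circle must appear as $-s_{1+c},\dots,-s_{k+c}$ (indices mod $k$) for some cyclic shift $c$. The symmetry $\delta\rho\delta^{-1}=\rho$ swaps each through-string with its companion and, through the indexing, realises the involution $i\mapsto i'$ defined by $r_{i'}=s_i$; read through the cyclic correspondence above this involution takes the form $i\mapsto i-c\pmod{k}$. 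Squaring it yields $2c\equiv 0\pmod{k}$, so $c\in\{0,l\}$, and $c=0$ is excluded because the companion involution is fixed-point-free ($r_i\ne s_i$, since the two through-strings arising from a single $\pi$-pair are distinct). Hence $c=l$, which is exactly the statement $r_{l+i}=s_i$ and $s_{l+i}=r_i$ for $1\le i\le l$. The principal obstacle will be setting up the cyclic-shift description cleanly: one has to keep careful track of the reversed orientation of the inner circle (the $-n$ in $S_{NC}(n,-n)$) and of the fact that the $\delta\gamma^{-1}\delta$-cycle traverses the inner circle opposite to the natural labelling, so that the sign of the shift $c$ is unambiguous. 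Once that bookkeeping is fixed, the involution argument is a one-line calculation.
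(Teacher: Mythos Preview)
Your proposal is correct and follows essentially the same line as the paper's proof: the spoke-diagram/cyclic-shift description of the through-strings for (b), and the parity bookkeeping from items (iii)--(iv) for the rest. The only cosmetic difference is ordering: the paper derives (a) and (b) together from the spoke structure (obtaining $2(t-1)\equiv 0\pmod k$ with $t\neq 1$, hence $k=2(t-1)$) and then deduces (e) from (b), (c), (d), whereas you extract (a) and (e) first from the fixed-point-free companion involution before invoking the spoke argument for (b) alone.
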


\begin{proof}
Since $(r_i, -s_i) \in \rho$ we have $(r_i, s_i) \in \pi$, and in particular $r_i \not = s_i$. Also since $(r, -s) \in \rho \Leftrightarrow (s, -r) \in \rho$, we have that for each $i$ there is $j$ such that $r_i = s_j$. Let $1 \leq t \leq k$ be such that $s_1 = r_t$. The through strings of $\rho$ must form a spoke diagram; so $s_2 = r_{t+1}$, $s_3 = t_{t+2}$, \dots, $s_i = r_{t+i-1}$ with the indices interpreted modulo $k$. Thus for all $i$, $(r_i, -r_{t+i-1}) \in \rho$ and $(r_{t+i-1}, -r_i) \in \rho$. Hence $r_i = s_{t+i-1} = r_{2(t-1)+i}$.  Hence $2(t-1) \equiv  0$ (mod $k$). We cannot have $t = 1$, otherwise $r_i = s_i$; so we must have $k = 2(t-1)$ and thus $k = 2l $ with $l = t-1$. Thus the through strings are $(r_1, -r_{l+1}), (r_2, -r_{l+2}), \dots, (r_{2l}, -r_l)$. This proves $(a)$ and $(b)$. 

Between $r_i$ and $r_{i+1}$ there are blocks of $\rho$ which do not cross any other pairs of $\rho$, hence there is an even number of points in the gap. Thus $r_i$ and $r_{i+1}$ have opposite parity. This proves $(c)$. 

If $(r, s) \in \pi$ then $(\epsilon(r), -\epsilon(s)) \in \rho$. If  $(\epsilon(r), -\epsilon(s))$ is a through string and $r$ is odd then we need $\epsilon(s) = s$ so $s$ must be odd as well. Likewise if $r$ is even, $s$ must be even. This proves $(d)$. Thus $r_1$ and $r_{l+1}$ will have the same parity so by $(c)$, $l$ must be even.
\end{proof}

A converse to the previous lemma is the following construction of all the pairings $\pi \in \cP_2(2n)$ that satisfy the condition $ \epsilon \pi \delta \pi \epsilon \in NC^\delta_2(n,-n)$. 
First, we choose mutually disjoint intervals of even length $I_1, I_2, \ldots, I_m \subset [2n]$ with the constraint that $4$ divides $\vert [2n] \setminus \cup_{j=1}^m I_j \vert $ and take $\{r_1 < r_2 < \cdots < r_{4t} \} = [2n] \setminus \cup_{j=1}^m I_j$. 
Note that the $r$'s alternate in parity since each $I_j$ has even length. 
Second, we choose non-crossing parings $\pi_{j} \in NC_{2}(I_{j})$ for $j = 1,2,\ldots,m$. 
Finally, taking 
$$ \pi = \pi_1 \pi_2 \cdots \pi_m   (r_1,r_{2t+1}) (r_2,r_{2t+2}) \cdots (r_{2t},r_{4t}),$$ 
we obtain obtain a pairing that satisfies $ \epsilon \pi \delta \pi \epsilon \in NC^\delta_2(n,-n)$.  

\begin{notation}\label{notation:E_and_O}
Let $E = \{2, 4, 6, \dots, 2n\} \cup \{-1, -3, -5, \dots, -(2n-1)\}$ and $O = \{1, 3, 5, \dots, 2n -1\} \cup \{ -2, -4, -6, \dots, -2n\}$. Recall that $\tilde \omega = \omega \delta \omega \delta = \epsilon\omega\delta\omega\epsilon$ and $\tilde\gamma = \gamma \delta \gamma^{-1} \delta$. This extends the notation used in Remark \ref{remark:pairings_to_permutations}. 
\end{notation}

\begin{remark}\label{remark:parity_reversal}
$\tilde\omega(O) = E$, $\tilde\omega(E) = O$. If $\rho \in NC_2^\delta(2n , -2n)$ then $\rho(O) = E$ and $\rho(E) = O$. Thus $\tilde\omega\rho(E) = E$. We also have that $\tg(O) = E$ and $\tg(E) = O$. Moreover $\tg|_O = \tom|_O$ and $\tg^{-1}|_E = \tom|_E$. 
\end{remark}

\begin{lemma}\label{lemma:the_restrictions}
If $\pi \in \cP_2(2n)$ and $\rho = \epsilon\pi\delta\pi\epsilon \in NC_2^\delta(2n -2n)$ then 
\begin{enumerate}

\item
$\#(\omega\pi) = \#(\tilde\omega\rho|_E)$,

\item
$\#(\omega \vee \pi) = 1/2 \#(\tilde\omega\rho|_E)$,

\item
$\#(\tom\rho|_E) + \#(\rho\tom\cdot \tg^2|_E) = 2n$.

\end{enumerate}
\end{lemma}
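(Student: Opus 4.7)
The plan is to reduce all three parts to the already-known structure of the pairing $\pi$ by conjugating the permutations on $E$ back to $[2n]$ via an explicit bijection $\phi$; after this reduction, (a) and (b) are immediate and (c) is exactly the subleading condition extracted in Section~\ref{section:expansion_of_the_trace}.

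Define $\phi\colon [2n]\to E$ by $\phi(k)=k$ if $k$ is even and $\phi(k)=-k$ if $k$ is odd; its image is exactly $E$. The central identity I will verify is
\[
\tom\rho\circ\phi = \phi\circ(\omega\pi).
\]
This is proved by a case analysis on the parities of $k$ and $\pi(k)$, using the explicit description of the pairs of $\rho=\epsilon\pi\delta\pi\epsilon$ coming from those of $\pi$: a same-parity pair $(r,s)\in\pi$ contributes $(r,-s)$ and $(s,-r)$ to $\rho$, while a mixed-parity pair contributes $(r,s)$ and $(-r,-s)$. One checks that $\rho\phi(k)=-\phi(\pi(k))$, and then a second application using $\tom=(1,2)(-1,-2)\cdots(2n-1,2n)(-(2n-1),-2n)$ gives $\tom\rho\phi(k)=\phi(\omega\pi(k))$. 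Conjugacy by $\phi$ proves (a). Part~(b) then follows at once from Remark~\ref{remark:product_of_pairings}: since $\omega$ and $\pi$ are pairings, $\#(\omega\pi)=2\#(\omega\vee\pi)$.

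For (c) I transport $\tg^2|_E$ through $\phi$ as well. Since $\tg^2$ acts as $\gamma^2$ on positives and as $\delta\gamma^{-2}\delta$ on negatives, a direct computation yields $\phi^{-1}(\tg^2|_E)\phi=\eta$, with $\eta(k)=\gamma^2(k)$ for $k$ even and $\eta(k)=\gamma^{-2}(k)$ for $k$ odd. The crucial algebraic input is that $\gamma^2$ commutes with $\omega$ (verified on each pair $\{2j-1,2j\}$), which gives $\omega\eta=\gamma^{-1}\omega\gamma=\gamma\omega\gamma^{-1}=:\ols\omega$. Combining the intertwining from (a) with cyclic invariance of the cycle count and the pairing formula once more,
\[
\#(\rho\tom\cdot\tg^2|_E)=\#(\pi\omega\eta)=\#(\pi\ols\omega)=\#(\gamma\pi\gamma^{-1}\cdot\omega)=2\#(\ols\omega\vee\pi).
\]
Adding the identity from (b) gives
\[
\#(\tom\rho|_E)+\#(\rho\tom\cdot\tg^2|_E)=2\bigl(\#(\omega\vee\pi)+\#(\gamma\omega\gamma^{-1}\vee\pi)\bigr),
\]
and the bracketed sum is exactly $n$ because the hypothesis $\epsilon\pi\delta\pi\epsilon\in NC_2^\delta(2n,-2n)$ realizes the equality case of the lemma of Section~\ref{section:expansion_of_the_trace}, which is precisely $\#(\omega\vee\pi)+\#(\gamma\omega\gamma^{-1}\vee\pi)=n$. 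This delivers $2n$ and finishes (c).

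The only step that requires genuine care is the intertwining $\tom\rho\phi=\phi\omega\pi$: four parity cases for $k$ crossed with four for $\pi(k)$ must be handled and the signs tracked consistently through $\epsilon$ and $\delta$. Everything afterwards is cyclic rearrangement of cycle-count expressions together with the combinatorial characterization of the subleading pairings.
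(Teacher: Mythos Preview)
Your proof is correct, but the route differs from the paper's. The paper stays on $[\pm 2n]$: for (a) it uses the paired-cycle decomposition $c_ic_i'$ of $\tom\rho$ (Remark~\ref{remark:product_of_pairings}) to see that exactly half the cycles lie in $E$, and then simplifies $\#(\rho\tom)$ algebraically via $\tom=\epsilon\omega\delta\omega\epsilon$ to get $\#(\rho\tom)=2\#(\omega\pi)$. For (c) the paper argues directly from the \emph{definition} of non-crossing, $\#(\rho\tg)=2n$, splits this as $\#(\rho\tg|_E)+\#(\rho\tg|_O)$, and uses the single observation $\tg|_O=\tom|_O$ (hence $\tg^2|_E=\tom\tg|_E$) to identify the two summands with $\#(\rho\tom\cdot\tg^2|_E)$ and $\#(\rho\tom|_E)$. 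Your approach instead transports everything to $[2n]$ via the explicit bijection $\phi$, proving the conjugacy $\tom\rho|_E=\phi(\omega\pi)\phi^{-1}$ and $\tg^2|_E=\phi\eta\phi^{-1}$ with $\omega\eta=\ols\omega$, and then invokes the subleading equality $\#(\omega\vee\pi)+\#(\gamma\omega\gamma^{-1}\vee\pi)=n$ from the lemma in Section~\ref{section:expansion_of_the_trace}. The paper's (c) is shorter and needs only the definition of $NC_2^\delta(2n,-2n)$; your version has the virtue of making the conjugacy completely explicit (which foreshadows the bijection of Proposition~\ref{prop:annular_bijection}) and of tying (c) back to the genus analysis, so the logical dependence on Section~\ref{section:expansion_of_the_trace} is visible rather than hidden. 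One small remark: your derivation of $\omega\eta=\ols\omega$ is cleanest if you first note $\eta=\omega\gamma^{-1}\omega\gamma$ (a one-line parity check), from which $\omega\eta=\gamma^{-1}\omega\gamma$ is immediate; the commutation of $\gamma^2$ with $\omega$ is then what gives $\gamma^{-1}\omega\gamma=\gamma\omega\gamma^{-1}$.
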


\begin{proof}
When we decompose $\tom\rho = c_1 c_1' \cdots c_k c_k'$ we have, by Remark \ref{remark:parity_reversal}, each cycle is either in $E$ or in $O$. Moreover for each pair $\{ c_i, c_i'\}$, if one is in $E$ then the other must be in $O$.  Also
\begin{eqnarray*}
\#(\rho\tom) 
& = &
\#(\epsilon \pi \delta \pi \epsilon \cdot \epsilon\omega\delta\omega\epsilon ) \\
& = &
\#(\epsilon \omega \delta \omega \pi \delta \pi \epsilon) 
=
\#(\omega \pi \delta \pi\omega  \delta )
=
2\, \#(\omega \pi). 
\end{eqnarray*}
Putting these together we get $\#(\omega \pi) = 1/2 \#(\rho\tom) = \#(\rho\tom|_E)$. Finally we have $\#(\omega \vee \pi) = 1/2 \, \#(\omega\pi) = 1/2\, \#(\rho\tom|_E)$. Thus $\#(\rho\tom|_E) = \#(\rho\tom|_O) = \frac{1}{2} \#(\rho\tom)$. This proves $(a)$ and $(b)$.

Because we assumed that $\rho \in NC_2^\delta(2n , -2n)$ we have $2n = \#(\rho\tg) = \#(\rho\tg|_E) + \#(\rho\tg|_O)$. Since $\tg|_O = \tom|_O$, we then have $\#(\rho\tg|_O) = \#(\rho\tom|_O) = \#(\rho\tom|_E)$.  Again as $\tg|_O = \tom|_O$, we have $\tg^2|_E = \tom \tg |_E$, thus $\rho \tom\cdot \tg^2|_E = \rho\tom \cdot \tom \tg|_E = \rho\tg|_E$. Hence
\begin{eqnarray*}
2 n 
& = &
\#(\rho\tg|_O) + \#(\rho \tg|_E)\\
& = &
\#(\rho\tom|_E) + \#(\rho\tom \cdot \tg^2|_E).
\end{eqnarray*}
This proves $(c)$. 
\end{proof}

\begin{figure}
\includegraphics{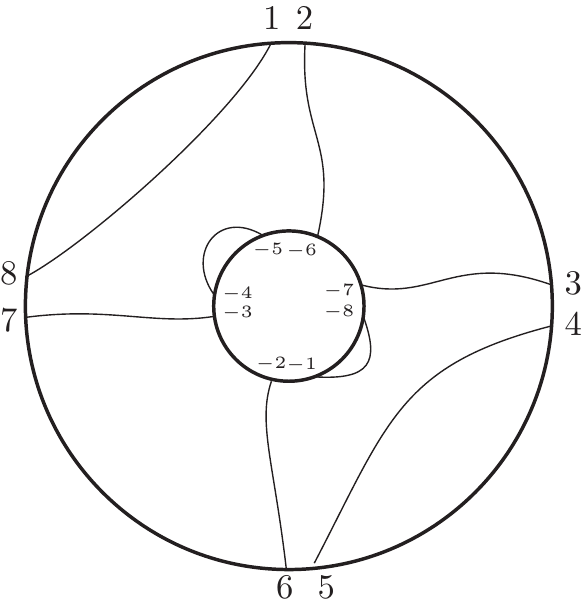} \hfill
\includegraphics{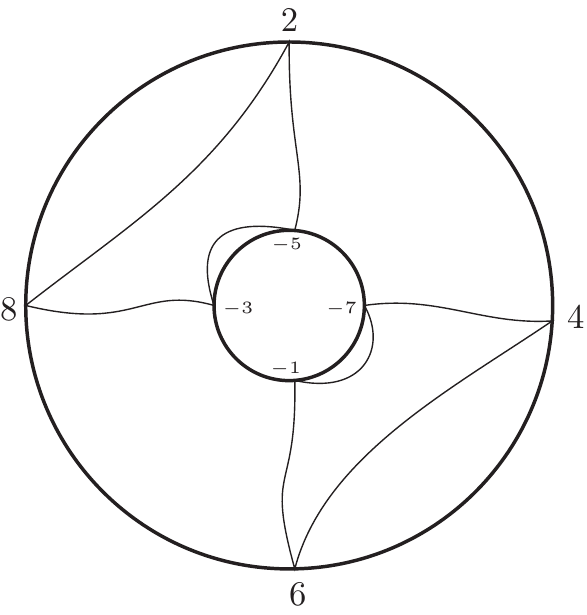}
\caption{\label{fig:permutation_to_permutations}{\small On the left we have $\rho \in NC_2^\delta(8, -8)$ and on the right we have $\sigma = \tilde\omega \rho|_E$. This gives the bijection demonstrated in the proof of Proposition \ref{prop:annular_bijection}. From a topological point of view we squeeze together the pairs of $\tilde\omega = \ab(1, 2) (3, 4) (5, 6) (7,8)\ab (-1, -2) (-3, -4)\ab (-5, -6) \ab(-7, -8)$ to produce the points of $E\ab  = \ab \{2,\ab 4, 6, 8, \ab -1, \ab -3, -5, -7\}$. This gives the embedding of $S_{NC}^\delta(n, \ab-n)$ into $NC_2^\delta(2n ,-2n)$. 
}}
\end{figure}

\begin{proposition}\label{prop:annular_bijection}
\[
\mathop{\sum_{\pi \in \cP_2(2n)}}_{\epsilon\pi\delta\pi\epsilon
\in NC_2^\delta(2n , -2n)}
c^{\#(\omega \vee \pi)}
=
\sum_{\sigma \in S_{NC}^\delta(n, -n)} c^{\#(\sigma)/2}.
\]
\end{proposition}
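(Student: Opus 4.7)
The plan is to exhibit an explicit weight-preserving bijection
\[
\Phi\colon \{\pi\in\cP_2(2n)\mid \epsilon\pi\delta\pi\epsilon\in NC_2^\delta(2n,-2n)\}\ \longrightarrow\ S_{NC}^\delta(n,-n),
\]
mimicking at the annular level the disc bijection $NC_2(2n)\to NC(n)$ of Remark \ref{remark:pairings_to_permutations}. Given such a $\pi$, set $\rho=\epsilon\pi\delta\pi\epsilon$ and define
\[
\sigma_\pi := \tom\,\rho\,\big|_E,
\]
which is well defined as a permutation of $E$ by Remark \ref{remark:parity_reversal}. To identify $E$ with $[n]\cup[-n]$, I use the ``squeezing'' map that collapses each $\tom$-pair: $\{2k-1,2k\}\mapsto k$ on the positive side and $\{-(2k-1),-2k\}\mapsto -k$ on the negative side. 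A short check shows this identification sends the induced involution of $\delta$ on $E$ to $\delta$ on $[n]\cup[-n]$, and sends $\tg_{2n}^{\,2}|_E$ to $\tg_n$ (the two cycles of $\tg_{2n}^{\,2}|_E$ become exactly the outer and inner cycles of the $(n,-n)$-annulus).

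Now I verify that $\sigma_\pi\in S_{NC}^\delta(n,-n)$. Since $\rho$ and $\tom$ are involutions, $\sigma_\pi^{-1}=\rho\,\tom|_E$; Lemma \ref{lemma:the_restrictions}(c) then rewrites as
\[
\#(\sigma_\pi)+\#(\sigma_\pi^{-1}\tg_n)=2n,
\]
which, together with $\#(\tg_n)=2$, is exactly the genus-$0$ case of the geodesic equation (\ref{eq:geodesic_equation}) applied to $\sigma_\pi$ relative to $\tg_n$. Transitivity of $\langle\sigma_\pi,\tg_n\rangle$ on $[n]\cup[-n]$ follows from the existence of at least one through string of $\rho$ (Lemma \ref{lemma.structure.through.pairings}), so $\sigma_\pi\in S_{NC}(n,-n)$. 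For the symmetry condition, the identities $\rho=\delta\rho\delta$ and $\tom=\delta\tom\delta$ (both commute with $\delta$) yield $\delta\sigma_\pi\delta^{-1}=\sigma_\pi^{-1}$, and then the parity statements $(c)$--$(e)$ of Lemma \ref{lemma.structure.through.pairings} ensure no cycle of $\sigma_\pi$ contains both $k$ and $-k$; together these give that $\sigma_\pi\delta$ is a pairing.

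The weight identity $\#(\sigma_\pi)/2=\#(\omega\vee\pi)$ is immediate from Lemma \ref{lemma:the_restrictions}(b), so $\Phi$ is weight preserving. For bijectivity I construct $\Phi^{-1}$ directly: given $\sigma\in S_{NC}^\delta(n,-n)$, lift it to a permutation of $E\subset[\pm 2n]$ and define $\rho|_E:=\tom\sigma$ (a map $E\to O$ since $\tom$ swaps $E,O$); extend to all of $[\pm 2n]$ by declaring $\rho|_O:=\delta\,\rho|_E\,\delta$. The resulting $\rho$ is an involution commuting with $\delta$, hence equals $\epsilon\pi\delta\pi\epsilon$ for a unique pairing $\pi$ of $[2n]$ (reading off the transpositions of $\rho$ and inverting the conjugation by $\epsilon$ and $\delta$). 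Reversing the counts in Lemma \ref{lemma:the_restrictions}(c) shows $\rho\in NC_2^\delta(2n,-2n)$, and $\Phi\circ\Phi^{-1}=\mathrm{id}$ is immediate. Summing $c^{\#(\sigma)/2}$ over $S_{NC}^\delta(n,-n)$ therefore equals the left-hand side, proving the proposition.

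The main obstacle is the bookkeeping in Step 2: checking that the squeezing identification genuinely intertwines $(\tom\delta)$, $(\tg^2)$ on $E$ with $(\delta)$, $(\tg_n)$ on $[n]\cup[-n]$, and verifying that the recovered $\rho$ from $\sigma$ really is a non-crossing annular pairing of $(2n,-2n)$ whose ``symmetric'' pairs come in the two flavors $(r,s)$, $(-r,-s)$ or $(r,-s)$, $(-r,s)$ described in conditions $(iii)$ and $(iv)$ preceding the proposition. Once those compatibilities are confirmed, the rest is algebra from Lemma \ref{lemma:the_restrictions}.
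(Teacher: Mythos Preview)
Your overall strategy is exactly the paper's: set $\sigma=\tom\rho|_E$, use Lemma~\ref{lemma:the_restrictions} for the weight and the genus-zero count, and identify $E$ with $[\pm n]$ via the squeezing map. Two of your intermediate claims, however, are wrong as written and are precisely the places where the paper has to do real work.

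First, the deduction ``$\rho=\delta\rho\delta$ and $\tom=\delta\tom\delta$ yield $\delta\sigma_\pi\delta^{-1}=\sigma_\pi^{-1}$'' is incorrect: commuting with $\delta$ gives $\delta\sigma_\pi\delta=\sigma_\pi$, not its inverse. The involution on $E$ that corresponds to $\delta$ on $[\pm n]$ is not $\delta|_{[\pm 2n]}$ (which does not even preserve $E$) but $\delta_E:=\tom\delta|_E=\epsilon\tom\epsilon|_E$, sending $2k\leftrightarrow -(2k-1)$. With this correction one gets $\delta_E\sigma_\pi\delta_E=\tom\delta\,\tom\rho\,\tom\delta=\rho\tom=\sigma_\pi^{-1}$, and the paper then checks separately that $\sigma_\pi\delta_E$ has no fixed points by exhibiting $\tom\rho\cdot\epsilon\tom\epsilon$ as a conjugate of a pairing. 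Your appeal to the parity statements of Lemma~\ref{lemma.structure.through.pairings} does not by itself rule out fixed points of $\sigma_\pi\delta_E$ coming from non-through blocks.

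Second, ``$\rho$ is an involution commuting with $\delta$, hence equals $\epsilon\pi\delta\pi\epsilon$ for a unique pairing $\pi$'' is false in general (e.g.\ $\rho=\delta$ itself, or any $\rho$ pairing $1$ with $2$). What is actually needed is $\epsilon\rho\epsilon(k)<0$ for every $k>0$, and the paper verifies this case by case from the explicit formula $\rho=\sigma^{-1}\tom\sigma$ together with the parity constraints built into $E$. You flag this as ``bookkeeping'' at the end, but it is not automatic from the symmetry alone; it uses that $\sigma$ maps positive-even to positive-even or negative-odd, and similarly for negative-odd inputs. Once you replace $\delta$ by $\delta_E$ in the symmetry argument and supply this parity check for the inverse map, your proof becomes the paper's.
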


\begin{proof}
Given $\pi \in \cP_2(2n)$ such that $\rho = \epsilon \pi \delta \pi \epsilon \in NC_2^\delta(2n - 2n)$, let $\sigma = \tilde \omega \rho|_E$. We have shown, Lemma \ref{lemma:the_restrictions} $(b)$,  that $\#(\omega \vee \pi) = \#(\sigma)/2$.

Recall that $\tg^2(E) = E$, so let $\gamma_E = \tg^2|_E$. By Lemma \ref{lemma:the_restrictions} $(c)$ we have that $\#(\sigma) + \#(\sigma^{-1} \gamma_E) = 2n$. With our notation
\[
\gamma_E 
=
(2, 4, 6, \dots, 2n -2, 2n)
(-(2n -1), -(2n - 3), \dots, -3, -1).
\]
Let $\delta_E$ be the pairing of $E$ given by $\delta_E(2 k) = \tilde\omega\delta(2k) = -(2 k - 1)$. For the purposes of this proof we shall use the following notation. Let $S_{NC}^\delta(E)$ be the set of permutations, $\sigma$, of $E$ such that $\#(\sigma) + \#(\sigma^{-1} \gamma_E) = 2n$, $\sigma \delta_E$ is a pairing, and $\sigma$ connects the two cycles of $\gamma_E$. We shall show that the map $\pi \mapsto \sigma$ is a bijection from $\{ \pi \in \cP_2(2n) \mid \epsilon \pi \delta \pi \epsilon \in NC_2^\delta(2n, -2n) \}$ to $S_{NC}^\delta(E)$. 

We shall begin by showing that $\sigma$ connects the cycles of $\gamma_E$. Let $(r, -s)$ be a through cycle of $\rho$. Suppose first that $r$ is even; then by Lemma \ref{lemma.structure.through.pairings} $(c)$, $s$ is also even and so $-(s-1) \in E$. Moreover $\sigma(r) = \tilde\omega(-s) = -(s - 1)$. Thus in this case $\sigma$ connects the two cycles of $\gamma_E$. Next, let us assume that $r$ is odd. Then again by Lemma \ref{lemma.structure.through.pairings} $(c)$, we have $r +1, - s \in E$. Moreover $\sigma(-s) = \tilde\omega(r) = r + 1$. Thus again, $\sigma$ connects the two cycles of $\gamma_E$. We also have
\begin{itemize}
\item
$\delta_E = \epsilon \tilde\omega \epsilon|_E$, and

\item
$\sigma = \epsilon \omega \pi \delta \omega \pi \epsilon|_E$.

\end{itemize}
As 
\[
\tilde\omega \rho \cdot \epsilon \tilde\omega \tilde \epsilon
=
\epsilon \cdot \omega \pi \omega \cdot \delta   \omega \pi \omega \delta \cdot \epsilon
\]
is a pairing, we have that $\sigma\delta_E$ is a pairing. By Lemma \ref{lemma:the_restrictions} we have $\#(\sigma) + \#(\sigma^{-1} \gamma_E) = 2n$, so $\sigma \in S_{NC}^\delta(E)$. 

Conversely let $\sigma \in S_{NC}^\delta(E)$ be given and let $\rho = \sigma^{-1} \tilde \omega \sigma$. As we have conjugated the pairing $\tilde \omega$, $\rho$ is a pairing such that $\tilde \omega \rho|_E = \tilde \omega  \sigma^{-1} \tilde \omega \sigma|_E = \sigma$.   As $\sigma$ connects the cycles of $\gamma_E$, $\rho$ connects the cycles of $\tg$. Next we shall show that $\rho\tg|_E = \sigma^{-1}\gamma_E$ and $\rho\tg|_O = \tilde \omega\sigma \tilde \omega $. This will show that $\#(\rho \tg) = 2n$, so $\rho \in NC_2^\delta(2n, -2n)$. For $k > 0$ we have 
\[
\rho\tg(2k) = \sigma^{-1} \tilde \omega \sigma(2 k + 1) = \sigma^{-1} \tilde \omega(2 k + 1) = \sigma^{-1}(2 k + 1) = \sigma^{-1}\gamma_E(2k),
\]
and 
\begin{align*}
\rho\tg(-(2k-1)) & = \sigma^{-1} \tilde \omega \sigma(-2 k) = \sigma^{-1} \tilde \omega(-2 k) = \sigma^{-1}(-(2 k + 1)) \\
& = \sigma^{-1}\gamma_E(-(2k-1)). 
\end{align*}
This shows that $\rho\tg|_E = \sigma^{-1}\gamma_E$. Again for $k > 0$
\[
\rho\tg(2k-1) = \sigma^{-1} \tilde \omega \sigma(2 k) = \tilde \omega \sigma(2 k) = \tilde \omega \sigma \tilde \omega(2 k -1),
\]
and
\[
\rho\tg(-2k) = \sigma^{-1} \tilde \omega \sigma(-2 k - 1)
= \tilde \omega\sigma(-2 k -1)  = \tilde \omega\sigma\tilde \omega(-2k).
\]

Finally let us show that $\rho = \epsilon \pi \delta \pi \epsilon$ for some pairing $\pi$; or equivalently that for $k > 0$ we have $\epsilon \rho \epsilon(k) < 0$. For $ k > 0$ we have 
\[
\epsilon \rho \epsilon(2k)
=
\epsilon \sigma^{-1} \tilde \omega \sigma (-2k)
=
\epsilon \sigma^{-1} \tilde \omega (-2k)
=
\epsilon \sigma^{-1}(-2k -1) < 0
\]
because if $\sigma^{-1}(-2k -1) < 0$ it must be odd and if $\sigma^{-1}(-2k -1) > 0$ it must be even. Also
\[
\epsilon \rho \epsilon(2k-1)
=
\epsilon \sigma^{-1} \tilde \omega \sigma (2k-1)
=
\epsilon \sigma^{-1} \tilde \omega (2k-1)
=
\epsilon \sigma^{-1}(2k) < 0
\]
because if $\sigma^{-1}(2k) < 0$ it must be odd and if $\sigma^{-1}(2k) > 0$ it must be even. Hence $\rho = \epsilon \pi \delta \pi \epsilon$ where $\pi = \delta \epsilon \rho \epsilon |_{[2n]}$. 
This completes the proof of the claimed bijection. 

If we let $\psi: E \rightarrow [\pm n]$ be the bijection $\psi(2k) = k$ and $\psi(-(2k-1)) = -k$, for $k \in [n]$, we see that $\psi$ conjugates $\gamma_E$ to $\gamma' = (1, \dots, n)(-n, \dots, -1)$, and $\delta_E$ to $\delta|_{[\pm n]}$ and $\sigma$ to a permutation $\sigma' \in S_{\pm n}$ such that
\begin{itemize}

\item
$\sigma'\delta$ is a pairing;

\item
$\#(\sigma') + \#(\sigma'^{-1} \gamma')  = 2n$, and

\item
$\sigma'$ connects the two cycles of $\gamma'$

\end{itemize}
Thus $\sigma' \in S_{NC}^\delta(n, -n)$. Hence
\[
\mathop{\sum_{\pi \in \cP_2(2n)}}_{\epsilon\pi\delta\pi\epsilon
\in NC_2^\delta(2n , -2n)}
c^{\#(\omega \vee \pi)}
=
\sum_{\sigma \in S_{NC}^\delta(n, -n)} c^{\#(\sigma)/2}.
\]
\end{proof}

\begin{remark}
The proof of Proposition \ref{prop:annular_bijection} shows that $|S_{NC}^\delta(n, -n)| = |\{ \rho \in NC_2^\delta(2n -2n) \mid \epsilon \rho\epsilon = \pi \delta \pi$  for some pairing  $\pi \in \cP_2(2n)\}|$. We shall show, in Corollary \ref{cor:exact_formula}, that $|S_{NC}^\delta(n, -n)| = 4^{n-1} - \frac{1}{2} \binom{2n}{n}$, whereas in \cite[Lemma 23]{m}, we showed that $|NC_2^\delta(2n ,-2n)| = \frac{1}{2}( 4^n - \binom{2n}{n})$.  Thus
\[
|NC_2^\delta(2n , -2n)| = 4^{n-1} + |S_{NC}^\delta(n, -n)|.
\]
 
\end{remark}

The conclusions of Theorem \ref{thm:first_main_claim} and Proposition \ref{prop:annular_bijection} give us the following corollary.

\begin{corollary}\label{cor:multi_matrix_moment_theorem}
\begin{eqnarray}\label{eq:multi_matrix_theorem}\lefteqn{
m'_n = \lim_{N \rightarrow \infty}
N\big( \E(\tr(X^n)) - \sum_{\mathclap{\pi \in NC(n)}} c^{\#(\pi)} \big)}\notag \\ 
&= & 
\sum_{\mathclap{\pi \in NC(n)}} c' \#(\pi) c^{\#(\pi)-1}
+ \
\sum_{\mathclap{\sigma \in S_{NC}^\delta(n, -n)}} c^{\#(\sigma)/2}.
\end{eqnarray}

\end{corollary}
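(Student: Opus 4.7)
The plan is very short: the corollary is essentially a bookkeeping statement that juxtaposes Theorem \ref{thm:first_main_claim} with Proposition \ref{prop:annular_bijection}, so I would simply invoke both and check that the indexing sets and weights match up.

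First I would recall the conclusion of Theorem \ref{thm:first_main_claim}, which already identifies $m'_n$ as the sum of the shape term $\sum_{\pi \in NC(n)} c' \#(\pi) c^{\#(\pi)-1}$ plus the pairing sum
\[
\mathop{\sum_{\pi \in \cP_2(2n)}}_{\epsilon\pi\delta\pi\epsilon \in NC_2^\delta(2n,-2n)} c^{\#(\omega \vee \pi)}.
\]
So nothing further needs to be done with the shape term; it is already in the form claimed in \eqref{eq:multi_matrix_theorem}.

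Next I would apply Proposition \ref{prop:annular_bijection} directly to the second summand. That proposition establishes the identity
\[
\mathop{\sum_{\pi \in \cP_2(2n)}}_{\epsilon\pi\delta\pi\epsilon \in NC_2^\delta(2n,-2n)} c^{\#(\omega \vee \pi)} \;=\; \sum_{\sigma \in S_{NC}^\delta(n,-n)} c^{\#(\sigma)/2},
\]
via the bijection $\pi \mapsto \sigma = \tilde\omega \rho|_E$ (with $\rho = \epsilon\pi\delta\pi\epsilon$), under which $\#(\omega \vee \pi) = \#(\sigma)/2$ by Lemma \ref{lemma:the_restrictions}(b). Substituting this into the expression from Theorem \ref{thm:first_main_claim} yields precisely \eqref{eq:multi_matrix_theorem}.

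There is no genuine obstacle here; all the analytic and combinatorial work has been done in Lemma \ref{lemma:1}, the genus/parity lemmas preceding Theorem \ref{thm:first_main_claim}, and the bijection of Proposition \ref{prop:annular_bijection}. The proof of the corollary is therefore a one-line assembly, and I would present it as such, perhaps noting that the relabelling $\psi: E \to [\pm n]$ used in the proof of Proposition \ref{prop:annular_bijection} is what justifies writing the second sum over $S_{NC}^\delta(n,-n)$ rather than over $S_{NC}^\delta(E)$.
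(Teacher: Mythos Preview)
Your proposal is correct and matches the paper's own treatment exactly: the paper simply states that the corollary follows from combining Theorem~\ref{thm:first_main_claim} with Proposition~\ref{prop:annular_bijection}, and you have spelled out precisely that substitution. There is nothing to add.
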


\begin{proposition}\label{prop:counting_diagrams}
	\begin{align*}
		\sum_{\mathclap{\sigma \in S_{NC}^\delta(n, -n)}} 
		  c^{\#(\sigma)/2} 
	=
		\sum_{l = 1}^{\floor{n/2}} \sum_{j = 0}^{n-2l} \binom{n}{j} \binom{n}{j+2l} c^{j+l}  
	=
 		\sum_{k=1}^{n-1} a_{k} c^{k}
	\end{align*}
where $a_{k} = \ds\sum_{i=1}^{\mathclap{\min \{k,n-k\}}} \quad \binom{n}{k-i}\binom{n}{k+i} =\frac{1}{2}\left( \binom{2n}{2k} - \binom{n}{k} \binom{n}{k} \right) $. 
\end{proposition}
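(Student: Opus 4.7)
\emph{Plan.} The proposition contains two equalities; the second is a pure Vandermonde computation, while the first requires a combinatorial counting of $S_{NC}^\delta(n,-n)$ organized by cycle-pair count. \emph{For the second,} start from Vandermonde's convolution $\binom{2n}{2k} = \sum_{j=0}^{2k}\binom{n}{j}\binom{n}{2k-j}$ and substitute $i = k - j$. The $i = 0$ summand contributes $\binom{n}{k}^2$, and by symmetry the contributions for $i$ and $-i$ coincide; with the convention $\binom{n}{r} = 0$ for $r \notin \{0,\dots,n\}$, the index range truncates to $1 \le i \le \min\{k, n-k\}$. This gives $\binom{2n}{2k} = \binom{n}{k}^2 + 2 a_k$, which rearranges to the claimed closed form for $a_k$.

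\medskip\noindent\emph{For the first equality,} use Proposition \ref{prop:annular_bijection} to rewrite the left-hand side as $\sum_\pi c^{\#(\omega \vee \pi)}$ over pairings $\pi \in \cP_2(2n)$ with $\epsilon\pi\delta\pi\epsilon \in NC_2^\delta(2n,-2n)$. By the construction following Lemma \ref{lemma.structure.through.pairings}, such a $\pi$ is specified by the positions of $4l$ ``through points'' in $[2n]$ (which alternate in parity, with even-length gaps between them) paired into $2l$ same-parity through pairs, together with an independent non-crossing pairing on each gap; the integer $l \ge 1$ then equals the number of mirror-pairs of through cycles in the corresponding $\sigma$. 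The goal is to show that for each fixed $l$ and $j$, the number of valid configurations with $\#(\omega \vee \pi) = j + l$ is exactly $\binom{n}{j}\binom{n}{j+2l}$; summing over $j$ and $l$ then produces the first equality, and the second equality (already proved) recasts it in closed form.

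\medskip\noindent\emph{Main obstacle.} The identification of these counts as $\binom{n}{j}\binom{n}{j+2l}$ is the heart of the argument. The natural route is an explicit bijection between valid configurations and ordered pairs of subsets $(A, B) \subset [n]^2$ with $|A| = j$ and $|B| = j + 2l$, where one subset encodes the inner-cycle support on a single circle while the other records the interleaved parity pattern of the through points (equivalently, the placements of the gap intervals $I_j$ together with their Narayana-type contributions to $\#(\omega\vee\pi)$). Verifying that this is a well-defined bijection will rely on the parity-alternation and spoke structure from parts (c)--(e) of Lemma \ref{lemma.structure.through.pairings}, and may be most cleanly carried out inductively via a ``pinching'' decomposition analogous to the one used for the recursion of $\ols{m}'_n$ in the introduction, with the two binomials arising as the separate enumerations of the odd-position and even-position through points, modulated by the Narayana contributions of the intermediate non-crossing pairings.
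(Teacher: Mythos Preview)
Your treatment of the second equality (the closed form for $a_k$ via Vandermonde) is correct and essentially identical to the paper's argument.

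For the first equality, however, there is a genuine gap. You correctly identify the key step---showing that the number of $\sigma$ with $2j$ non-through cycles and $2l$ through cycles equals $\binom{n}{j}\binom{n}{j+2l}$---but you do not actually carry it out. Your proposed bijection to pairs of subsets $(A,B)$ is described only in outline (``one subset encodes the inner-cycle support \dots\ the other records the interleaved parity pattern''), and the suggestion that it ``may be most cleanly carried out inductively via a pinching decomposition'' is a hope, not an argument. Moreover, by routing through Proposition~\ref{prop:annular_bijection} back to pairings $\pi\in\cP_2(2n)$, you make the bookkeeping harder: you now have to track how the parameters $j$ and $l$ (which are defined in terms of cycles of $\sigma$) translate into properties of $\pi$, and how the Narayana-type contributions of the gap pairings interact with $\#(\omega\vee\pi)$. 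None of this is worked out.

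The paper takes a shorter and quite different route. It stays on the $\sigma$ side and observes that the symmetry $\delta\sigma\delta=\sigma^{-1}$ means $\sigma$ is completely determined by its restriction to $[n]$; this restriction is a \emph{non-crossing circular half-permutation} in the sense of \cite[\S6, Def.~2]{kms}, with $j$ closed blocks (coming from the non-through cycle pairs) and $2l$ open blocks (coming from the through cycle pairs). The enumeration of such half-permutations by $\binom{n}{j}\binom{n}{j+2l}$ is then quoted directly from \cite[Theorem~25]{kms}. So the paper outsources exactly the step you flagged as the main obstacle. If you want a self-contained argument, you would need to reproduce that enumeration; your sketch does not yet do so.
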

\begin{figure}
\includegraphics{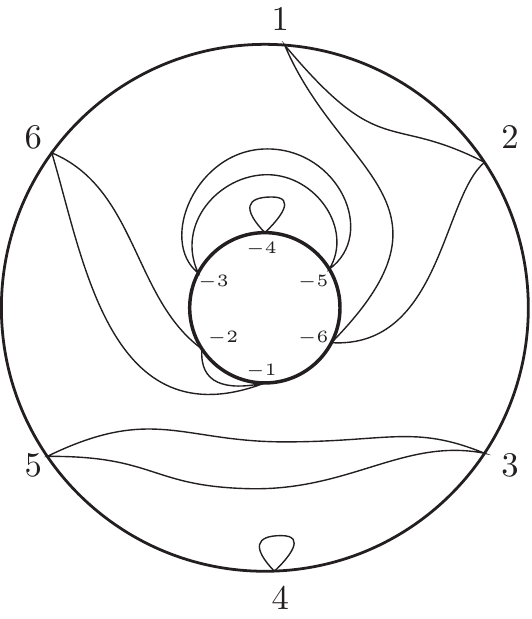} \hfill
\includegraphics{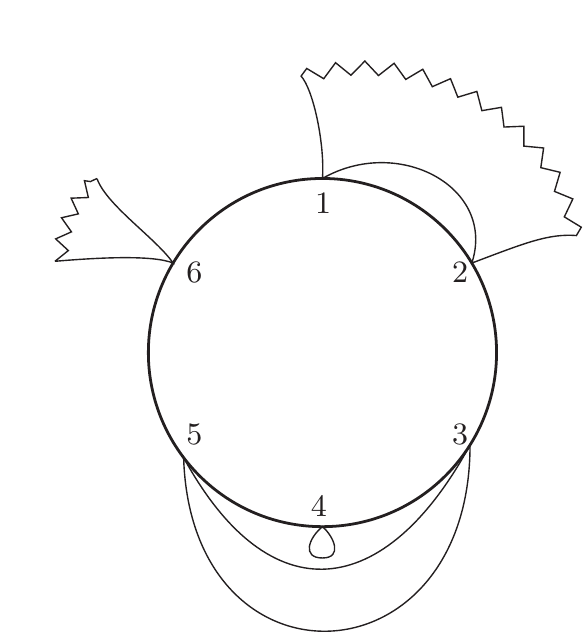}
\caption{\label{fig:half_permutations}{\small On the left we have an element of $S_{NC}^\delta(6, -6)$. We convert it to a \textit{non-crossing circular half-permutation} (in the language of \cite[\S 6 Def.~2]{kms})}, which is shown on the right. Then \textit{open} blocks are those with a zigzag edge; the \textit{closed} blocks have a smooth edge. We will always get an even number of open blocks. See \cite[\S 6 Def.~2]{kms} for details. We shall use this in the proof of Proposition \ref{prop:counting_diagrams}.}
\end{figure}
\begin{proof}
	The condition $\delta \sigma \delta = \sigma^{-1}$ for any element  $\sigma \in S_{NC}^\delta(n, -n)$ implies that the number of through cycles and the number of non-through cycles in in $\sigma$ are both even numbers. 
	Thus, since each $\sigma \in S_{NC}^\delta(n, -n)$ must have at least two through cycles, the sum $	\sum_{\sigma \in S_{NC}^\delta(n, -n)} c^{\#(\sigma)/2} $ can be rewritten as 
	\[
	\sum_{l = 1}^{\floor{n/2}} \sum_{j = 0}^{n-2l}
	 	\abs{ S_{NC}^\delta(n, -n)_{j,l}} c^{j+l} 
	\]
	where $S_{NC}^\delta(n, -n)_{j,l}$ denotes the set of all permutations from $S_{NC}^\delta(n, -n)$ with exactly $2j$ non-through cycles and $2l$  through cycles. 
	The condition $\delta \sigma \delta = \sigma^{-1}$ also implies that each permutation $\sigma \in S_{NC}^\delta(n, -n)_{j,l}$ is completely and uniquely determined by the non-crossing circular half-permutation with $j$ closed blocks and $2l$ open blocks resulting from  restricting $\sigma$ to the set $[n]$, see Figure \ref{fig:half_permutations}. 
	It then follows from \cite[Theorem 25]{kms} that $\abs{ S_{NC}^\delta(n, -n)_{j,l}}$ is given by $\binom{n}{j} \binom{n}{j+2l}$, and hence, we obtain 
	\[
	\sum_{\sigma \in S_{NC}^\delta(n, -n)} c^{\#(\sigma)/2} 
	=
	\sum_{l = 1}^{\floor{n/2}} \sum_{j = 0}^{n-2l} \binom{n}{j} \binom{n}{j+2l} c^{j+l}. 
	\]
See \cite[Figure 16]{kms} for an explicit example of how the counting works.
Applying the change of variable $k = j + l$ in the sum in the right-hand side of the equality above, and regrouping its terms with respect to $c^k$, yields  
	\[
	\sum_{l = 1}^{\floor{n/2}} \sum_{j = 0}^{n-2l} \binom{n}{j} \binom{n}{j+2l} c^{j+l} 
	=
	\sum_{k=1}^{n-1} c^{k} \sum_{l=1}^{\min \{k,n-k\}} \binom{n}{k-l}\binom{n}{k+l}. 
	\] 
Note that the sum $\sum_{l=1}^{\min \{k,n-k\}} \binom{n}{k-l}\binom{n}{k+l}$ is invariant under the transformation $k' = n -k$, so in evaluating the sum we may assume that $k \leq n - k$. Then we have
\begin{align*}\lefteqn{
\sum_{l=1}^k \binom{n}{k-l}\binom{n}{k+l}
\stackrel{(k-l) \mapsto l}{=}
\sum_{l=0}^{k-1}\binom{n}{l}\binom{n}{2k-l} } \\
&= \frac{1}{2} \Big(
\sum_{l=0}^{k-1}\binom{n}{l}\binom{n}{2k-l}
+
\sum_{l=k+1}^{2k} \binom{n}{2k-l}\binom{n}{l} \Big) \\
& =
\frac{1}{2} \Big( \sum_{l=0}^{2k}\binom{n}{l}\binom{n}{2k-l} -
\binom{n}{k}^2 \Big) 
=
\frac{1}{2} \Big( \binom{2n}{2k} - \binom{n}{k}^2 \Big)
\end{align*}
where the last equality is by Vandermonde convolution (see e.g. \cite[Eq. (5.22)]{gkp}).

\end{proof}

\section{The case of independent Wishart matrices}
\label{section:independent_wishart_matrices}

In this section we extend Corollary \ref{cor:multi_matrix_moment_theorem} to the case of a family of independent Wishart matrices. The conclusion is exactly what happens at the first order; the blocks of $\pi$, in the first term, or $\sigma$ in the second can only connect a matrix with itself, see \cite[Cor. 9.4]{mn}. 

So to this end let $X_{1, N}, \dots, X_{s, N}$ be $s$ independent Wishart matrices as in Definition \ref{def:real_wishart}. Let $l_1, \dots, l_n \in [s]$ and $\ker(l) \in \cP(n)$ be the kernel of $l$. We know from \cite[Cor. 9.4]{mn} that 
\begin{equation*}
\lim_N \E(\tr(X_{l_1, N} \cdots X_{l_n, N}))
=
\mathop{\sum_{\pi \in NC(n)}}_{\pi \leq \ker(l)} c^{\#(\pi)}. 
\end{equation*}
Given any partition $\tau$ of $[n]$ we get a partition $\tilde\tau$ of $[\pm n]$ by setting, for $r, s \in [\pm n]$, $r \sim_{\tilde\tau} s$ if and only if $|r| \sim_\tau |s|$. Here, $|r|$ denotes the absolute value of $r$. 

We now turn to the multi-matrix version of Equation (\ref{equation:1}) of Lemma \ref{lemma:1}. Given our $l_1, \dots, l_n \in [s]$, let $k_1, k_2, \dots, k_{2n-1}, k_{2n} \in [s]$ be given by $k_{2r-1} = k_{2r} = l_r$ for $1 \leq r \leq n$. Then $\ker(k) \in \cP(2n)$. By repeating the proof of Lemma \ref{lemma:1} we have
\begin{equation}\label{equation:2}
\E(\tr(X_{l_1,N} \cdots X_{l_n, N})) =  \,
\mathop{\sum_{\mathclap{\pi \in \cP_2(2n)}}}_{\mathclap{\pi \leq \ker(k)}} 
\ \ \Big(\frac{M}{N}\Big)%
^{\#(\omega  \vee  \pi )}
N^{\#(\gamma \omega \gamma^{-1} \vee \pi) + \#(\omega  \vee  \pi ) -(n+1)}.
\end{equation}

The following Lemma will sort out which $\sigma$'s can appear in the second term on the right hand side of Equation (\ref{eq:multi_matrix_theorem}).

\begin{lemma}\label{lemma:kernel_equivalence}
Suppose $\pi \in \cP_2(2n)$ and $\rho = \epsilon \pi \delta \pi \epsilon \in NC_2^\delta(2n ,-2n)$. Suppose in addition that $\sigma \in S_{NC}^\delta(n, -n)$ is the permutation produced in the bijection of Proposition \ref{prop:annular_bijection}. Then
\[
\sigma \leq \widetilde{\ker(l)} \Leftrightarrow \pi \leq \ker(k).
\]
\end{lemma}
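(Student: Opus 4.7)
The plan is to reduce the equivalence to a pair-by-pair check, using the explicit form of the bijection $\pi \mapsto \sigma$ described in the proof of Proposition \ref{prop:annular_bijection}.

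First I would unpack the two kernel conditions. Since $k_{2r-1} = k_{2r} = l_r$ by construction, the partition $\ker(k)$ on $[2n]$ satisfies $k_a = k_b \Leftrightarrow l_{\lceil a/2 \rceil} = l_{\lceil b/2 \rceil}$, so $\pi \leq \ker(k)$ is equivalent to saying that $l_{\lceil a/2 \rceil} = l_{\lceil b/2 \rceil}$ for every $(a,b) \in \pi$. On the other side, the definition of $\widetilde{\ker(l)}$ gives that $\sigma \leq \widetilde{\ker(l)}$ is equivalent to $l_{|r|} = l_{|\sigma(r)|}$ for every $r \in [\pm n]$. So the goal becomes matching the pairs of $\pi$ with the arrows $r \mapsto \sigma(r)$ and showing the two conditions coincide.

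Next I would compute $\sigma$ explicitly, pair-by-pair. Recall $\sigma = \psi \circ (\tilde\omega \rho|_E) \circ \psi^{-1}$ with $\rho = \epsilon \pi \delta \pi \epsilon$ and $\psi(2k) = k$, $\psi(-(2k-1)) = -k$. A direct unwinding (using that $\pi$ acts trivially on negatives in our convention) shows: if $(2k,b) \in \pi$, then $\rho(2k) = \epsilon_{|b|} b$ and then $\sigma'(k) := \psi(\tilde\omega(\epsilon_{|b|} b))$ equals $-b/2$ when $b$ is even and $(b+1)/2$ when $b$ is odd; similarly, if $(2k-1,m) \in \pi$, then $\sigma'(-k) = (m+1)/2$ for $m$ odd and $-m/2$ for $m$ even. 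In all four parity cases one reads off the key identity
\[
|\sigma(\psi(\text{element of $E$ coming from $a$}))| \;=\; \lceil b/2 \rceil
\quad\text{for every } (a,b) \in \pi,
\]
with the roles of $a$ and $b$ reversible, consistently with the $\delta$-symmetry $\delta \sigma \delta = \sigma^{-1}$.

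Finally I would combine the two steps. For each $r \in [\pm n]$ the value $\sigma(r)$ is determined by a unique pair of $\pi$ — the one containing $2r$ if $r > 0$ and the one containing $2|r|-1$ if $r < 0$ — and by the explicit computation above the condition $l_{|r|} = l_{|\sigma(r)|}$ is literally the condition $l_{\lceil a/2 \rceil} = l_{\lceil b/2 \rceil}$ for that pair. Conversely, each pair $(a,b) \in \pi$ is responsible for the arrows out of exactly two elements of $[\pm n]$ (one from $a$ and one from $b$), and the condition imposed on both arrows is the same condition on the pair. Hence $l_{|r|} = l_{|\sigma(r)|}$ for all $r \in [\pm n]$ if and only if $l_{\lceil a/2 \rceil} = l_{\lceil b/2 \rceil}$ for all $(a,b) \in \pi$, giving the desired equivalence.

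The main obstacle is purely bookkeeping in the second step: one must verify the parity computation in all four cases (both elements of the pair even, both odd, or of mixed parity) and confirm that the absolute-value identity $|\sigma(r)| = \lceil b/2 \rceil$ really does not depend on which parity arises. Once this identity is in hand, steps one and three make the equivalence transparent.
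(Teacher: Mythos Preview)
Your proposal is correct and follows essentially the same approach as the paper: a pair-by-pair case analysis based on the parity of the endpoints of each pair of $\pi$, tracing through the explicit formula $\sigma = \psi \circ (\tilde\omega\rho|_E) \circ \psi^{-1}$. The paper organizes the cases slightly differently (by whether a pair of $\pi$ produces a through string of $\rho$, and treating the two implications separately), whereas you package the computation into the single identity $|\sigma(r)| = \lceil b/2 \rceil$ which handles both directions at once; but this is a cosmetic difference, not a substantive one.
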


\begin{proof}
Suppose first that $\sigma \leq \widetilde{\ker(l)}$, and we will show that $\pi \leq \ker(k)$; i.e. if $(r, s) \in \pi$ then $k_r = k_s$. We break this into two cases.

\medskip\noindent\textit{Case $(a)$}: $(r, -s)$ is a through string of $\rho = \epsilon \pi \delta \pi \epsilon$. Then by Lemma  \ref{lemma.structure.through.pairings} $(d)$, $r$ and $s$ have the same parity. In the even case we have $\sigma(r/2) = s/2$ and thus $k_s =  l_{s/2} = l_{\sigma(r/2)} = l_{r/2} = k_r$. In the odd case we have $\sigma(-(s+1)/2) = (r + 1)/2$. Hence $k_r = k_{r+1} = l_{(r+1)/2} = l_{(s + 1)/2} = k_{s+1} = k_s$.
In either case $k_r = k_s$.

\medskip\noindent\textit{Case $(b)$}: $(r, s)$ is a pair of $\rho = \epsilon \pi \delta \pi \epsilon$. Then by Lemma \ref{lemma.structure.through.pairings}, $r$ and $s$ have opposite parities; suppose $r$ is even. Then $\sigma(r/2) = \sigma((s + 1)/2)$. Hence $k_r = l_{r/2} = l_{(s +1)/2} = k_{s + 1} = k_s$.

In the opposite direction, suppose that $\pi \leq \ker(k)$, and we will show that $\sigma \leq \widetilde{\ker(l)}$. Suppose that $r, s \in [n]$.

\medskip\noindent\textit{Case $(c)$}: $\sigma(r) = s$. Then $\rho(2r) = 2s-1$; so $\pi(2r) = 2s -1$. Hence $l_r = k_{2r} = k_{2s-1} = k_{2s} = l_s$. 

\medskip\noindent\textit{Case $(d)$}: $\sigma(r) = -s$. Then $\rho(2r) = -2s$; so $\pi(2r) = 2s$. Hence $l_r = k_{2r} = k_{2s} = l_s$. With these two cases we conclude that $\sigma \leq \widetilde{\ker(l)}$ as claimed. 
\end{proof}

\begin{theorem}\label{thm:main_theorem_multi_matrix_case}
Suppose $X_{1, N}, \dots , X_{s, N}$ are independent Wishart matrices with the same shape, i.e. all are obtained using the same $M$. Suppose $\lim_N (M - cN) = c'$. Then
\begin{align*}\lefteqn{
\lim_N 
N\Big\{ \E(\tr(X_{l_1, N} \cdots X_{l_n,N})) - 
\mathop{\sum_{\pi \in NC(n)}}_{\pi \leq \ker(l)} c^{\#(\pi)}\Big\}} \\
&=
\mathop{\sum_{\pi \in NC(n)}}_{\pi \leq \ker(l)} c' \#(\pi) c^{\#(\pi)-1}
+ 
\mathop{\sum_{\sigma \in S_{NC}^\delta(n, -n)}}_{\sigma \leq \widetilde\ker(l)}
c^{\#(\sigma)/2}.
\end{align*}
\end{theorem}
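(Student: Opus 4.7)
The plan is to parallel the derivation that led to Corollary \ref{cor:multi_matrix_moment_theorem}, but now propagating the extra constraint $\pi\le\ker(k)$ which appears in Equation (\ref{equation:2}). The starting point is the identity
\[
\E(\tr(X_{l_1,N}\cdots X_{l_n,N})) \;=\;
\mathop{\sum_{\pi\in\cP_2(2n)}}_{\pi\le\ker(k)}
\Big(\tfrac{M}{N}\Big)^{\#(\omega\vee\pi)}
N^{\#(\gamma\omega\gamma^{-1}\vee\pi)+\#(\omega\vee\pi)-(n+1)},
\]
so the only modification compared to Lemma \ref{lemma:1} is the restriction $\pi\le\ker(k)$. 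Since this restriction is independent of $N$ and $M$, all the exponent estimates from the lemma preceding Theorem \ref{thm:first_main_claim} apply verbatim to pairings satisfying it.

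First I would isolate the $N^0$ contribution. As in the single-matrix argument, it comes from pairings with $\#(\gamma\omega\gamma^{-1}\vee\pi)+\#(\omega\vee\pi)=n+1$, which forces $\pi\in NC_2(2n)$ with $\epsilon_r=-\epsilon_s$ on every pair. Under the standard bijection $NC_2(2n)\to NC(n)$, $\pi\mapsto\sigma_\pi$, the constraint $\pi\le\ker(k)$ translates into $\sigma_\pi\le\ker(l)$ (every block of $\sigma_\pi$ is the image of a pair $(2r-1,2s)$ of $\pi$ with $l_r=l_s$). Expanding $(M/N)^{\#(\omega\vee\pi)}=(c+c'/N+o(1/N))^{\#(\sigma_\pi)}$ and collecting the $N^0$ term recovers $\sum_{\pi\in NC(n),\,\pi\le\ker(l)}c^{\#(\pi)}$, while the derivative-type $N^{-1}$ contribution is
\[
\mathop{\sum_{\pi\in NC(n)}}_{\pi\le\ker(l)} c'\,\#(\pi)\,c^{\#(\pi)-1}.
\]
This produces the shape term exactly as in \cite[Cor.~9.4]{mn}, with the constraint $\pi\le\ker(l)$ carried along.

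Next I would collect the remaining $N^{-1}$ contribution, coming from pairings with $\#(\gamma\omega\gamma^{-1}\vee\pi)+\#(\omega\vee\pi)=n$. By the lemma preceding Theorem \ref{thm:first_main_claim}, these are exactly the $\pi\in\cP_2(2n)$ with $\epsilon\pi\delta\pi\epsilon\in NC_2^\delta(2n,-2n)$; for such a $\pi$ the factor $(M/N)^{\#(\omega\vee\pi)}$ contributes $c^{\#(\omega\vee\pi)}$ in the limit (the $c'$-correction here drops by one power of $N$ and becomes negligible). Invoking the bijection of Proposition \ref{prop:annular_bijection}, and using Lemma \ref{lemma:kernel_equivalence} to rewrite the restriction $\pi\le\ker(k)$ as $\sigma\le\widetilde{\ker(l)}$ in terms of the corresponding $\sigma\in S_{NC}^\delta(n,-n)$, together with the identity $\#(\omega\vee\pi)=\#(\sigma)/2$ from Lemma \ref{lemma:the_restrictions}(b), turns the annular sum into
\[
\mathop{\sum_{\sigma\in S_{NC}^\delta(n,-n)}}_{\sigma\le\widetilde{\ker(l)}}
c^{\#(\sigma)/2}.
\]

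The step most in need of care is the bookkeeping for Lemma \ref{lemma:kernel_equivalence}, i.e.\ verifying that the relabeling which matches the index pair $(2r-1,2r)$ to the matrix index $l_r$ is compatible with the bijection of Proposition \ref{prop:annular_bijection}; this is precisely what Lemma \ref{lemma:kernel_equivalence} provides, so no further combinatorial work is needed. Adding the two contributions and using the fact that terms of order $N^{-2}$ and smaller vanish after multiplication by $N$ completes the proof.
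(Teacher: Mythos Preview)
Your proposal is correct and follows essentially the same approach as the paper's own proof: start from Equation (\ref{equation:2}), separate the $N^0$ and $N^{-1}$ contributions exactly as in the single-matrix case, invoke \cite[Cor.~9.4]{mn} for the shape term with the constraint $\pi\le\ker(l)$, and use the bijection of Proposition \ref{prop:annular_bijection} together with Lemma \ref{lemma:kernel_equivalence} to convert the annular sum into the sum over $\sigma\in S_{NC}^\delta(n,-n)$ with $\sigma\le\widetilde{\ker(l)}$. The paper's proof is terser, but your added detail (the expansion of $(M/N)^{\#(\omega\vee\pi)}$ and the explicit handling of the $c'$-correction in the annular term) is accurate and matches the intended argument.
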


\begin{proof}
From Equation (\ref{equation:2}) we only have to decide which $\pi$'s survive in the large $N$ limit. According to Theorem \ref{thm:first_main_claim} there are two cases: the non-crossing $\pi$'s and those for which $\epsilon \pi \delta \pi \epsilon \in NC_2^\delta(2n, -2n)$. That the non-crossing ones produce the first term is proved in \cite[Cor. 9.4]{mn}. In the second case we use the bijection in Proposition \ref{prop:annular_bijection} restricted to those $\pi$'s such that $\pi \leq \ker(k)$.  The image of this subset is provided by Lemma \ref{lemma:kernel_equivalence}. This produces the second term on the right hand side of the statement.
\end{proof}

\begin{remark}
We can lift the assumption that all the Wishart matrices $X_{1, N}, \dots , X_{s, N}$ have the same shape parameters $c$ and $c'$. For each block of $\pi$ or $\sigma$ we simply use corresponding pair $(c, c')$ of shape parameters. This suggests that these matrices exhibit a new kind of independence, in fact the same as found in \cite[Thm. 37]{m}. We will address this point in a subsequent paper.
\end{remark}

\section{preliminaries for the recursion formulas for $|S_{NC}^\delta(n, -n)|$}\label{sec:first_recursion}

In this section we will present some preliminaries for the recursion formula for $|S_{NC}^\delta(n, -n)|$ given below. In Proposition \ref{prop:left_or_right} we give a criterion for dividing $S_{NC}^\delta(n, -n)$

The formula will be proved in Theorem \ref{thm:the_recursion} of Section \ref{sec:the_recursion}. In the formula $m_n = |NC(n)|$ and $\ols{m}'_n = |S_{NC}^\delta(n, -n)|$. 
\[
\ols{m}'_n = (n-1) m_{n-1} + 2 \sum_{k=1}^{n-1} m_{k-1}\,  \ols{m}'_{n-k}.
\]
This is the same recursion as the one for the areas under Dyck paths, see \cite{msv} or \cite[Lemma 3.12]{f}. While the formula above is a special case of 
Equation (\ref{eq:mp_recurrence}), we need the results here to obtain (\ref{eq:mp_recurrence}). When we put $c = 1$ in Figure \ref{table:the_first_six_moments} above we get for $n = 1, \dots, 7$, the numbers 0, 1, 6, 29, 130, 562, 2380. This is sequence A\,008\,549 in \cite{sl}. 

\begin{notation}\label{notation:tilde_gamma}
Let $n \geq 1$ be fixed. Let us recall our notation: $\gamma = (1,\ab  2, 3, \dots, n) \in S_{n} \subset S_{\pm n}$. Let $\delta \in S_{\pm n}$ be given by $\delta(k) = - k$. $S_{NC}(n, -n)  = \{ \pi \in S_{\pm n} \mid  \pi$ connects the cycles of $\gamma \delta \gamma^{-1} \delta$ and $\#(\pi) + \#(\pi^{-1} \gamma \delta \gamma^{-1} \delta) = 2 n \}$. $S_{NC}^\delta(n, -n) = \{ \pi \in S_{NC}(n, -n) \mid \pi\delta$ is a pairing $\mbox{}\}$. For $\pi \in S_{NC}^\delta(n, -n)$, let $K(\pi) = \delta \gamma^{-1} \delta \pi^{-1}\gamma$ be the \textit{Kreweras complement} of $\pi$. Note that $K(\pi) \in S_{NC}^\delta(n,\ab -n)$. This is the same Kreweras complement as used in \cite{r1}. 
\end{notation}

\begin{lemma}\label{lemma:unrolling_the_annulus}
Suppose, $\pi \in S_{NC}^\delta(n, -n)$,  $1 \leq j < k \leq n$, and $\pi^{-1}(j) = -k$. Let 
\[
\hg= \gamma \delta \gamma^{-1} \delta \cdot (-j, \gamma^{-1}(k))\, \cdot (-k, \gamma^{-1}(j)).
\]
Then 
\[
\hg =
(1, 2, \dots, j-1, -(k-1), -(k-2), \dots, -j, k, k+1, \dots, n)
\]
\[
\mbox{}\times
(-n, -(n-1), \dots, -k, j, j+1, \dots, k-1, -(j-1), \dots, -1),
\]
and $\pi$ is non-crossing with respect to $\hg$. By this we mean that each cycle of $\pi$ is contained in one of the two cycles of $\hg$ and $\#(\pi) + \#(\pi^{-1}\hg) = 2n + 2$. 
\end{lemma}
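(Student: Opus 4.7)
I would organize the proof in three parts: an explicit computation of $\hg$, a cycle-count identity, and a structural conclusion via the genus formula.

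First I would verify the cycle description by direct calculation. Writing $\hg = \tg \cdot (-j, \gamma^{-1}(k)) \cdot (-k, \gamma^{-1}(j))$, both transpositions act trivially on every $x \in [\pm n]$ except the four pivot points $\gamma^{-1}(k), \gamma^{-1}(j), -j, -k$, so $\hg$ agrees with $\tg$ elsewhere. At the four pivots a line-by-line composition gives
\[
\hg(\gamma^{-1}(j)) = -(k-1), \quad \hg(-j) = k, \quad \hg(\gamma^{-1}(k)) = -(j-1), \quad \hg(-k) = j,
\]
where $-(j-1)$ is read as $-n$ when $j=1$, consistent with $\gamma^{-1}(1) = n$. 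Following the orbits of $\hg$ starting from $1$ and from $j$ produces the two cycles stated in the lemma; the empty ranges that occur when $j=1$ are handled automatically by this convention.

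Next I would establish $\#(\pi) + \#(\pi^{-1}\hg) = 2n + 2$. Since $\pi \in S_{NC}^\delta(n,-n)$ satisfies $\delta\pi\delta = \pi^{-1}$, the hypothesis $\pi^{-1}(j) = -k$ forces $\pi^{-1}(k) = -j$. Using $\tg(\gamma^{-1}(k)) = k$ and $\tg(\gamma^{-1}(j)) = j$ I get
\[
\pi^{-1}\tg(\gamma^{-1}(k)) = -j, \qquad \pi^{-1}\tg(\gamma^{-1}(j)) = -k,
\]
so $\{-j, \gamma^{-1}(k)\}$ and $\{-k, \gamma^{-1}(j)\}$ are each pairs of adjacent points in cycles of $\pi^{-1}\tg$. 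Right-multiplication by a transposition whose two points lie in a single cycle splits that cycle and adds $+1$ to the cycle count, so the first transposition contributes $+1$; since $1 \leq j < k \leq n$ forces the four pivots to be pairwise distinct (a short case check), the edge $\gamma^{-1}(j) \to -k$ is untouched by the first transposition, so the second transposition also adds $+1$. Hence $\#(\pi^{-1}\hg) = \#(\pi^{-1}\tg) + 2$, and combining with the identity $\#(\pi) + \#(\pi^{-1}\tg) = 2n$ (from $\pi \in S_{NC}(n,-n)$) I obtain $\#(\pi) + \#(\pi^{-1}\hg) = 2n + 2$.

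Finally I would deduce that each cycle of $\pi$ lies in a single cycle of $\hg$ by applying the genus identity (\ref{eq:geodesic_equation}) summed over the orbits of $\langle \pi, \hg \rangle$ on $[\pm n]$: with $k$ orbits of genera $g_1, \ldots, g_k \geq 0$,
\[
\#(\pi) + \#(\pi^{-1}\hg) + \#(\hg) = 2n + 2k - 2\sum_i g_i.
\]
Substituting $\#(\pi) + \#(\pi^{-1}\hg) = 2n+2$ and $\#(\hg) = 2$ gives $k - \sum g_i = 2$. Each orbit of $\langle \pi, \hg \rangle$ is a union of cycles of $\hg$, so $k \leq 2$, forcing $k = 2$ and $g_1 = g_2 = 0$. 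The equality $k = 2$ is exactly the statement that $\pi$ preserves each cycle of $\hg$, completing the proof. The main obstacle is the careful bookkeeping in the explicit computation, particularly through the edge case $j = 1$, together with the distinctness check for the four pivot elements that underlies the two consecutive cycle splits; both are elementary but easy to misstate.
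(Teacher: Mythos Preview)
Your proof is correct and follows essentially the same three-step architecture as the paper: explicit verification of the cycle form of $\hg$, a cycle-count argument showing $\#(\pi^{-1}\hg) = \#(\pi^{-1}\tg) + 2$, and a genus-formula argument to force each cycle of $\pi$ into a single cycle of $\hg$. The only notable difference is in the second step: the paper conjugates so as to work with the Kreweras complement $K(\pi) = \delta\gamma^{-1}\delta\,\pi^{-1}\gamma$ and then tracks which pairs among $\pm\gamma^{-1}(j), \pm\gamma^{-1}(k)$ share a cycle of $K(\pi)$, whereas you stay with $\pi^{-1}\tg$ and observe directly that the edges $\gamma^{-1}(k)\to -j$ and $\gamma^{-1}(j)\to -k$ make each transposition a cycle-splitter; your bookkeeping is arguably cleaner, but the content is the same. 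In the third step the paper argues by contradiction from transitivity while you sum the genus identity over orbits; these are equivalent formulations of the same Euler-characteristic obstruction.
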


\begin{proof}
\begin{align*}\lefteqn{
\hg =
 \gamma \delta \gamma^{-1} \delta (-j, \gamma^{-1}(k)) (-k, \gamma^{-1}(j))}
\\
& =  \gamma \cdot (-\gamma^{-1}(j), \gamma^{-1})k)  \cdot 
(-\gamma^{-1}(k), \gamma^{-1}(j))  \cdot \delta \gamma^{-1} \delta
\end{align*}
so
\[
\#(\pi^{-1} \hg) 
= \#(
(-\gamma^{-1}(k), \gamma^{-1}(j)) \cdot  \delta \gamma^{-1} \delta \cdot 
\pi^{-1} \gamma (-\gamma^{-1}(j), \gamma^{-1}(k)))
\]
\[
=
\#( (-\gamma^{-1}(k), \gamma^{-1}(j)) \cdot K(\pi) \cdot (-\gamma^{-1}(j), \gamma^{-1}(k)) ).
\]
Since $\pi^{-1}\hg (\gamma^{-1}(j)) = -k$ and $\pi^{-1}\hg (\gamma^{-1}(k)) = -j$ we have that $ -\gamma^{-1}(k)$ and $\gamma^{-1}(j) $ are in the same cycle of $K(\pi)$ and $ \gamma^{-1}(k)$ and $-\gamma^{-1}(j) $ are in the same cycle of $K(\pi)$; but not in the same cycle as $ -\gamma^{-1}(k)$ and $\gamma^{-1}(j) $. Hence
\[
\#( (-\gamma^{-1}(k), \gamma^{-1}(j)) \cdot K(\pi) \cdot (-\gamma^{-1}(j), \gamma^{-1}(k)) ) = \#(K(\pi)) + 2.
\]
Thus 
\[
\#(\pi) + \#(\pi^{-1} \hg) = \#(\pi) + \#(K(\pi)) + 2 = 2n + 2.
\]
Let $\langle \pi, \hg \rangle$ be the subgroup generated by $\pi$ and $\hg$. If $\langle \pi, \hg \rangle$ acts transitively on $[\pm n]$ then there is an integer $g \geq 0$ such that
\[
\#(\pi) + \#(\pi^{-1} \hg) + \#(\hg) = 2n + 2(1 - g).
\]
Then we would have
\[
2m + 2 = \#(\pi) + \#(\pi^{-1} \hg) = 2n - 2g,
\]
which is impossible. Hence $\langle \pi, \hg \rangle$ does not act transitively on $[\pm n]$, this means that no cycle of $\pi$ meets both of the cycles of $\hg$. Hence each cycle of $\pi$ is contained in one of the cycles of $\hg$.  
\end{proof}

\begin{lemma}\label{lemma:the_converse}
Suppose $1 \leq j < k \leq n$ and $\hg$ is as in Lemma \ref{lemma:unrolling_the_annulus}. Let $\pi \in S_{\pm n}$ be such that $(i)$ $\pi \delta$ is a pairing; $(ii)$ each cycle of $\pi$ is contained in one of the two cycles of $\hg$, $(iii)$ $\#(\pi) + \#(\pi^{-1}\hg) = 2n + 2$, and $(iv)$ $\pi \vee \gamma \delta \gamma^{-1} \delta = 1_{\pm n}$. Then $\pi \in S_{NC}^\delta(n, -n)$.
\end{lemma}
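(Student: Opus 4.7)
The plan is to verify directly the two defining conditions of $S_{NC}^\delta(n,-n)$. Property (i) is hypothesis (i), and the requirement that $\pi$ connect the two cycles of $\tg = \gamma\delta\gamma^{-1}\delta$ is exactly hypothesis (iv). So the only nontrivial point is the genus-zero identity
\[
\#(\pi) + \#(\pi^{-1}\tg) = 2n.
\]

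My approach would be a short squeeze argument. For the lower bound, I would first rewrite the defining equation of $\hg$ from Lemma \ref{lemma:unrolling_the_annulus} as
\[
\tg = \hg \cdot \tau_2 \cdot \tau_1, \qquad \tau_1 = (-j,\gamma^{-1}(k)), \quad \tau_2 = (-k,\gamma^{-1}(j)),
\]
so that $\pi^{-1}\tg = \pi^{-1}\hg \,\tau_2\,\tau_1$ differs from $\pi^{-1}\hg$ by two right transposition multiplications. Each such multiplication changes the number of cycles by exactly $\pm 1$, so combining this with hypothesis (iii) gives $\#(\pi) + \#(\pi^{-1}\tg) \geq \#(\pi) + \#(\pi^{-1}\hg) - 2 = 2n$.

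For the matching upper bound, I would apply the genus formula (\ref{eq:geodesic_equation}) to the pair $(\pi,\tg)$ on $[\pm n]$, which has cardinality $2n$. Hypothesis (iv) provides the transitivity of $\langle \pi, \tg\rangle$ needed to invoke the formula, and since $\#(\tg) = 2$ we obtain $\#(\pi) + \#(\pi^{-1}\tg) = 2n - 2g \leq 2n$ for some integer $g \geq 0$. The two inequalities combine to force equality (and $g = 0$), which is exactly the desired non-crossing identity. Thus $\pi \in S_{NC}(n,-n)$, and with (i) this gives $\pi \in S_{NC}^\delta(n,-n)$.

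The only subtle point — and the reason hypothesis (ii) is really present — is that a priori the two transposition multiplications could change the cycle count by $0$ or $+2$ rather than by $-2$, and one might expect to need to analyze where $-j, -k, \gamma^{-1}(j), \gamma^{-1}(k)$ sit in the cycles of $\pi^{-1}\hg$ (using (ii) to see that $\pi^{-1}\hg$ preserves the cycle partition of $\hg$, so that $-k$ and $\gamma^{-1}(j)$ are already in different cycles, etc.). The point of the squeeze is to sidestep this analysis: the genus upper bound pins both transpositions down as mergers automatically. Hypothesis (ii) is still essential behind the scenes because it is what makes (iii) numerically consistent (so that the lower bound is actually saturated at $2n$).
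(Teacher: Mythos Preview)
Your squeeze argument is correct and is genuinely different from the paper's proof. The paper argues directly that each of the two transpositions must be a merger: using (ii) it knows that $\pi^{-1}\hg$ preserves the two cycles of $\hg$, and then it invokes (iv) to rule out the possibility that $-j$ and $\gamma^{-1}(j)$ (respectively $-k$ and $\gamma^{-1}(k)$) lie in the same cycle of $\pi^{-1}\hg$; from this it reads off $\#(\pi^{-1}\tg)=\#(\pi^{-1}\hg)-2$ by hand. Your route replaces this structural cycle analysis by the genus inequality: transitivity from (iv) gives the upper bound $\#(\pi)+\#(\pi^{-1}\tg)\le 2n$, and the trivial transposition bound combined with (iii) gives the matching lower bound. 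This is shorter and sidesteps the case analysis entirely; the paper's version, on the other hand, makes visible exactly \emph{why} the two transpositions merge, which is informative for the surrounding bijection arguments.

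One small correction to your closing commentary: hypothesis (ii) is not ``essential behind the scenes'' in your proof --- it is simply not used. In fact (ii) is a \emph{consequence} of (iii): if $\langle \pi,\hg\rangle$ acted transitively on $[\pm n]$ then the genus equation would force $\#(\pi)+\#(\pi^{-1}\hg)\le 2n$, contradicting (iii); non-transitivity together with $\#(\hg)=2$ then gives (ii). So your argument actually shows that (ii) is redundant in the lemma's hypotheses, which is a small bonus rather than a hidden dependence.
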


\begin{proof}
Let us consider the cycle of $\hg$ containing 1:
\[
(1, 2, \dots, (j-1), -(k-1), -(k-2), \dots, -j, k, k+1, \dots, n).
\] 
If $-j$ and $\gamma^{-1}(j)$ are in the same block of $\pi^{-1}\hg$ then $\pi$ cannot have a cycle connecting a point in $\{-(k-1), -(k-2), \dots, -j\}$ to points outside, violating $(iv)$. Likewise for $-k$ and $\gamma^{-1}(k)$, they cannot be in the same block of $\pi^{-1}\hg$. In addition $-j$ and $\gamma^{-1}(j)$ are in the cycle of $\hg$ above and $-k$ and $\gamma^{-1}(k)$ are in the other cycle of $\hg$:
\[
(-n, -(n-1), \dots, -k, j , j + 1, \dots, k-1, -(j-1), -(j-2), \dots, -1). 
\]
So we have that both pairs $(-j, \gamma^{-1}(k))$ and $(-k, \gamma^{-1}(j))$ are in different cycles of $\pi^{-1}\hg$. Hence 
\begin{align*}
\#(K(\pi)) = \#(\pi^{-1} \gamma \delta \gamma^{-1} \delta)
&= 
\#(\pi^{-1} \hg \cdot (-j, \gamma^{-1}(k)) \cdot (-k, \gamma^{-1}(j)) \\
& =
\#(\pi^{-1} \hg) - 2.
\end{align*}
Thus $\#(\pi) + \#(K(\pi)) = \#(\pi) + \#(\pi^{-1} \hg) -2 = 2n$. Hence $\pi \in S_{NC}(n, -n)$. As we also have $(i)$ we get that $\pi \in S_{NC}^\delta(n, -n)$. 
\end{proof}

\begin{proposition}\label{prop:left_or_right}
Let $\pi \in S_{NC}^\delta(n, -n)$ and suppose $\pi^{-1}(1) = j \in [m]$. Let $k \geq 2$ be the smallest integer such that $\pi^{-1}(k) \in [-n]$. 
Let $I_1 = \{1, 2, \dots, j\}$ and $I_2 = \{ j+1, \dots, n\}$. Then either $I_1$ or $I_2$, but not both, meets a through cycle of $\pi$.
\end{proposition}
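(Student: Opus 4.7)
The plan is to apply Lemma~\ref{lemma:unrolling_the_annulus} at the through string determined by $k$. Write $\pi^{-1}(k) = -k'$ with $k' \in [n]$; the pairing property $\delta\pi\delta = \pi^{-1}$ then gives $\pi^{-1}(k') = -k$. The value $k' = 1$ is excluded by the hypothesis $\pi^{-1}(1) = j > 0$; $2 \le k' < k$ is excluded by the minimality of $k$; and $k' = k$ is excluded because no cycle of $\pi$ contains both $k$ and $-k$. Therefore $k < k'$, and Lemma~\ref{lemma:unrolling_the_annulus} applies with the lemma's $(j,k)$ taken to be our $(k,k')$.

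The resulting $\hat\gamma$ has two cycles $C^{\hat\gamma}_1, C^{\hat\gamma}_2$; the first contains the positives $\{1,\ldots,k-1\} \cup \{k',\ldots,n\}$ and the negatives $\{-(k'-1),\ldots,-k\}$, while the second contains the positives $\{k,\ldots,k'-1\}$ and the remaining negatives. Each cycle of $\pi$ lies in one of these, and the restriction $\pi|_{C^{\hat\gamma}_i}$ is non-crossing with respect to the cyclic order $\tau_i$ induced by $\hat\gamma|_{C^{\hat\gamma}_i}$ (this last point uses that the equality $\#(\pi) + \#(\pi^{-1}\hat\gamma) = 2n+2$ forces saturation of the disk bound in each $C^{\hat\gamma}_i$). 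Since $1 \in C^{\hat\gamma}_1$ and $j = \pi^{-1}(1)$ is in the same $\pi$-cycle as $1$, we have $j \in C^{\hat\gamma}_1$, so $j \in \{1,\ldots,k-1\} \cup \{k',\ldots,n\}$.

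I would then argue by cases. In Case~I, $j < k$: the through cycle containing $k$ meets $I_2$. Using that $\pi|_{C^{\hat\gamma}_1}$ is non-crossing relative to $\tau_1$, the cycle-direction-matches-$\tau_1$ property of a non-crossing permutation, and the identity $\pi(j) = 1$, the $\pi$-cycle $C_1$ of $1$ is confined to the forward $\tau_1$-arc from $1$ to $j$, which in Case~I is just $\{1,\ldots,j\}$; hence $C_1 \subseteq I_1 \subseteq [n]$ and is non-through. Any other $\pi$-cycle touching $I_1$ sits in an inner gap of $C_1$ in $\tau_1$, which is a subset of $\{2,\ldots,j-1\} \subseteq [n]$ and is therefore also non-through; meanwhile $\pi$-cycles inside $C^{\hat\gamma}_2$ have all their positive elements in $\{k,\ldots,k'-1\} \subseteq I_2$, so they miss $I_1$. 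Thus $I_1$ meets no through cycle. In Case~II, $j \ge k'$: since $k < k' \le j$, we have $k \in I_1$, so $I_1$ meets a through cycle. Symmetrically, the positive part of $C_1$ lies in $\{1,\ldots,k-1\} \cup \{k',\ldots,j\} \subseteq I_1$; the outer gap of $C_1$ in $\tau_1$ works out to $\{j+1,\ldots,n\} = I_2 \subseteq [n]$ and so contains only non-through cycles; and $\pi$-cycles in $C^{\hat\gamma}_2$ have their positive elements in $\{k,\ldots,k'-1\} \subseteq I_1$. Hence $I_2$ meets no through cycle.

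The main obstacle is the geometric bookkeeping in the case analysis: one must invoke the defining property of a non-crossing permutation in a disk (that each cycle traverses its vertices in the cyclic order they appear on the boundary) to locate $C_1$ precisely within $\tau_1$, and then carefully enumerate, for each of the two cases, the positive and negative contents of the inner and outer gaps of $C_1$ in $\tau_1$ as well as of the other cycle $C^{\hat\gamma}_2$. Once this picture is established, the ``exactly one'' statement follows at once by comparing the position of $k$ (which always sits in a through cycle) to $j$.
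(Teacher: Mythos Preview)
Your proof is correct and follows essentially the same approach as the paper: apply Lemma~\ref{lemma:unrolling_the_annulus} at the first through string (your $(k,k')$ is the paper's $(k,l)$), deduce that $j$ lies in the $\hat\gamma$-cycle through $1$, and then split into the cases $j<k$ and $j\ge k'$. The only difference is one of detail: where the paper argues tersely (e.g.\ ``no through block meets $I_1$ because the first through block is at $k$'' in Case~I, and ``the whole cycle would lie in $I_2$'' in Case~II), you spell out the disk non-crossing geometry explicitly by locating the block of $1$ on the forward $\tau_1$-arc and analysing its inner and outer gaps. Your version makes the implicit non-crossing reasoning in the paper's case analysis fully explicit.
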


\begin{proof}
Let $l = -\pi^{-1}(k)$. Note that $\pi^{-1}(l) = -k$; so $k < l$. As in Notation \ref{notation:tilde_gamma}, let $\hg$ be the permutation
\[
(1, 2, \dots, k-1, -(l-1), \dots, -k, l, l+1, \dots, n)
\]
\[
\mbox{} \times
(-n, \dots, -(l+1), -l, k, \dots, l-1, -(k-1), \dots, -1).
\]
Let us consider the cycle of $\hg$ containing $1$:
\[
(1, 2, \dots, k-1, -(l-1), \dots, -k, l, l+1, \dots, n);
\]
the cycle of $\pi$ containing 1 must, by Lemma \ref{lemma:unrolling_the_annulus}, be contained in this cycle, and in particular we have
\[
j \in
(1, 2, \dots, k-1, -(l-1), \dots, -k, l, l+1, \dots, n).
\]
So either $j \leq k -1$ or $j \geq l$.

Suppose $j \leq k-1$. Then $I_1 \subset \{ 1, 2, \dots, k-1\}$ and so no through block meets $I_1$ because the first through block is at $k$; but then $k$ is in $I_2$, so a through block meets $I_2$. On the other hand, suppose $j \geq l$. Then $I_2 \subset \{l+1, \dots, n\}$. If a through block of $\pi$ were to meet $I_2$, then the whole cycle would, by Lemma \ref{lemma:unrolling_the_annulus}, lie in $I_2$; but this impossible because $I_2 \subset [m]$. However $k < l \leq j$ so $k \in I_1$ and thus $I_1$ does meet a through block. 
\end{proof}

\begin{figure}
\setbox1=\hbox{\includegraphics{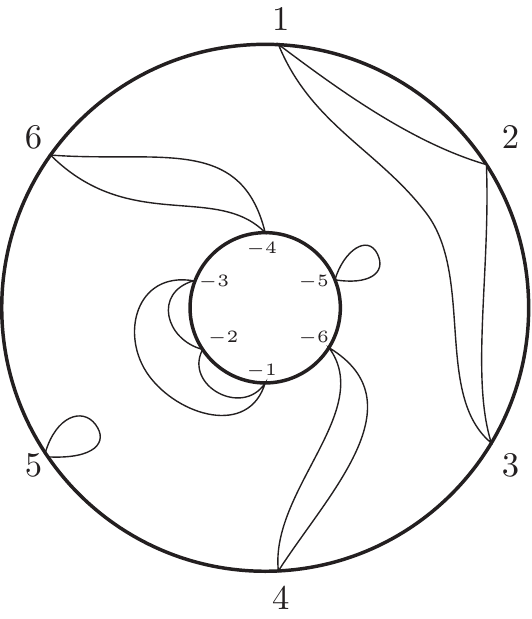}}
\setbox2=\hbox{\includegraphics{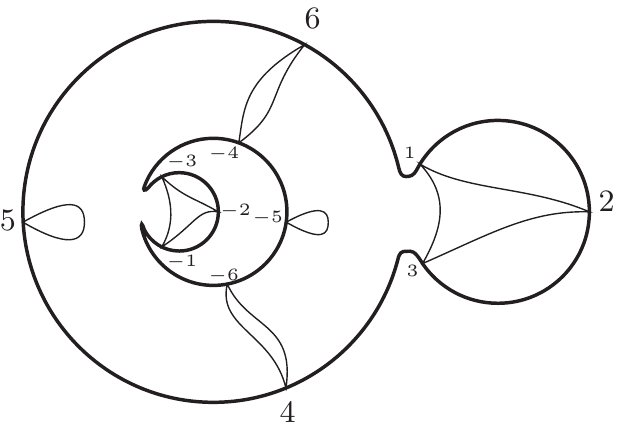}}
\begin{center}\leavevmode
$\vcenter{\hsize=\wd1\box1}$ \hfill
$\vcenter{\hsize=\wd2\box2}$
\end{center}

\caption{\small\label{fig10:intermediate_pinch} On the left we have a partition in $S_{NC}^\delta(6, -6)$, with $\pi^{-1}(1) = 3$ and $\pi(-1) = -3$.  We shrink to a point the line joining $1$ to $\pi^{-1}(1)$ and the line joining $-1$ to $\pi(1)$. This breaks off two circles. We will label $\pi_1$ the non-crossing partition of the circle we have produced after we remove $\pi^{-1}(1)$ from the block containing $1$. This shrinking procedure  shows a step half way between the left and right figures in Figure \ref{fig3:the_second_type} \textit{infra}.
\bigskip\bigskip\hbox{}}
\end{figure}

\section{the subsets $S_\mathrm{I}$, $S_{\mathrm{II}}$, and $S_{\mathrm{III}}$}\label{sec:the_subsets}

We shall write $S_{NC}^\delta(n, -n)$ as the disjoint union of three subsets according to the value of $\pi^{-1}(1)$. By Proposition \ref{prop:left_or_right}, for each $\pi \in S_{NC}^\delta(n, -n)$ exactly one of the three cases $(a)$, $(b)$, or $(c)$ below holds. 

\begin{definition}\label{def:the_division}
Let 
\begin{enumerate}

\item
$S_I = \{ \pi \in S_{NC}^\delta(n, -n) \mid \pi^{-1}(1) \in [-n]\}$;

\item
$S_{\mathit{II}} = \{ \pi \in S_{NC}^\delta(n, -n) \mid \pi^{-1}(1) \in [n]$ and the interval $I_1 = [1, \pi^{-1}(1)]$ does \textbf{not} meet a through cycle of $\pi\}$;

\item
$S_{\mathit{III}} = \{ \pi \in S_{NC}^\delta(n, -n) \mid \pi^{-1}(1) \in [n]$ and the interval $I_1 = [1, \pi^{-1}(1)]$ does meet a through cycle of $\pi\}$.
\end{enumerate}
Let 
\begin{enumerate}
\setcounter{enumi}{3}
\item
$T_I = \{ (k, \pi) \mid k \in [n-1]$ and  $\pi \in NC(n-1)\}$;

\item
$T_{\mathit{II}} = \bigcup_{k=1}^{n-1} NC(k-1) \times S_{NC}^\delta(n-k, -(n-k))$;

\item
$T_{\mathit{III}} = \bigcup_{k=2}^{n} S_{NC}^\delta(k-1, -(k-1)) \times NC(n-k)$;.
\end{enumerate}
\end{definition}

To simplify the notation, we have adopted the convention that the cardinality of $NC(0)$ is $1$. On the other hand, the cardinality of $S_{NC}^\delta(1,-1)$ is $0$. We shall exhibit bijections from $S_I$ to $T_I$, from $S_{\mathit{II}}$ to $T_{\mathit{II}}$, and from $S_{\mathit{III}}$ to $T_{\mathit{III}}$ in Notations \ref{remark:T_I}, \ref{remark:T_II}, and \ref{remark:T_III} respectively. In each case, $k = \pi^{-1}(1)$.

\begin{remark}\label{remark:T_I}
Let $k \in [n-1]$ and $\sigma \in NC(n-1)$ be given. After relabelling we consider $\sigma$ to be a non-crossing partition of the $n-1$ points $\{1, 2, 3, \dots, k-1, -n, -(n-1), \dots, -(k+1)\}$. By this we mean we relabel the point according to the  map:
\[
j \mapsto \begin{cases} j & 1 \leq j < k \\
             -(n + k - j) & k \leq j \leq n-1
           \end{cases}.
\]
We let $\pi_1$ be the partition of $\{1, 2, 3, \dots, k-1, -n, -(n-1), \dots, -(k+1), -k\}$ obtained by joining $-k$ to the block containing $1$. Since $1$ and $-k$ are cyclically adjacent $\pi_1$ will be non-crossing. Then we let $\pi = \pi_1 \delta \pi^{-1}_1 \delta \in S_{\pm n}$. By construction $\pi^{-1}(1) = -k$. Also, as the two cycles of $\hg$ are
\[
c_1 = (1, 2, 3, \dots, k-1, -n, -(n-1), \dots, -(k+1), -k) \mathrm{\ and }
\]
\[
c_2 = (k, k+1, \dots, n, -(k-1), -(k-2), \dots, -2, -1)
\]
we have that $\pi$ is non-crossing with respect to $\hg$. 

Note that $\pi^{-1} \hg(-k) = \pi^{-1}(1) = \pi_1^{-1}(1) = -k$ and $\pi^{-1} \hg(-1) = \pi^{-1}(k) = \delta\pi_1\delta(k) = \delta\pi_1(-k) = -1$. So $-k$ and $-1$ are singletons of $\pi^{-1}\hg$. Also
$k-1$ and $m$ are  in different cycles of $\pi^{-1}\hg$. Thus the four points $-1$, $-k$, $k-1$ and $m$ are all in different cycles of $\pi^{-1}\hg$.   Thus $\#(\pi^{-1} \gamma\delta \gamma^{-1} \delta) = \#(\pi^{-1} \hg) - 2$. Hence $\#(\pi) + \#(K(\pi)) = 2n$. 
\end{remark}

\begin{figure}
\begin{center}
\includegraphics{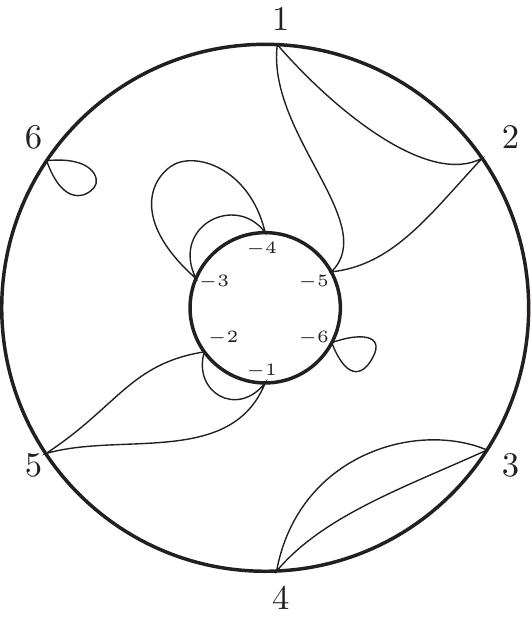} \hfill
\includegraphics{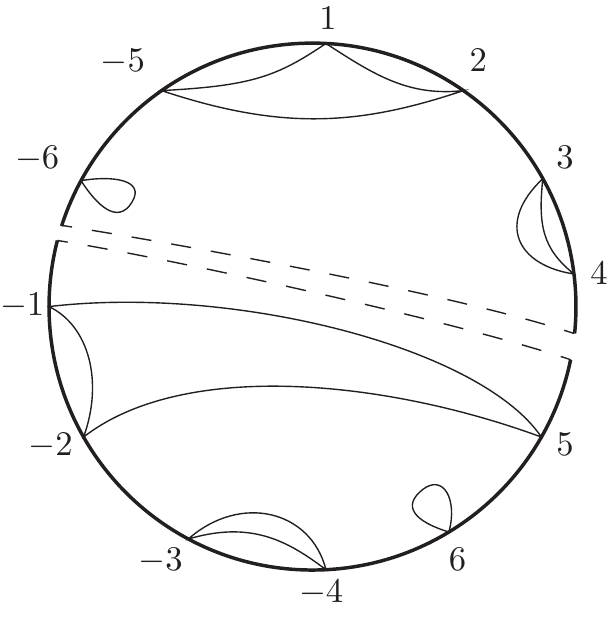} 

\kern-1.5em

\includegraphics{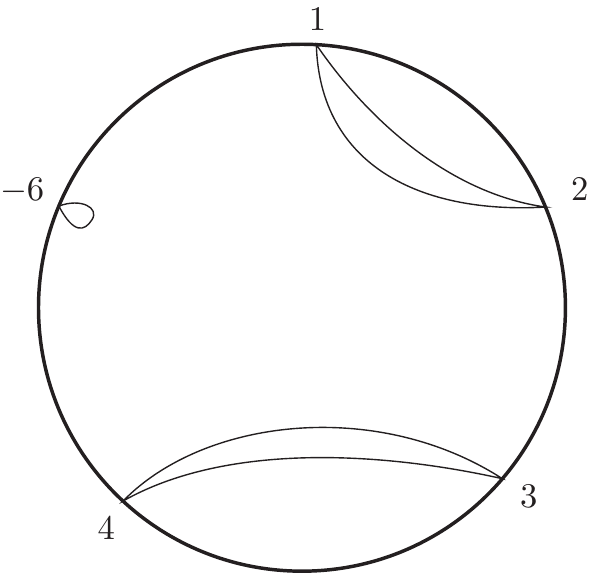}
\end{center}

\caption{\small\label{fig2:the_first_type} On the top left we have $\pi = (1, 2, -5)\ab(3, 4)\ab(-1, 5, -2)\ab(-3, -4)(6)(-6) \in S_{NC}^\delta(6, -6)$. In the terminology of Definition \ref{def:the_division}, $\pi \in S_I$. On the top right we have cut open the annulus, the cycles of $\pi$ form a non-crossing partition of this disc. In the notation of Lemma \ref{lemma:unrolling_the_annulus}, $\hg = (-6, -5, \ab 1, 2, \ab 3, \ab 4)(5, 6, -4, -3, -2, -1)$. We remove $\pi^{-1}(1)$ from the block containing $1$ and $\pi(-1)$ from the block containing $-1$ to obtain two copies of the figure on the bottom, the second copy (not shown) is the mirror image of the first. In the notation of Definition \ref{sec:the_subsets} $(d)$ and Theorem \ref{thm:T_I_bijection}, $k = 5$ and $\sigma = (1, 2)(3, 4)(-6)$, shown in the lower figure.}
\end{figure}
\begin{theorem}\label{thm:T_I_bijection}
The map in Remark \ref{remark:T_I} is a bijection from $T_I$ to $S_I$.
\end{theorem}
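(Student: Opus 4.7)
The plan is to unpack the construction of Remark~\ref{remark:T_I} as the forward map $\Phi\colon(k,\sigma)\mapsto\pi$, exhibit an explicit inverse built from Lemma~\ref{lemma:unrolling_the_annulus}, and check that both compositions return the identity. The remark itself already establishes two of the three conditions required for $\pi = \pi_1\delta\pi_1^{-1}\delta$ to lie in $S_I$: namely $\pi^{-1}(1) = -k \in [-n]$ and the count $\#(\pi) + \#(K(\pi)) = 2n$. What remains for well-definedness is, first, that $\pi\delta$ is a pairing, which is immediate since $\pi\delta = \pi_1 \delta \pi_1^{-1}$ is a conjugate of $\delta$; and second, that $\pi \vee \gamma\delta\gamma^{-1}\delta = 1_{\pm n}$, which holds because the block of $\pi$ containing $1$ also contains $-k$ and therefore connects the two cycles of $\gamma\delta\gamma^{-1}\delta$.

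For the inverse map, I would take $\pi \in S_I$ and set $k = -\pi^{-1}(1)$. Since $\pi\delta$ must be a pairing, we have $\pi(-1) \neq 1$, hence $k \geq 2$, and Lemma~\ref{lemma:unrolling_the_annulus} applies with $j = 1$. That lemma tells us that each cycle of $\pi$ is contained in one of the two cycles of $\hg$; call them $c_1$ (the one containing $1$) and $c_2$. Using the symmetry $\delta \pi \delta = \pi^{-1}$, the restriction $\pi_1 := \pi|_{c_1}$ determines the restriction to $c_2$, so $\pi$ is encoded by $\pi_1$ alone. The cycle of $\pi_1$ containing $1$ also contains $-k$ (because $\pi(-k) = 1$), and $-k$ is cyclically adjacent to $1$ in $c_1$; removing $-k$ from that block yields a non-crossing partition on the $n-1$ points $\{1,\dots,k-1,-n,\dots,-(k+1)\}$. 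Applying the inverse of the relabelling prescribed in Remark~\ref{remark:T_I} delivers $\sigma \in NC(n-1)$, and a direct check gives $\Phi(k,\sigma) = \pi$. Injectivity is automatic, since $k$ is recovered as $-\pi^{-1}(1)$ and $\sigma$ from $\pi_1$.

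The main technical point will be verifying that $\pi_1$ really is non-crossing as a partition of the $n$-point disc $c_1$. This should follow by combining the identity $\#(\pi) + \#(\pi^{-1} \hg) = 2n + 2$ of Lemma~\ref{lemma:unrolling_the_annulus} with the mirror splitting forced by $\delta \pi \delta = \pi^{-1}$, which makes the contributions from $c_1$ and $c_2$ equal and so gives the genus-zero identity on each disc separately. In the reverse direction, the output $\Phi(k,\sigma)$ satisfies the hypotheses of Lemma~\ref{lemma:the_converse}—non-crossing relative to $\hg$, cycles respecting the $c_1/c_2$ split, and connectivity via the block joining $1$ to $-k$—which certifies that $\Phi(k,\sigma) \in S_{NC}^\delta(n,-n)$ and closes the bijection.
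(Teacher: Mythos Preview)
Your proposal is correct and follows essentially the same route as the paper's proof: define the inverse by reading off $k = -\pi^{-1}(1)$, invoke Lemma~\ref{lemma:unrolling_the_annulus} with $j=1$ to split $\pi$ across the two cycles of $\hg$, restrict to $c_1$, and strip $-k$ from the block containing $1$ to recover $\sigma$. You are in fact more careful than the paper, which leaves implicit the checks that $\pi\delta$ is a pairing, that $k\ge 2$, and that $\pi_1$ is genuinely non-crossing on $c_1$; your appeal to Lemma~\ref{lemma:the_converse} at the end is a clean (if slightly redundant, given what Remark~\ref{remark:T_I} already shows) way to certify $\Phi(k,\sigma)\in S_I$.
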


\begin{proof}
We need to give the inverse map, see Figure \ref{fig2:the_first_type}. Given $\pi \in S_{NC}^\delta(n,\ab -n)$ we let $k = -\pi^{-1}(1)$. By Lemma \ref{lemma:unrolling_the_annulus}, with $j = 1$, we know that each cycle of $\pi$ is contained in one of the two cycles of $\hg$. 
Let $\pi_1$ be the cycles of $\pi$ in the cycle $c_1$ of $\hg$:
\[
(1, 2, 3, \dots, k-1, -n, -(n-1), \dots, -(k+1), -k).
\]
By construction $1$ and $-k$ are in the same block of $\pi_1$. Let $\sigma$ be the partition of $\{1, 2, 3, \dots, k-1, -n, -(n-1), \dots, -(k+1)\}$ obtained from $\pi_1$ by removing $-k$ from the block containing $1$. Then $\sigma$ is a non-crossing partition in $NC(\{1, 2, 3, \dots, k-1, -n, -(n-1), \dots, -(k+1)\})$ which we identify with $NC(m-1)$, using the inverse of the labelling map above. So from this we get the pair $(k, \sigma) \in T_I$. And by applying the construction in Notation \ref{remark:T_I} we get back to $\pi$. Thus the map is a bijection. 
\end{proof}

\begin{remark}\label{remark:T_II}
Given $\pi \in S_{NC}^\delta(n, -n)$ with $\pi^{-1}(1) \in [n]$. Let $j = \pi^{-1}(1)$ and $k$ be as in Proposition \ref{prop:left_or_right}, i.e. $k > 1$ is the smallest integer such that $\pi^{-1}(k) \in [-n]$. Suppose that we are in the case where $I_1 = \{1, 2, \dots, j\}$ does not meet a through block of $\pi$, but $I_2$ does. So $1 \leq j < n-1$. Thus $\pi|_{I_1}$ is a non-crossing partition of $I_1$, and $1$ and $j$ are in the same block of $\pi|_{I_1}$. Let $\pi_1 \in NC(j-1)$ be the partition obtained from $\pi|_{I_1}$ obtained by removing $j$ from the block containing $1$. Also $\delta \pi_1^{-1} \delta$ is a non-crossing partition of $[-(j-1)]$. Let $\pi_2 = \pi|_{I_2 \cup -I_2}$. Since this a restriction of a non-crossing annular permutation, it is itself a non-crossing annular permutation of $(I_2, -I_2)$. If we identify the points of $I_2$ with $[n - j]$ we have $\pi_2 \in S_{NC}(n-j, -(n-j))$. Since $\pi\delta$ is a pairing $\pi_2 \delta|_{I_2}$ is also a pairing.  Thus $\pi_2 \in S_{NC}(n-j, -(n-j))$. Hence we get a pair $(\pi_1, \pi_2) \in NC(j-1) \times S_{NC}^\delta(n-j, -(n-j))$. For $j > 1$ we have $\#(\pi) = 2\#(\pi_1) + \#(\pi_2)$. When $j =1$ something special happens: $(1)$ and $(-1)$ are singletons of $\pi$. Hence $\pi_1$ is the unique partition of the empty set. In this case $\#(\pi) = \#(\pi_2) + 2$. If $j = n-1$ or $n$ then, as we are assuming that $I_1$ does not meet a through block,  $\pi$ could not have a through block. As this is not possible the largest possible value of $j$ is $n - 2$
\end{remark}

\begin{theorem}
The map in Remark \ref{remark:T_II} is a bijection from $T_{\mathit{II}}$ to $S_{\mathit{II}}$.
\end{theorem}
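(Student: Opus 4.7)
The plan is to construct an explicit inverse $\Psi\colon T_{\mathit{II}} \to S_{\mathit{II}}$ to the map described in Remark \ref{remark:T_II} and verify that both compositions are the identity, paralleling the proof of Theorem \ref{thm:T_I_bijection}. Given $(\pi_1, \pi_2)$ in the level-$k$ component $NC(k-1) \times S_{NC}^\delta(n-k,-(n-k))$ of $T_{\mathit{II}}$, first build $\tilde\pi_1 \in NC(k)$ on $I_1 = [k]$ by appending $k$ to the block of $\pi_1$ containing $1$ when $k \geq 2$, and by setting $\tilde\pi_1 = \{\{1\}\}$ when $k = 1$; in either case $\tilde\pi_1^{-1}(1) = k$. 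Letting $I_2 = \{k+1,\dots,n\}$ and $\phi\colon[\pm(n-k)]\to I_2\cup(-I_2)$ be the index shift $\phi(j)=j+k$, $\phi(-j)=-(j+k)$, define $\Psi(\pi_1,\pi_2)=\pi\in S_{\pm n}$ piecewise by
\[
\pi|_{I_1} = \tilde\pi_1, \qquad \pi|_{-I_1} = \delta\tilde\pi_1^{-1}\delta, \qquad \pi|_{I_2\cup(-I_2)} = \phi\,\pi_2\,\phi^{-1}.
\]
The case $k=n-1$, where $S_{NC}^\delta(1,-1)=\emptyset$, is vacuous.

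Next, verify $\pi\in S_{\mathit{II}}$ in three steps. First, $\pi\delta$ is a pairing: $\delta\pi\delta=\pi^{-1}$ holds piecewise (using $\delta\phi=\phi\delta$ and the hypothesis $\delta\pi_2\delta=\pi_2^{-1}$), and $\pi\delta$ has no fixed points because $\tilde\pi_1$ and its mirror preserve the sign of the argument, while a would-be fixed point of $\pi\delta$ on $I_2\cup(-I_2)$ would pull back through $\phi$ to one of $\pi_2\delta$. Second, $\pi\in S_{NC}(n,-n)$: topologically the cycles of $\pi$ occupy three disjoint regions of the $(n,-n)$-annulus---the outer disk capped by the arc from $1$ to $k$, its mirror inner disk capped by the arc from $-1$ to $-k$, and the smaller annulus on the remaining points---and each piece is non-crossing in its own region, so the cycles of $\pi$ do not cross. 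Algebraically this yields $\#(\pi)=2\#(\tilde\pi_1)+\#(\pi_2)$ and, after bookkeeping, $\#(\pi)+\#(\pi^{-1}\gamma\delta\gamma^{-1}\delta)=2n$; transitivity of $\langle\pi,\gamma\delta\gamma^{-1}\delta\rangle$ on $[\pm n]$ follows because $\gamma\delta\gamma^{-1}\delta$ bridges $I_1$ to $I_2$ via $k\mapsto k+1$ and $n\mapsto 1$, while any through cycle of $\pi_2$ bridges $I_2$ to $-I_2$ via $\phi$. Third, by construction $\pi^{-1}(1)=k\in[n]$ and $\tilde\pi_1$ has no cycle leaving $I_1$, so $I_1$ meets no through cycle of $\pi$, placing $\pi$ in $S_{\mathit{II}}$.

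Finally, the two maps are mutual inverses essentially by construction. Starting from $(\pi_1,\pi_2)$ and applying the Remark to $\Psi(\pi_1,\pi_2)$ recovers $\pi_1$ by stripping $k$ from the block of $1$ in $\pi|_{I_1}$, and recovers $\pi_2$ as $\phi^{-1}\,\pi|_{I_2\cup(-I_2)}\,\phi$. Conversely, starting from $\pi\in S_{\mathit{II}}$ with $\pi^{-1}(1)=k$, extracting $(\pi_1,\pi_2)$ via the Remark and applying $\Psi$ reassembles $\pi$, because the symmetry $\delta\pi\delta=\pi^{-1}$ already forces $\pi|_{-I_1}=\delta\tilde\pi_1^{-1}\delta$ and $\pi|_{I_2\cup(-I_2)}=\phi\pi_2\phi^{-1}$.

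The main obstacle is the algebraic verification of $\#(\pi)+\#(\pi^{-1}\gamma\delta\gamma^{-1}\delta)=2n$ in the second step. While the topological picture makes this visually clear, the algebraic check requires tracking how the inner cycle $(-1,-n,-(n-1),\dots,-2)$ of $\gamma\delta\gamma^{-1}\delta$ interacts with all three regions of $\pi$. The cleanest route is to introduce the unrolled permutation $\hg$ of Lemma \ref{lemma:unrolling_the_annulus}, applied with the parameters $(k',l)$ determined by the smallest through cycle of $\pi$---equivalently, the first through cycle of $\pi_2$ shifted by $k$---so that the non-crossing count for $\pi$ factors as a disk-level Kreweras identity for $\tilde\pi_1$ on $I_1$ and the non-crossing annular identity already satisfied by $\pi_2$.
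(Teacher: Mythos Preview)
Your proposal is correct and follows the same approach as the paper: construct the explicit inverse by assembling $\pi = \tilde\pi_1\,\delta\tilde\pi_1^{-1}\delta\,\pi_2$ (with $\pi_2$ relabelled onto $I_2\cup(-I_2)$), then check that $\pi\delta$ is a pairing and that $\pi$ is non-crossing on the $(n,-n)$-annulus. The paper's proof is much terser---it relies only on the topological ``inserted into the gap'' picture for the non-crossing claim and on the identity $(\tilde\pi_1\delta\tilde\pi_1^{-1}\delta)\delta = \tilde\pi_1\delta\tilde\pi_1^{-1}$ for the pairing claim---so the algebraic cycle-count obstacle you flag is one the paper simply sidesteps; if you do pursue that route, note that Lemma~\ref{lemma:unrolling_the_annulus} already assumes $\pi\in S_{NC}^\delta(n,-n)$, so the appropriate tool is its converse, Lemma~\ref{lemma:the_converse}.
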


\begin{proof}
We need to give the inverse map, see Figure \ref{fig3:the_second_type}. Let $(\pi_1, \pi_2) \in NC(j-1) \times S_{NC}^\delta(n-j, -(n-j))$ be given. If we identify the points of $I_2$ with $[n - j]$ we have $\pi_2 \in S_{NC}(I_2, -I_2)$. We add $j$ to block of $\pi_1$ containing $1$ and denote this non-crossing partition $\tilde\pi_1$. Then, set $\pi = \tilde\pi_1 \delta \tilde\pi_1^{-1}\delta \pi_2$. Because we have inserted $\tilde\pi_1$ into the gap between $n$ and $j+1$, we get a non-crossing annular permutation; similarly with $\delta \tilde\pi_1^{-1} \delta$. Hence $\pi \in S_{NC}(n, -n)$. Since $(\tilde\pi_1\delta \tilde\pi_1^{-1}\delta) \delta = \tilde\pi_1 \delta \tilde\pi_1^{-1}$ is a pairing we have that $\pi \delta$ is a pairing and $\pi \in S_{NC}^\delta(n, -n)$.
\end{proof}

\begin{figure}\noindent
\includegraphics{fig4} \hfill
\includegraphics{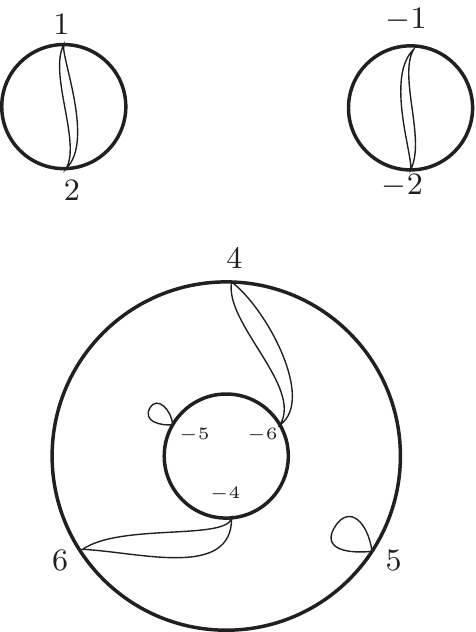} 

\bigskip

\caption{\label{fig3:the_second_type}\small On the left we have $\pi = (1, 2, 3) (-3, -2, -1)(4, -6)\ab(-4, 6)(5)(-5) \in S_{NC}^\delta(6, -6)$. $\pi^{-1}(1) = 3$, so $I_1 = \{1, 2, 3\}$ which does not meet any through blocks. So $\pi \in S_\textit{II}$. On the right we have squeezed off two circles $(1, 2, 3)$ and $(-3, -2, -1)$. Next,  $\pi^{-1}(1) = 3$ is removed from the block containing $1$ and $\pi(-1) = -3$ is removed from the block containing $-1$. }
\end{figure}

\begin{remark}\label{remark:T_III}
Given $\pi \in S_{NC}^\delta(n, -n)$ with $\pi^{-1}(1) \in [n]$. Let $j = \pi^{-1}(1)$ and $k$ be as in Proposition \ref{prop:left_or_right}, i.e. $k > 1$ is the smallest integer such that $\pi^{-1}(k) \in [-n]$. We suppose that we are in the case where $I_2 = \{j+1, \dots, n\}$ does not meet a through block of $\pi$, but $I_1$ does. So $2 \leq j \leq n - 1$. Thus $\pi_2 = \pi|_{I_2}$ is a non-crossing partition of $I_2$.  Next, $1$ and $j$ are in the same block of $\pi|_{I_1 \cup -I_1}$, as are $-1$ and $-j$. Let $\pi_1$ be the partition obtained from $\pi|_{I_1 \cup -I_1}$ obtained by removing $j$ from the block containing $1$, and $-j$ from the block containing $-1$. Since $\pi_1$ a restriction of a non-crossing annular permutation, it is itself a non-crossing annular permutation of $(j-1, -(j-1))$.
Since $\pi\delta$ is a pairing we have $\delta \pi \delta = \pi^{-1}$ and hence $\delta \pi_1 \delta = \pi_1^{-1}$. If $\pi_1 \delta$ had a singleton then so would $\pi$. Thus $\pi_1 \in S_{NC}(j-1, -(j-1))$. If we identify the points of $I_2$ with $[n - j]$ we have $\pi_2 \in NC(n-j)$. Hence we get a pair $(\pi_1, \pi_2) \in S_{NC}(j-1, -(j-1)) \times NC(n-j)$. We cannot have $j=1$ or $2$ as if either occurred $\pi$ could not have any through blocks. So $3 \leq j \leq n$. 
\end{remark}

\begin{theorem}
The map in Remark \ref{remark:T_III} is a bijection from $T_{\mathit{III}}$ to $S_{\mathit{III}}$.
\end{theorem}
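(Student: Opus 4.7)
The plan is to mirror the construction just used for $T_{\mathit{II}}$: exhibit an explicit inverse to the map of Remark \ref{remark:T_III} and check that it lands in $S_{\mathit{III}}$.

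Given $(\pi_1, \pi_2) \in S_{NC}^\delta(k-1, -(k-1)) \times NC(n-k)$, I would identify $\pi_1$ with a permutation of $I_1 \cup (-I_1)$, where $I_1 = \{1, \dots, k-1\}$, and identify $\pi_2$ with a non-crossing partition of $I_2 = \{k+1, \dots, n\}$. Let $\tilde\pi_1$ be obtained from $\pi_1$ by inserting $k$ into the cycle through $1$ as the immediate predecessor of $1$, and symmetrically inserting $-k$ into the cycle through $-1$ as the immediate successor of $-1$. Because $\delta \pi_1 \delta = \pi_1^{-1}$, these two insertions are $\delta$-conjugate, so $\delta \tilde\pi_1 \delta = \tilde\pi_1^{-1}$, and hence $\tilde\pi_1 \delta$ is still a pairing. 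Define
\[
\pi = \tilde\pi_1 \cdot \pi_2 \cdot \delta \pi_2 \delta,
\]
the product of three permutations on the disjoint supports $I_1 \cup \{k\} \cup (-I_1) \cup \{-k\}$, $I_2$, and $-I_2$.

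To verify that $\pi \in S_{\mathit{III}}$ I would invoke Lemma \ref{lemma:the_converse} with the parameters coming from $\pi^{-1}(1) = k$. Property $(i)$ (that $\pi\delta$ is a pairing) follows from the $\delta$-symmetry of each of the three factors. For $(ii)$ and $(iii)$, the two cycles of $\hg$ split the annulus into arcs that exactly carry the three factors of $\pi$, so each cycle of $\pi$ sits inside a cycle of $\hg$, and the count $\#(\pi) + \#(\pi^{-1}\hg) = 2n + 2$ splits additively: $\tilde\pi_1$ contributes $2k$ (using that $\pi_1 \in S_{NC}^\delta(k-1, -(k-1))$ already contributes $2(k-1)$ and that each of the two insertions of $\pm k$ adds one), while $\pi_2$ and $\delta\pi_2\delta$ each contribute $(n-k)+1$ from the standard non-crossing count on $NC(n-k)$. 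Property $(iv)$ holds because the through cycles of $\pi_1$ persist in $\tilde\pi_1$, so $\pi$ connects the two cycles of $\gamma\delta\gamma^{-1}\delta$. Finally, $\pi^{-1}(1) = k$ by construction, and the through cycle through $1$, which now contains $k$, witnesses that $I_1 = [1,k]$ meets a through cycle of $\pi$, placing $\pi$ in $S_{\mathit{III}}$.

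Checking that this construction inverts the assignment of Remark \ref{remark:T_III} is routine in both directions: from $\pi \in S_{\mathit{III}}$, removing $k$ from the cycle through $1$ (and $-k$ from the cycle through $-1$) and restricting to $I_2$ recovers the pair $(\pi_1, \pi_2)$; conversely the three factors of $\pi$ are visibly recovered from the pair. The main delicate point will be the additive split of the cycle count for $\hg$ — the annular analogue of the ``insertion into the gap'' step used in the $T_{\mathit{II}}$ proof, but performed symmetrically inside both cycles of $\hg$ at once.
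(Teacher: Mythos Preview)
Your strategy—construct the inverse map and check it lands in $S_{\mathit{III}}$—is exactly the paper's, but two steps in your execution break.

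First, the formula $\pi = \tilde\pi_1 \cdot \pi_2 \cdot \delta\pi_2\delta$ does not produce an element of $S_{NC}^\delta(n,-n)$. The condition that $\pi\delta$ be a pairing is equivalent to $\delta\pi\delta = \pi^{-1}$; on $I_2 \cup (-I_2)$ your $\pi$ instead satisfies $\delta\pi\delta = \pi$, so $\pi\delta$ fails to be a pairing whenever $\pi_2$ is not an involution. The correct third factor is $\delta\pi_2^{-1}\delta$, which is what the paper writes (and is the analogue of the factor $\tilde\pi_1\delta\tilde\pi_1^{-1}\delta$ you already saw in the $T_{\mathit{II}}$ proof).

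Second, your appeal to Lemma~\ref{lemma:the_converse} ``with the parameters coming from $\pi^{-1}(1)=k$'' does not typecheck: the $\hg$ of that lemma is built from a \emph{through} string $\pi^{-1}(j)=-k$, and the datum $\pi^{-1}(1)=k>0$ supplies no such pair. There is no choice of $\hg$ whose two cycles separately carry your three factors—those factors live on an inner annulus and two discs, whereas each cycle of any $\hg$ has exactly $n$ points. The paper does not invoke Lemma~\ref{lemma:the_converse} here at all; it argues directly that inserting the non-crossing disc partition $\pi_2$ (and its mirror $\delta\pi_2^{-1}\delta$) into the gap $I_2$ (resp.\ $-I_2$) of the annulus preserves the annular non-crossing property of $\tilde\pi_1$. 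One further slip: the cycle of $\pi$ through $1$ need not be a through cycle—take $\pi_1 = (1)(-1)(2,-3)(3,-2)$ as in Figure~\ref{fig3:the_third_type}. The correct reason $I_1$ meets a through cycle is simply that any through cycle of $\pi_1$ already meets $[k-1]\subseteq I_1$.
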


\begin{figure}\noindent
\includegraphics{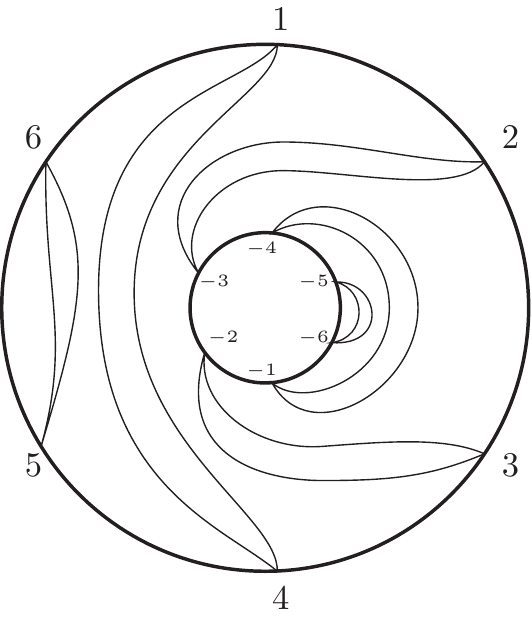} \hfill
\includegraphics{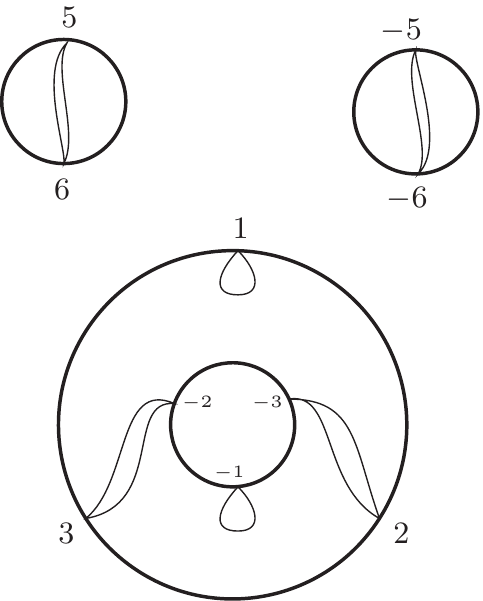} 

\bigskip

\caption{\label{fig3:the_third_type}\small On the left we have $\pi = (1, 4)(-1, -4)(2, -3)(3, -2)\ab(5, 6)(-5, -6) \in S_{NC}^\delta(6, -6)$. In this example $\pi^{-1}(1) = 4$ and $I_1 = \{1, 2, 3, 4\}$ does meet a through block. So $\pi \in S_\textit{III}$.  On the right we have $\pi_1 = (1)(-1)(2, -3)(-2, 3) \in S_{NC}(3, -3)$ and $\pi_2 = (5, 5) \in NC(2)$. }
\end{figure}

\begin{proof}
We need to give the inverse map, see Figure \ref{fig3:the_third_type}. Let $(\pi_1, \pi_2) \in S_{NC}(j-1, -(j-1)) \times NC(n-j)$ be given. We identify the points of $I_2 = \{ j+1, \dots, n\}$ with $[n - j]$ we have $\pi_2 \in NC(I_2)$. We add $j$ to block of $\pi_1$ containing $1$ and $-j$ to the block of $\pi_1$ containing $-1$, and denote this non-crossing partition $\tilde\pi_1$ i.e. $\tilde\pi_1 = \pi_1 (j, \pi_1^{-1}(1)) (-j, \pi_1(-1))$. Set $\pi = \tilde\pi_1  \pi_2\delta \pi_2^{-1}\delta$. Because we have inserted $\pi_2$ into the gap between $j+1$ and $n$, we get a non-crossing annular permutation. Hence $\pi \in S_{NC}(n, -n)$. Since $(\pi_2\delta \pi_2^{-1}\delta) \delta = \pi_2 \delta \pi_2^{-1}$ is a pairing we have that $\pi \delta$ is a pairing and thus $\pi \in S_{NC}^\delta(n, -n)$.
\end{proof}

\section{the recursion formula for $|S_{NC}^\delta(n, -n)|$}\label{sec:the_recursion}

Let $m_k = |NC(k)|$ and $m_k' = |S_{NC}^\delta(k, -k)|$. We adopt the convention that there is one partition of the empty set, so $m_0 = 1$. Also $S_{NC}^\delta(1, -1)$ is empty as there must be a through block, which has to be $(1, -1)$, but then $(1,-1)\delta$ is not a pairing, so $\ols{m}'_1 = 0$. 

\begin{theorem}\label{thm:the_recursion}
\[
\ols{m}'_n = (n-1) m_{n-1} +  \sum_{k=1}^{n}\{ m_{k-1}\,  \ols{m}'_{n-k} + \ols{m}'_{k-1} m_{n-k}\}.
\]

\end{theorem}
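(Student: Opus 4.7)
The plan is to read the formula directly off the three bijections established in Section \ref{sec:the_subsets}. By Proposition \ref{prop:left_or_right} together with Definition \ref{def:the_division}, every $\pi \in S_{NC}^\delta(n,-n)$ falls into exactly one of the three classes $S_{I}$, $S_{\mathit{II}}$, $S_{\mathit{III}}$, so that
\[
\ols{m}'_n = |S_{I}| + |S_{\mathit{II}}| + |S_{\mathit{III}}|.
\]
I would then evaluate each cardinality using the bijections $S_{I} \leftrightarrow T_{I}$, $S_{\mathit{II}} \leftrightarrow T_{\mathit{II}}$, $S_{\mathit{III}} \leftrightarrow T_{\mathit{III}}$ already proved.

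First, from $S_{I}\cong T_{I}= [n-1]\times NC(n-1)$ one immediately gets $|S_{I}|=(n-1)\,m_{n-1}$, which will supply the first term $(n-1)m_{n-1}$ of the recursion. Next, from $S_{\mathit{II}}\cong T_{\mathit{II}}=\bigcup_{k=1}^{n-1} NC(k-1)\times S_{NC}^\delta(n-k,-(n-k))$ one gets
\[
|S_{\mathit{II}}|=\sum_{k=1}^{n-1} m_{k-1}\,\ols{m}'_{n-k},
\]
and symmetrically $|S_{\mathit{III}}|=\sum_{k=2}^{n} \ols{m}'_{k-1}\,m_{n-k}$.

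To get the clean index ranges $1\le k\le n$ appearing in the statement, I would adopt the conventions $m_0 = |NC(0)| = 1$ and $\ols{m}'_0 = |S_{NC}^\delta(0,0)| = 0$ (consistent with the paper's convention that $\ols{m}'_1 = 0$ as well, since $(1,-1)\delta$ fails to be a pairing). With these conventions, the boundary term $k=n$ in the $S_{\mathit{II}}$ sum contributes $m_{n-1}\ols{m}'_0 = 0$ and the boundary term $k=1$ in the $S_{\mathit{III}}$ sum contributes $\ols{m}'_0 m_{n-1}=0$, so extending both summations to $1\le k\le n$ changes nothing. Combining,
\[
\ols{m}'_n \;=\; (n-1)m_{n-1} \;+\; \sum_{k=1}^{n}\bigl\{\, m_{k-1}\,\ols{m}'_{n-k} \;+\; \ols{m}'_{k-1}\, m_{n-k}\,\bigr\},
\]
which is the claimed recursion.

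The only real content is already in the three bijection theorems of Section \ref{sec:the_subsets}; there is no remaining obstacle beyond carefully matching the indexing conventions. In particular, if one preferred the more symmetric form written in the introduction, $\ols{m}'_n = (n-1)m_{n-1}+(1+c)\ols{m}'_{n-1}+2\sum_{k=2}^{n-2}\ols{m}'_k m_{n-k-1}$ (with $c=1$), one would simply exploit the reflection symmetry $\pi\mapsto\delta\gamma\pi^{-1}\gamma^{-1}\delta$ that swaps $S_{\mathit{II}}$ and $S_{\mathit{III}}$, giving $|S_{\mathit{II}}|=|S_{\mathit{III}}|$, and then peel off the boundary contributions $k=1$ and $k=n$ from the combined sum.
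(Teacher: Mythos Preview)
Your proposal is correct and follows essentially the same approach as the paper: decompose $S_{NC}^\delta(n,-n)$ into $S_I\cup S_{\mathit{II}}\cup S_{\mathit{III}}$, read off the cardinalities from the bijections with $T_I$, $T_{\mathit{II}}$, $T_{\mathit{III}}$, and absorb the vanishing boundary terms (using $\ols{m}'_0=\ols{m}'_1=0$) to extend the sums to $1\le k\le n$. The paper's proof is the same argument, just slightly terser about the boundary bookkeeping; your explicit introduction of the convention $\ols{m}'_0=0$ is a clean way to handle what the paper leaves implicit when it calls the $n=1$ sum ``empty''. The final paragraph about the reflection symmetry is extra commentary not needed for the theorem as stated, and the paper does not invoke such a map---it simply observes by reindexing that the $S_{\mathit{II}}$ and $S_{\mathit{III}}$ sums are equal.
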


\begin{proof} 
When $n = 1$, $\ols{m}'_1 = 0$. On the other hand $(n-1)m_{n-1} = 0$ and the sum is empty; thus the identity holds for $n = 1$. Now fix $n> 1$. The cardinality of $T_I$ is $(n-1) m_{n-1}$. From Definition \ref{def:the_division} $(e)$, the cardinality of $T_\mathrm{II}$ is $\sum_{k = 1}^{n-2} m_{k-1}\, \ols{m}'_{n-k}$, keeping in mind that the term for $k = n-1$ is $0$.  The cardinality of $T_\mathrm{III}$ is $\sum_{k=2}^{n} \ols{m}'_{k-1} m_{n-k} = \sum_{k=3}^n \ols{m}'_{k-1} m_{n-k} = \sum_{k = 1}^{n-2} m_{k-1}\, \ols{m}'_{n-k}$. Adding these up we get the claim in the theorem.
\end{proof}

\begin{theorem}\label{thm:special_moment_generating_function}
Let $M(z) = \sum_{n=0}^\infty m_n z^n$ and $m(z) = \sum_{n=1}^\infty \ols{m}'_n z^n$. Then 
\[
m(z) = \frac{z^2M'(z)}{1 - 2 z M(z)} = \frac{1 - 2 z - \sqrt{1 - 4z}}{2(1 - 4 z)}.
\] 
\end{theorem}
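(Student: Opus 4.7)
My plan is to translate the recursion of Theorem~\ref{thm:the_recursion} directly into a functional equation for $m(z)$, solve algebraically for $m(z)$ in terms of $M(z)$, and then substitute the explicit Catalan generating function $M(z)=(1-\sqrt{1-4z})/(2z)$ to obtain the closed form on the right.

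First I would rewrite the recursion in a symmetric form. Under the substitution $k\mapsto n+1-k$ the sum $\sum_{k=1}^{n}\bar m'_{k-1}m_{n-k}$ becomes $\sum_{k=1}^{n}m_{k-1}\bar m'_{n-k}$, so Theorem~\ref{thm:the_recursion} reads
\[
\bar m'_n \;=\; (n-1)m_{n-1} \;+\; 2\sum_{k=1}^{n} m_{k-1}\,\bar m'_{n-k}\qquad (n\ge 1),
\]
with the convention $\bar m'_0=0$. Multiplying by $z^n$ and summing over $n\ge 1$, the first term yields
\[
\sum_{n\ge 1}(n-1)m_{n-1}z^n \;=\; z\sum_{j\ge 0} j\,m_j z^j \;=\; z^2 M'(z),
\]
while the convolution term, after setting $j=k-1$ and $\ell=n-k$, gives
\[
2\sum_{n\ge 1}\sum_{k=1}^{n} m_{k-1}\bar m'_{n-k}z^n \;=\; 2z\sum_{j,\ell\ge 0} m_j\bar m'_\ell z^{j+\ell} \;=\; 2zM(z)m(z).
\]
Thus the recursion is equivalent to the functional equation $m(z) = z^2M'(z) + 2zM(z)\,m(z)$, which I solve immediately as
\[
m(z) \;=\; \frac{z^2 M'(z)}{1-2zM(z)}.
\]

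For the explicit form, I would use the well-known identity $zM(z)^2 - M(z) + 1 = 0$, which yields
\[
1 - 2zM(z) \;=\; 1 - \bigl(1-\sqrt{1-4z}\bigr) \;=\; \sqrt{1-4z}.
\]
Differentiating $M(z)=(1-\sqrt{1-4z})/(2z)$ (writing $u=\sqrt{1-4z}$ so $u^2=1-4z$ and $u'=-2/u$), a short calculation gives
\[
z^2M'(z) \;=\; \frac{2z - u + u^2}{2u} \;=\; \frac{1-2z-\sqrt{1-4z}}{2\sqrt{1-4z}},
\]
where I used $u^2+2z = 1-2z$. Dividing by $\sqrt{1-4z}$ produces
\[
m(z) \;=\; \frac{1-2z-\sqrt{1-4z}}{2(1-4z)},
\]
as required.

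The derivation is mostly routine generating-function manipulation; the only mildly clever step is the symmetry observation that collapses the two convolutions in the recursion of Theorem~\ref{thm:the_recursion} into a single term $2zM(z)m(z)$, which is what makes the functional equation linear in $m(z)$ and hence immediately solvable. The rest is an unpacking of the Catalan generating function.
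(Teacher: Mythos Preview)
Your proof is correct and follows essentially the same route as the paper: translate the recursion of Theorem~\ref{thm:the_recursion} into the functional equation $m(z)=z^2M'(z)+2zM(z)m(z)$, solve for $m(z)$, and then substitute the Catalan generating function. The only cosmetic differences are that you make the symmetrization $k\mapsto n+1-k$ explicit (the paper computes one convolution and doubles it), and the paper adds a sentence about analyticity on the disc of radius $1/4$, which you obtain implicitly from the closed form.
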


\begin{proof}
We begin by treating $m$ and $M$ as formal power series, so we may differentiate under the  summation sign to obtain
\begin{align*}
\sum_{n=1}^\infty (n-1) m_{n-1} z^n 
&=
z^2 \sum_{n=1}^\infty m_{n-1} (n -1) z^{n-2} \\
&=
z^2 \frac{d}{dz} \sum_{n=1}^\infty m_{n-1} z^{n-1}
=
z^2 M'(z).
\end{align*}
Next, observe that
\begin{align*}\lefteqn{
\sum_{n=1}^\infty \sum_{k=1}^{n-1} m_{k-1} \ols{m}'_{n-k} z^n 
=
 z \sum_{k=1}^\infty \sum_{n=k+1}^\infty m_{k-1} z^{k-1} \ols{m}'_{n-k} z^{n-k} }\\
& =
z \sum_{k=1}^\infty m_{k-1} z^{k-1} 
  \sum_{n=k+1}^\infty \ols{m}'_{n-k} z^{n-k} 
=
z M(z)  \sum_{n=1}^\infty \ols{m}'_{n} z^{n} = z m(z) M(z). 
\end{align*}
Thus the recursion equation in Theorem \ref{thm:the_recursion} gives us
\[
m(z) = z^2 M'(z)  + 2 z m(z) M(z). 
\]
This gives us the first claimed equality. Now we have $M(z) = \frac{1 - \sqrt{ 1 - 4 z}}{2 z}$ has a radius of convergence of $1/4$ so $M$ is analytic on $D$, the open disc with centre $0$ and radius $1/4$. As $1 - 2 z M(z) = \sqrt{1 - 4 z}$ we have that $1 - 2 z M(z) \not= 0$ on $D$. Hence $m$ is analytic on $D$. Finally we have $z^2 M'(z) = \frac{ 1 - 2 z - \sqrt{1 - 4 z}}{2\sqrt{1 - 4 z}}$, this gives us the second claimed equality.
\end{proof}

\begin{corollary} \label{cor:exact_formula}
For $n \geq 1$
\[
\ols{m}'_n = 4^{n-1} - \frac{1}{2} \binom{2n}{n}. 
\]
\end{corollary}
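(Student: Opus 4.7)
The plan is to extract the coefficient of $z^n$ directly from the closed form of the generating function $m(z)$ supplied by Theorem \ref{thm:special_moment_generating_function}. Since we already have
\[
m(z) = \frac{1-2z-\sqrt{1-4z}}{2(1-4z)},
\]
the task is purely an exercise in splitting this into two summable pieces and reading off the coefficients.

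First I would rewrite $m(z)$ as a sum of a rational function and a square-root term by factoring $\sqrt{1-4z}$ out of the denominator of the second piece:
\[
m(z) \;=\; \frac{1-2z}{2(1-4z)} \;-\; \frac{\sqrt{1-4z}}{2(1-4z)} \;=\; \frac{1-2z}{2(1-4z)} \;-\; \frac{1}{2\sqrt{1-4z}}.
\]
Next I would expand the two pieces using the standard identities $\frac{1}{1-4z}=\sum_{n\ge 0}4^n z^n$ and $\frac{1}{\sqrt{1-4z}}=\sum_{n\ge 0}\binom{2n}{n}z^n$. A short computation gives
\[
\frac{1-2z}{2(1-4z)} \;=\; \frac{1}{2} + \sum_{n=1}^{\infty} 4^{n-1}\,z^n,
\qquad
\frac{1}{2\sqrt{1-4z}} \;=\; \frac{1}{2} + \sum_{n=1}^{\infty} \tfrac{1}{2}\binom{2n}{n}\,z^n.
\]

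Finally I would subtract: the two $\tfrac{1}{2}$ constants cancel and one reads off
\[
m(z) \;=\; \sum_{n=1}^{\infty}\Big(4^{n-1}-\tfrac{1}{2}\binom{2n}{n}\Big)\,z^n,
\]
which, by definition of $m(z)$ as the ordinary generating function of $\{\ols m'_n\}_{n\ge 1}$, is exactly the claimed formula. There is no real obstacle here beyond the routine coefficient extraction, since the non-trivial work (the recursion and the solution of the functional equation) was already done in Theorems \ref{thm:the_recursion} and \ref{thm:special_moment_generating_function}; one could equally well verify the formula directly against the recursion by induction, but going through $m(z)$ seems cleaner.
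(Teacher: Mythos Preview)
Your proof is correct and follows essentially the same route as the paper: both arguments rely on the closed form for $m(z)$ from Theorem~\ref{thm:special_moment_generating_function} together with the standard expansions of $\frac{1}{1-4z}$ and $\frac{1}{\sqrt{1-4z}}$. The only cosmetic difference is the direction---the paper computes the generating function of the sequence $4^{n-1}-\tfrac{1}{2}\binom{2n}{n}$ and checks that it equals $m(z)$, whereas you split $m(z)$ and read off its coefficients---but the underlying identities (and indeed the two halves you isolate) are the same.
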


\begin{proof}
We have
\[
\sum_{n=1}^\infty 4^{n-1} z^n = \frac{ z}{1 - 4 z}
\mbox{\ and\ }
\sum_{n=1}^\infty \binom{2n}{n} z^2 = \frac{1 - \sqrt{1 - 4 z}}{\sqrt{1 - 4 z}}.
\]
So 
\[
\sum_{n=1}^\infty \Big[ 4^{n-1} - \frac{1}{2} \binom{2n}{n} \Big] z^n
= 
\frac{z}{1 - 4z} - \frac{1 - \sqrt{1 - 4 z}}{2 \sqrt{1 - 4 z}}  
= 
m(z).
\]
\end{proof}

\section{the recursion formula for the infinitesimal\\ moments of a real wishart matrix }\label{sec:the_mp_recursion}

Let recall the free cumulants of the Marchenko-Pastur law with parameter $c$: $\kappa_n = c$ for $n \geq 1$. This means that the moments are given by
\[
m_n = \sum_{\pi \in NC(n)} c^{\#(\pi)}.
\]
From this one gets the following recursion for the moments:
\begin{equation}\label{eq:mp_recurrence}
m_n = (c-1) m_{n-1} + \sum_{k=1}^{n} m_{k-1} m_{n-k}.
\end{equation}
Our goal in this section is to establish the recurrence
\begin{equation}\label{eq:inf_mp_recurrence}
\ols{m}'_n  = (n-1)m_{n-1} + (c-1) \ols{m}'_{n-1} + \sum_{k=1}^{n} \{\ols{m}'_{k-1} m_{n-k} + m_{k-1} \ols{m}'_{n-k} \}.
\end{equation}
for the infinitesimal moments $\{ \ols{m}'_n \}_{n = 1}^\infty$. 
Note that the sequences $\{ m_n \}_{n = 1}^\infty$ and $\{ \ols{m}'_n \}_{n = 1}^\infty$ are a sequences of polynomials in $c$ with integer coefficients, see Proposition \ref{prop:counting_diagrams}. When $c = 1$, equation (\ref{eq:mp_recurrence}) is the usual recurrence for the Catalan numbers. When $c=1$, equation (\ref{eq:inf_mp_recurrence}) gives  the recurrence in Theorem \ref{thm:the_recursion}.  As the method for getting the second recurrence is the annular version of the recurrence for the Catalan numbers, we shall review the method here. 

For the purposes of this section we shall regard $NC(n)$ as a subset of the symmetric group $S_n$, where the cycles of the permutation are the blocks of the partition. As usual we embed $S_n$ into $S_{n+1}$ by making it act trivially on $\{ n + 1\}$. From the point of view of partitions this means adding to a partition of $[n]$ the one element block $(n + 1)$ to get a partition of $[n + 1]$.

If we let $\tau_n = (1 , n)$ and $\sigma \in S_{n-1}$ then $ \tau_{n}\sigma \in S_{n}$ is  the permutation whose cycles are unchanged except the one containing $1$. If this cycle was $(i_1, \dots, i_k)$ with $i_1 = 1$, the it becomes $(n, i_1, \dots, i_k)$ in $\tau_n\sigma$. If $\sigma$ was non-crossing then $\tau_n \sigma$ is also non-crossing; and $(\tau_n \sigma)^{-1}(1) = n$. 

Conversely if $\sigma \in NC(n)$ and $\sigma^{-1}(1) = n$ then $\tau_n \sigma$ leaves $n$ fixed and so $\tau_n \sigma \in S_{n-1}$; moreover $\tau_n \sigma \in NC(n-1)$ because we cannot introduce  crossing by removing a block.

\begin{lemma}\label{lemma:removing_a_point}
The map $\sigma \mapsto \tau_n \sigma$ maps $NC(n-1)$ bijectively onto $\{ \pi \in NC(n) \mid 1$ and $n$ are in the same block$\}$.
\end{lemma}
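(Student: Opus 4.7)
My plan is to build directly on the two observations already recorded in the paragraphs preceding the lemma. The first observation is that if $\sigma \in NC(n-1) \subset S_n$ (acting trivially on $n$), then $\tau_n \sigma$ is still non-crossing: its cycle structure differs from that of $\sigma$ only in that $n$ is appended to the cycle containing $1$, and this preserves non-crossing because $n$ and $1$ are cyclically adjacent on the $n$-point disc. Moreover $(\tau_n \sigma)^{-1}(1) = n$, so in $\tau_n \sigma$ the elements $1$ and $n$ share a cycle. The second observation is the converse: if $\pi \in NC(n)$ with $\pi^{-1}(1) = n$, then $\tau_n \pi$ fixes $n$, hence lies in $S_{n-1}$, and it belongs to $NC(n-1)$ because deleting an element from a block of a non-crossing partition cannot create a crossing.

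Before combining these, I would reconcile the two descriptions of the target set: the lemma uses ``$1$ and $n$ in the same block'', while the observations above use ``$\pi^{-1}(1) = n$''. These conditions are equivalent for $\pi \in NC(n)$ under the identification of $NC(n)$ with non-crossing permutations, because each cycle is written in the cyclic order induced by the clockwise ordering of the disc, which for the points $[n]$ coincides with the usual numerical order starting from the smallest element of the block. Hence a cycle containing both $1$ (the minimum of $[n]$) and $n$ (the maximum) has the form $(1, j_2, \dots, j_{k-1}, n)$, which forces $\pi(n) = 1$.

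Putting these ingredients together, the maps $\sigma \mapsto \tau_n \sigma$ and $\pi \mapsto \tau_n \pi$ are well-defined between $NC(n-1)$ and the set $\{\pi \in NC(n) \mid 1 \text{ and } n \text{ lie in the same block}\}$, and they are mutual inverses because $\tau_n^2 = e$. This establishes the claimed bijection.

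I do not anticipate a real obstacle here; the bulk of the content has already been laid out in the text, and the only mildly delicate point is the equivalence of the two descriptions of the target set, which follows immediately from the convention of writing cycles of non-crossing permutations in clockwise cyclic order.
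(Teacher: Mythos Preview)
Your proposal is correct and follows essentially the same approach as the paper, which presents the two key observations in the paragraphs immediately preceding the lemma and then states the lemma without a separate proof environment. Your added reconciliation of ``$1$ and $n$ in the same block'' with ``$\pi^{-1}(1)=n$'' is a helpful clarification but does not depart from the paper's line of argument.
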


\begin{remark}
Let $NC(n)_k$ denote the set of non-crossing partitions of $[n]$ such that $k$ is the largest element in the block containing $1$. Then we have $NC(n) = \ds\mathop{\cup}_{k=1}^n NC(n)_k$. In addition $NC(n)_k \ni \sigma \mapsto \sigma_1 \times \sigma_2 \in NC(k-1) \times NC(n-k)$ is a bijection where $\sigma_1$ is the element of $NC(k-1)$ produced by restricting $\sigma$ to $[k]$ and then applying Lemma \ref{lemma:removing_a_point}. We get $\sigma_2$ by restricting $\sigma$ to $[k+1, n]$.

For the Marchenko-Pastur law with parameter $c$ we have 
\begin{equation}\label{eq:mp_moments}
m_n = \sum_{\pi \in NC(n)} c^{\#(\pi)}.
\end{equation} 
Thus by Lemma \ref{lemma:removing_a_point}
\begin{align*}
m_n &= \sum_{\pi \in NC(n)} c^{\#(\pi)} = \sum_{k=1}^n \sum_{\pi \in NC(n)_k}
c^{\#(\pi)} \\
& = c \sum_{\pi \in NC(n-1)} c^{\#(\pi)} + 
\sum_{k=2}^n \sum_{\pi_1 \in NC(k-1)} \sum_{\pi_2 \in NC(n-k)}  c^{\#(\pi_1)} c^{\#(\pi_2)}      \\
& =
c m_{n-1} + \sum_{k=2}^n m_{k-1} m_{n-k} 
=
(c - 1) m_{n-1} + \sum_{k=1}^n m_{k-1} m_{n-k}.
\end{align*}
This gives us the recurrence for the moments of the Marchenko-Pastur law: equation (\ref{eq:mp_recurrence}).
\end{remark}

Now let us turn to the infinitesimal law of a real Wishart matrix. We have that using the notation of Definition \ref{def:the_division}
\begin{align*}
\ols{m}'_n
&=
\sum_{\pi \in S_{NC}^\delta(n, -n)} c^{\#(\pi)/2} \\
&=
\sum_{\pi \in S_I} c^{\#(\pi)/2} +
\sum_{\pi \in S_{\mathit{II}}} c^{\#(\pi)/2} +
\sum_{\pi \in S_{\mathit{III}}} c^{\#(\pi)/2}.
\end{align*}
Now let us separately find an expression for each term.

\begin{lemma}\label{lemma:s_I_term}
\[
\sum_{\pi \in S_I} c^{\#(\pi)/2} 
=
(n-1) m_{n-1}.
\]
\end{lemma}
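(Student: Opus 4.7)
The plan is to invoke the bijection between $S_I$ and $T_I$ established in Theorem \ref{thm:T_I_bijection} and translate the exponent $\#(\pi)/2$ into $\#(\sigma)$ under this correspondence. Recall from Remark \ref{remark:T_I} that to $(k,\sigma) \in T_I$ with $k \in [n-1]$ and $\sigma \in NC(n-1)$ we first associate $\pi_1$, obtained from $\sigma$ (after the relabeling onto $\{1,\dots,k-1,-n,\dots,-(k+1)\}$) by \emph{attaching} $-k$ to the block of $\sigma$ containing $1$, and then set $\pi = \pi_1 \, \delta \pi_1^{-1} \delta$.

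First I would observe that attaching $-k$ to an existing block does not create a new block, so $\#(\pi_1)=\#(\sigma)$. Next, the supports of $\pi_1$ and $\delta\pi_1^{-1}\delta$ are the two disjoint halves of $[\pm n]$, namely
\[
\{1,\dots,k-1,-k,-(k+1),\dots,-n\} \quad \textrm{and} \quad \{-1,\dots,-(k-1),k,k+1,\dots,n\}.
\]
Hence the cycles of the two factors are disjoint and $\#(\pi) = \#(\pi_1) + \#(\delta\pi_1^{-1}\delta) = 2\#(\pi_1) = 2\#(\sigma)$, so that $c^{\#(\pi)/2} = c^{\#(\sigma)}$.

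Using the bijection $T_I \leftrightarrow S_I$ together with this weight-preservation, the sum factors as
\[
\sum_{\pi \in S_I} c^{\#(\pi)/2}
= \sum_{k=1}^{n-1} \sum_{\sigma \in NC(n-1)} c^{\#(\sigma)}
= (n-1)\, m_{n-1},
\]
the last equality by the Marchenko--Pastur moment formula (\ref{eq:mp_moments}). The only subtlety is verifying that $\#(\pi) = 2\#(\sigma)$ under the bijection; once that is established the result is immediate. No step here appears to be a genuine obstacle, provided one is careful with the relabeling in Remark \ref{remark:T_I}.
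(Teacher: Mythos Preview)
Your proof is correct and follows essentially the same route as the paper: both invoke the bijection of Theorem \ref{thm:T_I_bijection} and reduce the claim to showing $\#(\pi)=2\#(\sigma)$, which is established via the symmetric splitting of the cycles of $\pi$ between the two halves of $[\pm n]$. The only cosmetic difference is that you argue via the forward direction of the bijection (building $\pi=\pi_1\,\delta\pi_1^{-1}\delta$ from $(k,\sigma)$), whereas the paper phrases it via the inverse direction (restricting $\pi$ to one cycle of $\hat\gamma$); the substance is identical.
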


\begin{proof}
By Theorem \ref{thm:T_I_bijection}
\[
\sum_{\pi \in S_I} c^{\#(\pi)/2} 
=
(n-1) \sum_{\pi_1 \in NC(n-1)} c^{\#(\pi_1)}
\]
because when we go from $\pi \in S_I$ we let $k = -\pi^{-1}(1)$ and we get $\pi_1$ by restricting $\pi$ to 
\[
(1, 2, 3, \dots, k-1, -n, -(n-1), \dots, -(k+1), -k).
\]
and then we get $\pi \in NC(\{ 1, 2, 3, \dots, k-1, -n, -(n-1), \dots, -(k+1), -k\})$ by removing $-k$ from the cycle containing $1$. Since there are exactly the same number of cycles of $\pi$ in the other cycle of $\hg$
\[
(k, k+1, \dots, n-1, n, -(k-1), \dots, -3, -2, -1)
\]
we have $\#(\pi) = 2 \#(\pi_1)$. Now sum over $k$, noting that we cannot have $k= 1$,  this gives the factor of $(n-1)$. 
\end{proof}

\begin{lemma}\label{lemma:s_II_term}
\[
\sum_{\pi \in S_\mathit{II}} c^{\#(\pi)/2} 
=
(c - 1) \ols{m}'_{n-1} + \sum_{j=1}^{n - 2} m_{j-1} \ols{m}'_{n - j}.
\]
\end{lemma}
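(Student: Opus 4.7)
The plan is to apply the bijection established in the theorem following Remark \ref{remark:T_II}, which identifies $S_\mathit{II}$ with the disjoint union
\[
T_\mathit{II} = \bigsqcup_{j=1}^{n-2} NC(j-1) \times S_{NC}^\delta(n-j, -(n-j)),
\]
where the index $j = \pi^{-1}(1)$ (the upper bound is $n-2$ because $\pi$ must carry at least one through cycle and $I_1 = [1,j]$ cannot contain it). Under this bijection, a pair $(\pi_1, \pi_2)$ is mapped to $\pi = \tilde\pi_1 \delta \tilde\pi_1^{-1}\delta \,\pi_2$, where $\tilde\pi_1$ is obtained from $\pi_1$ by adjoining $j$ to the block containing $1$. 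So my first step would be to split the sum defining the left-hand side according to the value of $j$.

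Next, I would track the cycle count. For $j \geq 2$, the two copies $\tilde\pi_1$ and $\delta \tilde\pi_1^{-1}\delta$ produce $2\#(\pi_1)$ cycles, while $\pi_2$ contributes its own $\#(\pi_2)$ cycles, so $\#(\pi)/2 = \#(\pi_1) + \#(\pi_2)/2$. Summing the associated monomials gives
\[
\sum_{\pi_1 \in NC(j-1)} c^{\#(\pi_1)} \ \cdot \sum_{\pi_2 \in S_{NC}^\delta(n-j,-(n-j))} c^{\#(\pi_2)/2} = m_{j-1}\,\ols{m}'_{n-j}.
\]
The case $j=1$ is the only delicate step: here $\pi_1$ is the empty partition, and as noted in Remark \ref{remark:T_II} the point $1$ (and symmetrically $-1$) becomes a singleton of $\pi$, so $\#(\pi) = \#(\pi_2) + 2$ rather than $\#(\pi_2)$. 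Thus the $j=1$ stratum contributes
\[
c \cdot \ols{m}'_{n-1}.
\]

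Combining both cases yields
\[
\sum_{\pi \in S_\mathit{II}} c^{\#(\pi)/2} = c\,\ols{m}'_{n-1} + \sum_{j=2}^{n-2} m_{j-1}\,\ols{m}'_{n-j}.
\]
The final step is purely algebraic: since $m_0 = 1$, we can absorb one copy of $\ols{m}'_{n-1}$ into the sum and write
\[
c\,\ols{m}'_{n-1} = (c-1)\,\ols{m}'_{n-1} + m_0\,\ols{m}'_{n-1},
\]
which produces the claimed expression $(c-1)\ols{m}'_{n-1} + \sum_{j=1}^{n-2} m_{j-1}\ols{m}'_{n-j}$.

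The only real obstacle is the bookkeeping at $j=1$: one must remember that ``removing $j$ from the block containing $1$'' in this degenerate situation really does leave two extra singletons (at $1$ and at $-1$), which is the source of the shift from $c$ to $c-1$ in the final identity. Everything else is a direct transcription of the cycle-count bijection into generating-function form.
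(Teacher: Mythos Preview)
Your proof is correct and follows essentially the same approach as the paper's own proof: split the sum over $S_{\mathit{II}}$ according to $j = \pi^{-1}(1)$, use the bijection with $T_{\mathit{II}}$ together with the cycle-count relation $\#(\pi) = 2\#(\pi_1) + \#(\pi_2)$ for $j \geq 2$ and the special case $\#(\pi) = \#(\pi_2) + 2$ for $j = 1$, then absorb one copy of $\ols{m}'_{n-1}$ into the sum using $m_0 = 1$. The reasoning, the handling of the degenerate case, and the final algebraic rewriting all match the paper.
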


\begin{proof}
Let $j = \pi^{-1}(1)$. We will break the proof into two parts; the first part is when $j = 1$ and the second part is when $2 \leq j \leq n -2$. As noted in Remark \ref{remark:T_II}, the largest $j$ can be is $n - 2$. 

When $j = 1$ then $\pi(1) = 1$. Thus $\pi_1$ is the empty partition with $0$ blocks, so $c^{\#(\pi_1)} = 1$. Thus $\#(\pi_2) = \#(\pi) -2$. Hence the contribution for $j = 1$ is $c \, \ols{m}'_{n-1}$. 

When $j \geq 2$ we have $\#(\pi) = 2 \#(\pi_1) + \#(\pi_2)$. Thus the contribution for $j \geq 2$ is $m_{j-1} \ols{m}'_{n-j}$. Summing we have, as $m_0 =1$,
\begin{align*}
\sum_{j=1}^{n-2} \mathop{\sum_{\pi \in S_{\mathit{II}}}}_{\pi^{-1}(1) = j} c^{\#(\pi)/2}
&=
c \ols{m}'_{n-1} + \sum_{j=2}^{n-2} m_{j-1} \ols{m}'_{n-j} \\
&=
(c - 1) \ols{m}'_{n-1} + \sum_{j=1}^{n-2} m_{j-1} \ols{m}'_{n-j}.
\end{align*}
\end{proof}

\begin{lemma}\label{lemma:s_III_term}
\[
\sum_{\pi \in S_\mathit{III}} c^{\#(\pi)/2} 
=
\sum_{j=1}^{n - 2} m_{j-1} \ols{m}'_{n - j}.
\]
\end{lemma}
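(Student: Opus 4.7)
The plan is to apply the bijection between $S_{\mathit{III}}$ and $T_{\mathit{III}} = \bigcup_{k=2}^{n} S_{NC}^\delta(k-1,-(k-1)) \times NC(n-k)$ established in the preceding theorem, track how the cycle count of $\pi$ decomposes through this bijection, and reindex the resulting sum. This parallels the proof of Lemma \ref{lemma:s_II_term} given immediately above.

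First, I would recall that for $\pi \in S_{\mathit{III}}$ with $k = \pi^{-1}(1)$, Remark \ref{remark:T_III} produces a pair $(\pi_1,\pi_2) \in S_{NC}^\delta(k-1,-(k-1)) \times NC(n-k)$, with $3 \leq k \leq n$ (the case $k=2$ contributes nothing since $S_{NC}^\delta(1,-1)$ is empty, and $k=1$ or $k=2$ leaves no room for a through cycle to meet $I_1$). Since $\pi_1$ is obtained by deleting $k$ from the cycle of $\pi|_{I_1 \cup -I_1}$ containing $1$ and $-k$ from the cycle containing $-1$, and removing a single point from an existing cycle does not alter the cycle count, $\#(\pi_1)$ equals the number of cycles of $\pi$ contained in $I_1 \cup (-I_1)$. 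Because $I_2$ meets no through cycle of $\pi$, every cycle of $\pi$ that meets $I_2$ lies entirely in $I_2$; and by the symmetry $\delta\pi\delta = \pi^{-1}$, the cycles of $\pi$ in $-I_2$ are in bijection with those in $I_2$ via $c \mapsto \delta c^{-1} \delta$. Thus $\#(\pi|_{I_2 \cup -I_2}) = 2\,\#(\pi_2)$, and altogether
\[
\#(\pi) \;=\; \#(\pi_1) + 2\,\#(\pi_2), \qquad c^{\#(\pi)/2} = c^{\#(\pi_1)/2}\, c^{\#(\pi_2)}.
\]

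Second, I would sum through the bijection to get
\[
\sum_{\pi \in S_{\mathit{III}}} c^{\#(\pi)/2} \;=\; \sum_{k=3}^{n} \ols{m}'_{k-1}\, m_{n-k},
\]
and then substitute $j = n - k + 1$, under which $k-1 = n-j$, $n-k = j-1$, and the range $k = 3,\dots,n$ corresponds to $j = n-2,\dots,1$. This rewrites the right-hand side as the claimed $\sum_{j=1}^{n-2} m_{j-1}\, \ols{m}'_{n-j}$. There is no substantive obstacle: all of the structural work has already been invested in the bijection from Section \ref{sec:the_subsets}, after which only the cycle-count decomposition $\#(\pi) = \#(\pi_1) + 2\,\#(\pi_2)$ and a routine reindexing remain.
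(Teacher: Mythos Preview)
Your proposal is correct and follows essentially the same approach as the paper: use the bijection of Remark~\ref{remark:T_III} (with $3 \le k \le n$), read off $\#(\pi) = \#(\pi_1) + 2\#(\pi_2)$ to get $\sum_{k=3}^n \ols{m}'_{k-1} m_{n-k}$, and then reindex. The paper's version is terser---it leaves the cycle-count identity implicit and performs the reindexing in one line---but the argument is the same.
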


\begin{proof}
Let $j = \pi^{-1}(1)$. By Remark \ref{remark:T_III} we have $3 \leq j \leq n$. Thus
\[
\sum_{\pi \in S_{\mathit{III}}} c^{\#(\pi)/2}
=
\sum_{j=3}^n \ols{m}'_{j-1} m_{n - j}
= \sum_{j=1}^{n-2} m_{j-1} \ols{m}'_{n - j}
\]
\end{proof}

\begin{theorem}\label{thm:c_recursion}
Let $\ols{m}'_n = \ds\sum_{\mathclap{\pi \in S_{NC}^\delta(n, -n)}} c^{\#(\pi)/2}$. Then $\ols{m}'_1 = 0$ and for $n \geq 2$ we have
\[
\ols{m}'_n = (n - 1) m_{n-1} + (c - 1) \ols{m}'_{n-1} + 2 \sum_{k=1}^{n -2}
m_{k-1} \ols{m}'_{n- k}.
\]
\end{theorem}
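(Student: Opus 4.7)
The proof proposal is immediate from the machinery already established in Sections \ref{sec:the_subsets} and earlier in Section \ref{sec:the_mp_recursion}. By Proposition \ref{prop:left_or_right} and Definition \ref{def:the_division}, we have the disjoint decomposition
\[
S_{NC}^\delta(n, -n) = S_I \sqcup S_{\mathit{II}} \sqcup S_{\mathit{III}},
\]
so $\ols{m}'_n$ splits as a sum of three pieces, and I would compute each piece separately.

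First I would handle the base case: since any $\pi \in S_{NC}^\delta(1,-1)$ would need a through cycle, the only candidate would be $(1,-1)$, but then $\pi\delta = \mathrm{id}$ is not a pairing of $[\pm 1]$, so $S_{NC}^\delta(1,-1) = \emptyset$ and $\ols{m}'_1 = 0$. Then, for $n \geq 2$, I would simply invoke Lemma \ref{lemma:s_I_term}, Lemma \ref{lemma:s_II_term}, and Lemma \ref{lemma:s_III_term} in turn, which give
\begin{align*}
\sum_{\pi \in S_I} c^{\#(\pi)/2} &= (n-1) m_{n-1}, \\
\sum_{\pi \in S_{\mathit{II}}} c^{\#(\pi)/2} &= (c-1)\ols{m}'_{n-1} + \sum_{k=1}^{n-2} m_{k-1}\ols{m}'_{n-k}, \\
\sum_{\pi \in S_{\mathit{III}}} c^{\#(\pi)/2} &= \sum_{k=1}^{n-2} m_{k-1}\ols{m}'_{n-k}.
\end{align*}
Adding the three identities yields
\[
\ols{m}'_n = (n-1)m_{n-1} + (c-1)\ols{m}'_{n-1} + 2\sum_{k=1}^{n-2} m_{k-1}\ols{m}'_{n-k},
\]
which is exactly the claim.

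There is no real obstacle at this stage; the entire content has already been extracted in the three cardinality lemmas (which rested on the bijections in Theorems \ref{thm:T_I_bijection} and its $\mathit{II}$ and $\mathit{III}$ counterparts and on the parity/cycle-count bookkeeping $\#(\pi) = 2\#(\pi_1) + \#(\pi_2)$ or $\#(\pi) = \#(\pi_2) + 2$ for the exceptional case $j=1$). The only thing to double-check while writing it up is the factor of $2$ in front of the sum: it arises because $S_{\mathit{II}}$ and $S_{\mathit{III}}$ both contribute a copy of $\sum_{k=1}^{n-2} m_{k-1} \ols{m}'_{n-k}$, reflecting the left/right symmetry of Proposition \ref{prop:left_or_right} (whether the through cycles attach to $I_1$ or to $I_2$). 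I would also remark that the convention $m_0 = 1$ together with the vanishing $\ols{m}'_1 = 0$ makes the boundary terms of the sum consistent, so no separate small-$n$ verification is needed beyond $n=2$.
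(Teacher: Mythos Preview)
Your proposal is correct and follows essentially the same approach as the paper's own proof: handle the base case $\ols{m}'_1 = 0$, split $S_{NC}^\delta(n,-n)$ into $S_I \sqcup S_{\mathit{II}} \sqcup S_{\mathit{III}}$, invoke Lemmas~\ref{lemma:s_I_term}, \ref{lemma:s_II_term}, \ref{lemma:s_III_term}, and add. Your added commentary on the factor of $2$ and the boundary conventions is accurate but not strictly needed.
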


\begin{proof}
For $n = 1$, $\ols{m}'_1 = 0$ because $S_{NC}^\delta(1, -1)$ is empty. For $n \geq 2$ we have
\begin{align*}
\sum_{\pi \in S_{NC}^\delta(n, -n)} c^{\#(\pi)/2}
&=
\sum_{\pi \in S_{\mathit{I}}} c^{\#(\pi)/2}
+
\sum_{\pi \in S_{\mathit{II}}} c^{\#(\pi)/2}
+
\sum_{\pi \in S_{\mathit{II}}} c^{\#(\pi)/2} \\
& =
(n - 1) m_{n-1} + (c - 1) \ols{m}'_{n-1} + 2 \sum_{k=1}^{n-2} m_{k-1} \ols{m}'_{n - k}.
\end{align*}
\end{proof}

Let $a = (1 - \sqrt{c})^2$, $b = (1 + \sqrt{c})^2$, and $\{m_n\}_{n=1}^\infty$ be the moments of the Marchenko-Pastur law with parameter $c$, (see equation \ref{eq:mp_moments}). Recall that the moment generating function $M(z) = 1 + \sum_{n=1}^\infty m_n z^n$ is given by
\[
M(z) = \frac{1 - (1 + c)z - \sqrt{(1 - a z)(1 - b z)}}{2 z}
\]

\begin{theorem}\label{thm:generating_function}
The generating function $\cm(z) = \sum_{n=2}^\infty \ols{m}'_n z^n$ is given by
\[
\cm(z) = \frac{ z^2 M'(z)}{1 + z (1 - c) - 2 z M(z)}.
\]
\end{theorem}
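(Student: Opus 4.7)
The plan is to translate the recursion from Theorem \ref{thm:c_recursion} into a functional equation for $\cm(z)$ by multiplying through by $z^n$ and summing over $n\geq 2$, then solve algebraically for $\cm(z)$.

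First I would handle each of the three terms on the right of the recursion separately. The first term yields
\[
\sum_{n=2}^\infty (n-1)\,m_{n-1}\,z^n \;=\; z^2\sum_{n=2}^\infty (n-1)\,m_{n-1}\,z^{n-2} \;=\; z^2 M'(z),
\]
exactly the same manipulation used in the proof of Theorem \ref{thm:special_moment_generating_function}. For the second term I would use the initial condition $\ols{m}'_1 = 0$, noted in Theorem \ref{thm:c_recursion}, so that
\[
(c-1)\sum_{n=2}^\infty \ols{m}'_{n-1}\,z^n \;=\; (c-1)z\sum_{n=1}^\infty \ols{m}'_{n}\,z^{n} \;=\; (c-1)z\,\cm(z).
\]

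The third term is a routine convolution calculation. I would substitute $j = n-k$, observe that $k\in[1,n-2]$ corresponds to $j\in[2,n-1]$, interchange the order of summation, and then reindex by $l = n-j-1$:
\begin{align*}
\sum_{n=2}^\infty\sum_{k=1}^{n-2} m_{k-1}\,\ols{m}'_{n-k}\,z^n
&= \sum_{j=2}^\infty \ols{m}'_{j}\sum_{n=j+1}^\infty m_{n-j-1}\,z^n\\
&= z\sum_{j=2}^\infty \ols{m}'_{j}\,z^{j}\sum_{l=0}^\infty m_l\,z^l \;=\; z\,M(z)\,\cm(z).
\end{align*}
Combining the three contributions gives the functional equation
\[
\cm(z) \;=\; z^2 M'(z) + (c-1)z\,\cm(z) + 2z\,M(z)\,\cm(z),
\]
and solving for $\cm(z)$ yields
\[
\cm(z) \;=\; \frac{z^2 M'(z)}{1-(c-1)z - 2zM(z)} \;=\; \frac{z^2 M'(z)}{1+(1-c)z - 2zM(z)},
\]
which is the stated formula.

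Since the recursion has already been established combinatorially and all three sums reduce to standard generating function manipulations, there is no genuine obstacle; the only care required is the bookkeeping at the lower limits to confirm that the $n=2$ term of the convolution sum (which is empty) and the vanishing of $\ols{m}'_1$ are consistent with the tidy expressions $zM(z)\cm(z)$ and $z\,\cm(z)$ above.
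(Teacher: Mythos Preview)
Your proposal is correct and follows essentially the same approach as the paper: both multiply the recursion of Theorem \ref{thm:c_recursion} by $z^n$, sum over $n\geq 2$, identify the three resulting sums as $z^2 M'(z)$, $(c-1)z\,\cm(z)$, and $2z\,M(z)\,\cm(z)$, and then solve the linear equation for $\cm(z)$. The only cosmetic difference is that you reindex the convolution via $j=n-k$ first, whereas the paper interchanges the order of summation over $k$ directly; the computations are otherwise identical.
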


\begin{remark}
When $c = 1$ we get the formula of Theorem \ref{thm:special_moment_generating_function}. 

\end{remark}

\begin{proof}
\begin{align*}
\cm(z) 
& =
\sum_{n = 2}^\infty
(n - 1) m_{n-1} z^n
+
(c -1) \sum_{n=2}^\infty \ols{m}'_{n-1} z^n  \\
& \qquad \mbox{}+ 
2 \sum_{n=2}^\infty
  \sum_{k=1}^{n-2} m_{k-1} \ols{m}'_{n-k} z^n
\end{align*}
and we shall do each sum separately. 

As in the proof of Theorem \ref{thm:special_moment_generating_function} we have $\sum_{n = 2}^\infty
(n - 1) m_{n-1} z^n = z^2 M'(z)$. Also $\sum_{n=2}^\infty \ols{m}'_{n-1} z^n = z \cm(z)$. Finally
\begin{multline*}
\sum_{n=2}^\infty
  \sum_{k=1}^{n-2} m_{k-1} \ols{m}'_{n-k} z^n
=
z \sum_{k=1}^\infty
  \sum_{n=k+2}^\infty m_{k-1} \ols{m}'_{n-k} z^{n-1}  \\
=
z \sum_{k=1}^\infty m_{k-1} z^{k-1}
\sum_{n=k+2}^\infty  \ols{m}'_{n-k} z^{n-k}
= z \cm(z) M(z). 
\end{multline*}
Thus
\[
\cm(z) = z^2 M'(z) + (c - 1) z \cm(z) + 2 z \cm(z) M(z).
\]
\end{proof}

Let $P(z) = (z - a)(z - b)$ and $G$ be the Cauchy transform  of the Marchenko-Pastur law with parameter $c$. Recall that
\[
G(z) = \frac{1}{z} + \sum_{n=1}^\infty \frac{m_n}{z^{n + 1}} =
\frac{z + 1 - c - \sqrt{P(z)}}{2 z}.
\]
Let $g(z) = \frac{1}{z} \cm\big( \frac{1}{z} \big)$ be the (infinitesimal) Cauchy transform of the moment sequence $\{ \ols{m}'_n\}_{n = 1}^\infty$, see e.g. \cite[\S 2]{m}. The next theorem shows that the second term of Equation (\ref{eq:two_terms}) gives the same distribution as in Dumitriu and Edelman \cite{dumitriu_edelman_2006}.
We choose the branch of $\sqrt{P(z)}$ as in \cite[Ex. 3.6]{m}.

\begin{theorem}\label{thm:second_cauchy_transform}
\begin{equation}\label{eq:infinitesimal_cauchy_transform}
g(z) 
= 
\frac{zG(z) -1}{P(z)}
= 
\frac{1}{2}
\Big\{\ 
\frac{1}{2}\Big\{ \frac{1}{z - a} + \frac{1}{z -b}\Big\}
-
\frac{1}{\sqrt{(z - a) (z - b)}} \ \Big\}. 
\end{equation}
\end{theorem}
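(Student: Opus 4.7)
The plan is to derive the compact identity $g(z) = -(zG(z))'/\sqrt{P(z)}$ from the generating-function equation of Theorem~\ref{thm:generating_function}, and then read off both stated forms by using the explicit Cauchy transform of the Marchenko--Pastur law.

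First, I would rewrite Theorem~\ref{thm:generating_function} as the functional equation
\[
\cm(z)\bigl[\,1 + (1-c)z - 2z M(z)\,\bigr] = z^2 M'(z),
\]
and substitute $z \mapsto 1/z$. Using the relations $\cm(1/z) = z\, g(z)$ and $M(1/z) = z\, G(z)$ (which follow directly from the definitions of $g$ and $G$ as Cauchy transforms), the bracket becomes $\bigl[z + (1-c) - 2 z G(z)\bigr]/z$. The Marchenko--Pastur identity $2zG(z) = z + 1 - c - \sqrt{P(z)}$ collapses this bracket to $\sqrt{P(z)}/z$, so that the equation simplifies to
\[
g(z)\,\sqrt{P(z)} \;=\; \frac{M'(1/z)}{z^{3}}.
\]

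Second, I would eliminate $M'(1/z)$. Differentiating the identity $M(1/z) = zG(z)$ with respect to $z$ gives $-z^{-2} M'(1/z) = (zG(z))'$, whence $M'(1/z) = -z^{2} (zG(z))'$. Substituting back produces the key compact formula
\[
g(z) \;=\; -\frac{(zG(z))'}{\sqrt{P(z)}}.
\]

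Third, I would convert this into the two forms stated in the theorem. Differentiating $2zG(z) = z + 1 - c - \sqrt{P(z)}$ yields $(zG(z))' = \tfrac{1}{2} - P'(z)/(4\sqrt{P(z)})$, so
\[
g(z) \;=\; -\frac{1}{2\sqrt{P(z)}} + \frac{P'(z)}{4 P(z)}.
\]
Since $P(z) = (z-a)(z-b)$, the logarithmic derivative gives $P'(z)/P(z) = 1/(z-a) + 1/(z-b)$, producing the right-hand expression of the theorem. For the middle expression, note that $a+b = 2(1+c)$ forces $P'(z) = 2(z - 1 - c)$, so combining the two fractions over the common denominator $2P(z)$ yields
\[
g(z) \;=\; \frac{(z - 1 - c) - \sqrt{P(z)}}{2 P(z)},
\]
and the identity $2(zG(z) - 1) = z - 1 - c - \sqrt{P(z)}$ (again from the explicit form of $G$) identifies the numerator with $2(zG(z)-1)$, giving $g(z) = (zG(z)-1)/P(z)$.

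The only real obstacle is bookkeeping: keeping the substitutions $M \leftrightarrow G$ and $\cm \leftrightarrow g$ straight, and ensuring the branch of $\sqrt{P(z)}$ chosen for $G$ (as in \cite[Ex.~3.6]{m}) is used consistently throughout so that the cancellation $z + (1-c) - 2zG(z) = \sqrt{P(z)}$ holds with the correct sign.
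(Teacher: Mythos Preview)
Your argument is correct and follows essentially the same route as the paper's proof: substitute $z\mapsto 1/z$ in Theorem~\ref{thm:generating_function}, use $z+(1-c)-2zG(z)=\sqrt{P(z)}$ to collapse the bracket, and differentiate $M(1/z)=zG(z)$ to eliminate $M'(1/z)$; the paper packages the last step as the identity $G(z)+zG'(z)=(1-zG(z))/\sqrt{P(z)}$, while you instead differentiate the explicit formula for $2zG(z)$ and do partial fractions, which amounts to the same computation. One bookkeeping slip: the intermediate display should read $g(z)\sqrt{P(z)}=M'(1/z)/z^{2}$, not $/z^{3}$ --- with $z^{2}$ your substitution $M'(1/z)=-z^{2}(zG(z))'$ then gives $g(z)=-(zG(z))'/\sqrt{P(z)}$ exactly as you claim.
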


\begin{proof}
Since $M(z^{-1}) = z G(z)$ we have $-z^{-2} M'(z^{-1}) = G(z) + z G'(z)$. It is routine to check that $G(z) + z G'(z) = \ds\frac{1 - z G(z)}{\sqrt{P(z)}}$. Hence
\begin{align*}
g(z) = \frac{1}{z} \cm\Big( \frac{1}{z} \Big)
=&\ 
\frac{1}{z} \frac{ z^{-2} M'(z^{-1})}{1 + z^{-1}(1 - c) - 2 z^{-1} M(z^{-1})} \\
& =
\frac{z^{-2} M'(z^{-1})}{z + (1 - c) - 2 z G(z)}
=
\frac{z^{-2} M'(z^{-1})}{\sqrt{P(z)}} \\
=
\frac{zG(z) - 1}{P(z)}
&=
\frac{1}{2}
\Big\{\ 
\frac{1}{2}\Big\{ \frac{1}{z - a} + \frac{1}{z -b}\Big\}
-
\frac{1}{\sqrt{(z - a) (z - b)}} \ \Big\}
\end{align*}
\end{proof}

\begin{remark}
If we let $\nu_1$ be the signed measure:
\begin{equation}
d\nu_1(x) = -c'
\begin{cases}
\phantom{\frac{1}{2}}\delta_0 -  \frac{x + 1 -c}
{2 \strut\pi x \sqrt{(b - x)(x - a)}}\,dx & c < 1 \\
\frac{1}{2} \delta_0 -  \frac{1}{2\pi\sqrt{x(4 - x)}}\,dx & c = 1 \\
\phantom{\frac{1}{2} \delta_0}
-\frac{x + 1 -c}
{2 \strut\pi x \sqrt{(b - x)(x - a)}}\,dx  & c > 1 \\
\end{cases}
\end{equation}
where $a = (1 -  \sqrt c)^2$ and $b = (1 + \sqrt c)^2$ and 
\begin{equation}\label{eq:infinitesimal_law}
d\nu_2(t) 
=
\frac{1}{2}\bigg( \frac{1}{2}(\delta_a + \delta_b) - \frac{1}{\pi \sqrt{(b-t)(t - a)}}\bigg),
\end{equation}
and $\mu' = \nu_1 + \nu_2$ then $m'_n = \int t^n \,d\mu'(t)$. Combining the conclusion of Theorem \ref{thm:second_cauchy_transform} with \cite[Thm. 31]{m} we have the infinitesimal Cauchy transform for $\{ m'_n\}_n$ is 
\begin{align*}\lefteqn{
g(z) = \sum_{n=0}^\infty \frac{m'_n}{z^{n+1}} } \\
& =
\frac{-c'}{z \sqrt{P(z)}}
\frac{(1 - c)^2 - (1 + c)z - (1 - c)\sqrt{P(z)}}{\sqrt{P(z)} + z -1 + c} \\
& \qquad\mbox{} + 
\frac{1}{2}
\Big\{\ 
\frac{1}{2}\Big\{ \frac{1}{z - a} + \frac{1}{z -b}\Big\}
-
\frac{1}{\sqrt{(z - a) (z - b)}} \ \Big\}.
\end{align*}

\section{infinitesimal  \textsc{{r}}-transform}
\label{sec:infinitesimal_r-transform}

In Arizmendi, Garza-Vargas, and Perales \cite[Example 5.8]{agp} gave the infinitesimal $R$-transform of negative of the measure in Equation (\ref{eq:infinitesimal_law}). Let us recall that given two moment sequences $\{ m_n \}_{n \geq 1}$ and $\{ m'_n \}_{n \geq 1}$ the infinitesimal cumulants defined by F\'evrier and Nica \cite[\S 1.2]{fn} are given by formally differentiating the moment cumulant relation
\begin{equation}\label{eq:moment_cumulant}
m_n = \sum_{\pi \in NC(n)} \kappa_\pi
\quad \mathrm{\ to\ get\ }\quad
m'_n = \sum_{\pi \in NC(n)} \partial \kappa_\pi
\end{equation}
where $\partial\kappa_\pi$ is computed using the Leibnitz rule:
\[
\partial\kappa_\pi = \sum_{\pi \in NC(n)} \sum_{V \in \pi}
\kappa'_{|V|} \mathop{\prod_{W \in \pi}}_{W \not = V} \kappa_{|V|}.
\]
There is an equivalent operator valued version of this relation as follows. Let $M_n = \begin{bmatrix} m_n & m'_n \\ 0 & m_n \end{bmatrix}$ and  $K_n = \begin{bmatrix} \kappa_n & \kappa'_n \\ 0 & \kappa_n \end{bmatrix}$. Then Equation (\ref{eq:moment_cumulant}) becomes
$\ds
M_n = \sum_{\pi \in NC(n)} K_\pi$
where
$\ds K_\pi = \prod_{V \in \pi} K_{|V|}.
$
Now let $Z = \begin{bmatrix} z & w \\ 0 & z \end{bmatrix}$, $G(Z)  = \sum_{n=0}^\infty M_n Z^{-(n+1)}$ and $R(Z) = \sum_{n=1}^\infty K_n Z^{n-1}$. Expanding out the series we have 
\[
G(Z) = \begin{bmatrix} G(z) & w G'(z) + g(z) \\ 0 & G(z) \end{bmatrix} 
\mathrm{\ and \ }
R(Z) = \begin{bmatrix} R(z) & w R'(z) + r(z) \\ 0 & R(z) \end{bmatrix}. 
\]
Then, as all these $2 \times 2 $ matrices commute, Equation (\ref{eq:moment_cumulant}) implies the Voiculescu relation 
\[
Z^{-1} + R(G(Z)) = Z
\]
which then implies that $r(z) = -g(K(z)) K'(z)$ where $K = G^{\langle -1 \rangle}$ denotes the compositional inverse of $G$, $K'$ is the derivative of $K$ with respect to $z$,  $g(z) = \sum_{n=1}^\infty m'_n z^{-(n +1)}$, and $
r(z) = \sum_{n=1}^\infty \kappa'_n z^{n-1}$, is the infinitesimal $R$-transform.
See Theorem 2 from \cite{m}. 

Here we will briefly show that one can also do a direct computation of  $r$ using $r(z) = -g(K(z)) K'(z)$. 

We shall let $\ds
G(z) = \frac{z + 1 - c \sqrt{(z - a)(z - b)}}{2z}$ be the Cauchy transform of the Marchenko-Pastur law, $g$ be the Cauchy transform in Equation (\ref{eq:infinitesimal_cauchy_transform}). Then, one lets $c_1 = (1 - \sqrt{c})^{-1}$, $c_2 = (1 + \sqrt c)^{-1}$, and then checks the following steps: 
\begin{itemize}
\item
$\ds
K(z) = \frac{1}{z} + \frac{c}{1 - z}
$

\item
$\ds
\sqrt{P(K(z))} = K(z) + 1 - c - 2 z K(z)
= (c-1) \frac{(z - c_1)(z - c_2)}{z(z - 1)}$

\item
$\ds
K'(z) = (c-1) \frac{(z - c_1)(z - c_2)}{z^2(z - 1)^2}
$

\item
$\ds
g(K(z)) K'(z) = (c -1)^{-1} \frac{c z}{1 - z} \ \frac{1}{(z - c_1)(z - c_2)}$. 
\end{itemize}
Upon doing a partial fraction expansion we then get
\[
r(z) = \frac{1}{2} \Big\{ \frac{1}{c_1 - z} + \frac{1}{c_2 - z}  - \frac{2}{1 - z}\big\}\ \mathrm{which\ gives}\ 
\kappa'_n = \frac{1}{2}\{ c_1^{-n} + c_2^{-n} - 2\}.
\]
This produces the following table.

\begin{center}
$
\begin{array}{c|cl}
n  & \kappa_n & \kappa'_n \\ \hline
 1 & c & 0 \\
 2 & c & c \\
 3 & c & 3 c \\
 4 & c & c^2+6 c \\
 5 & c & 5 c^2+10 c \\
 6 & c & c^3+15 c^2+15 c \\
 7 & c & 7 c^3+35 c^2+21 c \\
 8 & c & c^4+28 c^3+70 c^2+28 c \\
\end{array}
$\end{center}
This enables to come full circle and check agreement with Figure \ref{table:the_first_six_moments}. For example Equation (\ref{eq:moment_cumulant}) gives us that $m_3 = \kappa_3 + 3 \kappa_1 \kappa_2 + \kappa_1^3$ and $m'_3 = \kappa'_3 + 3 \kappa'_1 \kappa_2 + 3 \kappa_1 \kappa_2 + 3 \kappa_1^2 \kappa'_1$. Using that $\kappa_n = c$ for all $n$ and $\kappa'_1 = 0$ we have $m'_3 = \kappa'_3 + 2 \kappa_1 \kappa'_2 = 3 c + 3 c^3$ as claimed in Figure \ref{table:the_first_six_moments}.

\section{Infinitesimal Distributions of\\ Some Orthogonal Polynomials}
\label{sec:orthogonal_polynomials}

Given one of the classical ensembles of orthogonal polynomials (see e.g. \cite[\S 7.6]{sz}), we can create a sequence of probability measures by putting a mass of $1/n$ at each of the $n$ zeros of $p_n$, the polynomial in the ensemble of degree $n$. In the case of the Hermite and Laguerre polynomials the corresponding measures converge to the semi-circle law and the Marchenko-Pastur law respectively. Moreover the convergence is such that there are infinitesimal laws as well. In the case of the Hermite polynomials we get the negative of the infinitesimal law of the GOE: $\mu' = \nu_1 - \nu_2$ where $\nu_1 = \frac{1}{2}( \delta_{-2} + \delta_{2})$ is the Bernoulli law and $d\nu_2(t) = $ on the interval $[-2, 2]$ is the arcsine law, see Arizmendi, Garza-Vargas, and Perales in \cite[Example 5.6]{agp}. In the case of the Laguerre polynomials it was shown by Arizmendi, Garza-Vargas, and Perales in \cite{agp} that one gets the negative of the measure in Equation (\ref{eq:infinitesimal_law}).  Indeed, from \cite[Example 5.8]{agp} we see that the negative of Cauchy transform of the infinitesimal  distribution of the zeros of the Laguerre polynomials is
\begin{equation}
g_{{\textsc{\tiny mp}}}(z) = \frac{zG(z) - 1}{P(z)}
=
\frac{1}{2}
\Big\{\ 
\frac{1}{2}\Big\{ \frac{1}{z - a} + \frac{1}{z -b}\Big\}
-
\frac{1}{\sqrt{(z - a) (z - b)}} \ \Big\}.
\end{equation}
From \cite[Remark 28]{m} we see that the  Cauchy transform of the infinitesimal measure obtained from the GOE is 
\[
g_{{\textsc{\tiny sc}}}(z) = \frac{1}{2} \Big\{ \frac{1}{2} \Big( \frac{1}{z - 2} + \frac{1}{z + 2}\Big) - \frac{1}{ \sqrt{z^2 - 4}}\Big\}. 
\]
If we let $w = ( z - (1 + c))/\sqrt{c}$, then we have $g_{{\textsc{\tiny mp}}}(z) = g_{{\textsc{\tiny sc}}}(w)/\sqrt{c}$. Thus up to a change of variable and rescaling these distributions are the same.
\end{remark}

\thebottomline
\end{document}